\documentclass[review,onefignum,onetabnum]{siamart171218}

\textwidth = 6.2 in
\textheight = 8.5 in
\oddsidemargin = 0.0 in
\evensidemargin = 0.0 in
\topmargin = 0.0 in
\headheight = 0.0 in
\headsep = 0.3 in
\parskip = 0.05 in
\parindent = 0.3 in

\usepackage{enumerate}
\usepackage{amsmath}
\usepackage{mathrsfs}
\usepackage{xcolor}	
\usepackage{amssymb}
\usepackage{bm}
\usepackage{graphicx}
\usepackage{subcaption}
\usepackage{adjustbox}
\usepackage{soul}
\usepackage{listings}
\usepackage{array}  

\usepackage{amssymb,latexsym}
\usepackage{multirow}

\usepackage{color,epstopdf}
\usepackage{enumitem}
\usepackage{setspace}
\usepackage{url}
\usepackage{scalerel}
\usepackage[ruled,vlined,algo2e,linesnumbered]{algorithm2e}

\usepackage{footnote}
\makesavenoteenv{algorithm}
%%%%Geoff Color Commands
\usepackage{mdframed}
\newmdenv[linecolor=green!50!black, fontcolor=green!50!black, backgroundcolor=green!20, linewidth=2pt, roundcorner=10pt]{gnote}

%%Color notes by James
\definecolor{amber}{rgb}{1.0, 0.49, 0.0}
\definecolor{darkpastelgreen}{rgb}{0.01, 0.75, 0.24}
\definecolor{darkred}{rgb}{0.64, 0.0, 0.0}
\definecolor{amberhl}{rgb}{1.0, 0.75, 0.0}

%%Color notes by Aly

\newmdenv[linecolor=blue!50!black, fontcolor=blue!50!black, backgroundcolor=blue!20, linewidth=2pt, roundcorner=10pt]{anote}

%Define Theorem Types
%\newtheorem{theorem}{Theorem}[section]
%\newtheorem{corollary}{Corollary}[theorem]

%\newtheorem{lemma}[theorem]{Lemma}
%\newtheorem{definition}[theorem]{Definition}

%\DeclareCaptionFormat{listing}
%                     {\colorbox{gray}
%                       {\parbox{\dimexpr\textwidth-2\fboxsep}{\centering #1#2#3}}}
                     
%% math-mode version of "l" column type
\newcolumntype{L}{>{$}l<{$}}

%\newcommand{\cL}{\mathcal{L}}

% Sets running headers as well as PDF title and authors
\headers{NPASA - Local Convergence}{J. Diffenderfer, W. W. Hager}

\title{NPASA: An algorithm for nonlinear programming - Local Convergence\thanks{Submitted to the editors \textcolor{red}{ADD DATE}.\funding{The authors gratefully acknowledge support by the National Science Foundation under Grant 1819002, and by the Office of Naval Research under Grants N00014-15-1-2048 and N00014-18-1-2100. This work was performed under the auspices of the U.S. Department of Energy by Lawrence Livermore National Laboratory  under  Contract  DE-AC52-07NA27344, LLNL-JRNL-824570-DRAFT.}}}
\author{James Diffenderfer\thanks{Lawrence Livermore National Laboratory,
    Livermore, CA (\email{diffenderfer2@llnl.gov})}
  \and William W. Hager\thanks{The University of Florida,
    Gainesville, FL (\email{hager@ufl.edu}, \url{http://people.clas.ufl.edu/hager/})}}

\ifpdf
\hypersetup{
  pdftitle={NPASA: An algorithm for nonlinear programming - Local Convergence},
  pdfauthor={J. Diffenderfer, W. W. Hager}
}
\fi

%DIF PREAMBLE EXTENSION ADDED BY LATEXDIFF
%DIF UNDERLINE PREAMBLE %DIF PREAMBLE
\RequirePackage[normalem]{ulem} %DIF PREAMBLE
\RequirePackage{color}\definecolor{RED}{rgb}{1,0,0}\definecolor{BLUE}{rgb}{0,0,1} %DIF PREAMBLE
 %DIF PREAMBLE
                      %DIF PREAMBLE
%DIF SAFE PREAMBLE %DIF PREAMBLE
 %DIF PREAMBLE
 %DIF PREAMBLE
 %DIF PREAMBLE
 %DIF PREAMBLE
%DIF FLOATSAFE PREAMBLE %DIF PREAMBLE
 %DIF PREAMBLE
 %DIF PREAMBLE
 %DIF PREAMBLE
 %DIF PREAMBLE
 %DIF PREAMBLE
 %DIF PREAMBLE
%DIF END PREAMBLE EXTENSION ADDED BY LATEXDIFF

\begin{document}

\maketitle

% REQUIRED
\begin{abstract}
In this paper, we provide local convergence analysis for the two phase Nonlinear Polyhedral Active Set Algorithm (NPASA) designed to solve nonlinear programs. In particular, we establish local quadratic convergence of the primal iterates and global error estimator for NPASA under reasonable assumptions. Additionally, under the same set of assumptions we prove that only phase two of NPASA is executed after finitely many iterations. This paper is companion to a paper that provides motivation and global convergence analysis for NPASA \cite{diffenderfer2020global}.
\end{abstract}

% REQUIRED
\begin{keywords}
 nonlinear programming, global convergence, local convergence
\end{keywords}

% REQUIRED
\begin{AMS}
 90C30, 65K05 %(90C30: Operations research, mathematical programming; Nonlinear programming. 65K05: Numerical Analysis; Mathematical programming methods)
\end{AMS}

% Body
\section {Introduction}
\label{section:introduction}
In this paper, we establish local convergence properties for the Nonlinear Polyhedral Active Set Algorithm (NPASA) designed to solve nonlinear programming problems that was presented in a companion paper \cite{diffenderfer2020global}. In the analysis of NPASA, we will commonly refer to a general nonlinear program in the form
\begin{align}
\begin{array}{cc}
\displaystyle \min_{\bm{x} \in \mathbb{R}^{n}} & f(\bm{x}) \\
\text{s.t.} & \bm{h}(\bm{x}) = \bm{0}, \ \bm{r} (\bm{x}) \leq \bm{0}
\end{array} \label{prob:main-nlp}
\end{align}
where $f : \mathbb{R}^n \to \mathbb{R}$ and $\bm{h} : \mathbb{R}^n \to \mathbb{R}^{\ell}$ are nonlinear functions and $\bm{r} : \mathbb{R}^n \to \mathbb{R}^m$ is a linear function defined by
\begin{align}
\bm{r} (\bm{x}) := \bm{A} \bm{x} - \bm{b}, \label{def:r(x)}
\end{align}
where $\bm{A} \in \mathbb{R}^{m \times n}$ and $\bm{b} \in \mathbb{R}^m$. Additionally, note that we will often write $\Omega$ to denote the polyhedral constraint set given by $\Omega = \{ \bm{x} \in \mathbb{R}^n : \bm{r} (\bm{x}) \leq \bm{0} \}$ 

\iffalse
As established in \cite{diffenderfer2020global}, NPASA is split into two phases with branching criterion for switching between the phases. The first phase makes use of augmented Lagrangian techniques through use of the Global Step (GS) algorithm, Algorithm~\ref{alg:gs}, to ensure global convergence under few assumptions. The use of augmented Lagrangian techniques to ensure global convergence has been successful in several implementations of solvers for nonlinear programs \cite{Andreani2008, Conn1991, Conn1996, Gill2005, snopt77}. The second phase uses active set techniques by using the %Constraint and Multiplier Step (CMS) algorithm,
Local Step (LS) Algorithm~\ref{alg:cms}, to achieve fast local convergence. This approach differs from many of the popular techniques for achieving fast local convergence in iterative solvers for nonlinear programs \cite{NandW} like sequential quadratic programming \cite{Byrd2006, Fletcher1998, snopt77, GilSWdnopt} and interior point methods \cite{Wachter2005L, Wachter2006}. 
\fi

The focus of this paper is solely on establishing local convergence properties of NPASA. Global convergence properties of NPASA were established in a companion paper \cite{diffenderfer2020global}. Additionally, as thorough motivation for NPASA was presented in the companion paper, we omit a typical motivation and discussion of NPASA from this paper and refer the reader to Sections~1 and 3 from \cite{diffenderfer2020global} for these details. In order to facilitate the local convergence analysis and to maintain a sense of completeness in this paper, key details from \cite{diffenderfer2020global}, such as pseudocode and essential theorems, are included in Sections~\ref{sec:Error-Estimators} and \ref{section:npasa} of this paper. The paper is organized as follows. In Section~\ref{sec:Error-Estimators}, we briefly outline the error estimators used by NPASA for solving problem (\ref{prob:main-nlp}). In Section~\ref{section:npasa}, we provide pseudocode for the NPASA algorithm. %Sections~\ref{section:npasa-phase-one} and
Section~\ref{section:npasa-phase-two} focuses on establishing a local convergence result for phase two of NPASA and in Section~\ref{section:npasa-global-convergence} we prove that the primal iterates and global error estimator for NPASA are locally quadratically convergent. We now provide notation used throughout the paper. 

\subsection{Notation} \label{subsec:notation}
We will write $\mathbb{R}_+$ to denote the set $\{ x \in \mathbb{R} : x \geq 0 \}$. Scalar variables are denoted by italicized characters, such as $x$, while vector variables are denoted by boldface characters, such as $\bm{x}$. For a vector $\bm{x}$ we write $x_j$ to denote the $j$th component of $\bm{x}$ and we write $\bm{x}^{\intercal}$ to denote the transpose of $\bm{x}$. Outer iterations of NPASA will be enumerated by boldface characters with subscript $k$, such as $\bm{x}_k$, and we will write $x_{kj}$ to denote the $j$th component of the iterate $\bm{x}_k$. Inner iterations of NPASA will be enumerated by boldface characters with subscript $i$. If $\mathcal{S}$ is a subset of indices of $\bm{x}$ then we write $\bm{x}_{\mathcal{S}}$ to denote the subvector of $\bm{x}$ with indices $\mathcal{S}$. Additionally, for a matrix $\bm{M}$ we write $\bm{M}_{\mathcal{S}}$ to denote the submatrix of $\bm{M}$ with row indices $\mathcal{S}$. The ball with center $\bm{x}$ and radius $r$ will be denoted by $\mathcal{B}_r (\bm{x})$. Real valued functions are denoted by italicized characters, such as $f(\cdot)$, while vector valued functions are denoted by boldface characters, such as $\bm{h}(\cdot)$. The gradient of a real valued function $f(\bm{x})$ is denoted by $\nabla f (\bm{x})$ and is a row vector. The Jacobian matrix of a vector valued function $\bm{h} : \mathbb{R}^n \to \mathbb{R}^{\ell}$ is denoted by $\nabla \bm{h} \in \mathbb{R}^{\ell \times n}$. Given a vector $\bm{x}$, we will write $\mathcal{A} (\bm{x})$ denote the set of active constraints at $\bm{x}$, that is $\mathcal{A} (\bm{x}) := \{ i \in \mathbb{N} : \bm{a}_i^{\intercal} \bm{x} = 0 \}$ where $\bm{a}_i^{\intercal}$ is the $i$th row of matrix $\bm{A}$. For an integer $j$, we will write $f \in \mathcal{C}^j$ to denote that the function $f$ is $j$ times continuously differentiable. We write $c$ to denote a generic nonnegative constant that takes on different values in different inequalities. Given an interval $[a,b] \subset \mathbb{R}$, we write $Proj_{[a,b]} (\bm{x})$ to denote the euclidean projection of each component of the vector $\bm{x}$ onto the interval $[a,b]$. That is, if $\bm{v} = Proj_{[a,b]} (\bm{x})$ then the $j$th component of $\bm{v}$ is given by
\begin{align}
v_j = \left\{
	\begin{array}{lll}
	a & : & x_j \leq a \\
	x_j & : & a < x_j < b \\
    b & : & x_j \geq b
	\end{array}
	\right. .
\end{align}

In order to simplify the statement of several results throughout the paper, we provide abbreviations for several assumptions that are used in the convergence analysis. Note that each assumption will be clearly referenced when required.
\begin{enumerate}[leftmargin=2\parindent,align=left,labelwidth=\parindent,labelsep=7pt]
%\item[(\textbf{LI})] Linear Independence: Given $\bm{x}$, the matrix {\footnotesize $\displaystyle \begin{bmatrix} \nabla \bm{h} (\bm{x}) \\ \bm{A} \end{bmatrix}$} is of full row rank.
\item[(\textbf{LICQ})] Linear Independence Constraint Qualification: Given $\bm{x}$, {\footnotesize $ \begin{bmatrix} \nabla \bm{h} (\bm{x}) \\ \bm{A}_{\mathcal{A}(\bm{x})} \end{bmatrix}$} has full row rank.
\item[(\textbf{SCS})] Strict complementary slackness holds at a minimizer of problem (\ref{prob:main-nlp}). That is, Definition~\ref{def:scs} holds at a minimizer of problem (\ref{prob:main-nlp}).
\item[(\textbf{SOSC})] The second-order sufficient optimality conditions hold for a feasible point in problem (\ref{prob:main-nlp}). That is, the hypotheses of Theorem~\ref{thm:sosc} hold.
\item[(\textbf{SSOSC})] The strong second-order sufficient optimality conditions hold for a feasible point in problem (\ref{prob:main-nlp}). That is, the hypotheses of Theorem~\ref{thm:ssosc} hold.
\end{enumerate}
\section{Error Estimators for Nonlinear Optimization} \label{sec:Error-Estimators}
We now briefly recall the definitions of error estimators for problem (\ref{prob:main-nlp}) and any useful results relating to these error estimators. For more discussion and overview of these error estimators or proofs of these results please refer to Section~2 of \cite{diffenderfer2020global}. Both error estimators make use of the Lagrangian function $\mathcal{L}: \mathbb{R}^n \times \mathbb{R}^{\ell} \times \mathbb{R}^m \to \mathbb{R}$ for problem (\ref{prob:main-nlp}) defined by
\begin{align}
\mathcal{L} (\bm{x}, \bm{\lambda}, \bm{\mu}) = f(\bm{x}) + \bm{\lambda}^{\intercal} \bm{h}(\bm{x}) + \bm{\mu}^{\intercal} \bm{r}(\bm{x}).
\end{align}
The first error estimator is defined over the set $\mathcal{D}_0 := \{ (\bm{x}, \bm{\lambda}, \bm{\mu}) : \bm{x} \in \Omega, \bm{\lambda} \in \mathbb{R}^{\ell}, \bm{\mu} \geq \bm{0} \}$ and denoted $E_0 : \mathcal{D}_0 \to \mathbb{R}$ by 
\begin{align}
E_0 (\bm{x}, \bm{\lambda}, \bm{\mu}) = \sqrt{ \|\nabla_x \mathcal{L}(\bm{x}, \bm{\lambda}, \bm{\mu})\|^2 + \|\bm{h}(\bm{x})\|^2 - \bm{\mu}^{\intercal} \bm{r}(\bm{x})}. \label{def:E0}
\end{align}
The second estimator, denoted by $E_1$, makes use of the componentwise minimum function, denoted here by $\bm{\Phi} : \mathbb{R}^m \times \mathbb{R}^m \to \mathbb{R}^m$, so that the $i$th component is defined by
\begin{align}
\Phi_i \left(\bm{x}, \bm{y} \right) = \min\{ x_i, y_i\},
\end{align}
for $1 \leq i \leq m$. Then $E_1: \mathbb{R}^n \times \mathbb{R}^{\ell} \times \mathbb{R}^m \to \mathbb{R}$ is defined by 
\begin{align}
E_1 (\bm{x}, \bm{\lambda}, \bm{\mu}) = \sqrt{ \|\nabla_x \mathcal{L}(\bm{x}, \bm{\lambda}, \bm{\mu})\|^2 + \|\bm{h}(\bm{x})\|^2 + \|\bm{\Phi}\left(-\bm{r}(\bm{x}), \bm{\mu} \right)\|^2}. \label{def:E1}
\end{align}
The error bound properties for $E_1$ and $E_0$ are provided in Theorem~\ref{thm:E1-error-bound} and Corollary~\ref{cor:E0-error-bound}, respectively.

We define the \emph{multiplier} portion of the error estimators $E_0$ and $E_1$ by
\begin{align}
E_{m,0} (\bm{x}, \bm{\lambda}, \bm{\mu}) := \|\nabla_x \mathcal{L}(\bm{x}, \bm{\lambda}, \bm{\mu})\|^2 - \bm{\mu}^{\intercal} \bm{r}(\bm{x}) \label{def:Em0}
\end{align}
and
\begin{align}
E_{m,1} (\bm{x}, \bm{\lambda}, \bm{\mu}) := \|\nabla_x \mathcal{L}(\bm{x}, \bm{\lambda}, \bm{\mu})\|^2 + \|\bm{\Phi}\left(-\bm{r}(\bm{x}), \bm{\mu} \right)\|^2, \label{def:Em1}
\end{align}
respectively, and the \emph{constraint} portion of the error estimators by
\begin{align}
E_c (\bm{x}) := \| \bm{h} (\bm{x}) \|^2. \label{def:Ec}
\end{align}
Then from the definitions of $E_0$ and $E_1$ in (\ref{def:E0}) and (\ref{def:E1}) it follows that
\begin{align}
E_j (\bm{x}, \bm{\lambda}, \bm{\mu})^{2} = E_{m,j} (\bm{x}, \bm{\lambda}, \bm{\mu}) + E_c (\bm{x}), \label{eq:E=Em+Ec}
\end{align}
for $j \in \{0, 1\}$. 

We now state results pertaining to $E_0$ and $E_1$ that are required in the local convergence analysis of NPASA. Additional discussion of these results can be found in \cite{diffenderfer2020global}.
\begin{lemma} \label{lem:E0-E1-relationship}
Suppose $(\bm{x}, \bm{\lambda}, \bm{\mu}) \in \mathcal{D}_0 := \{ (\bm{x}, \bm{\lambda}, \bm{\mu}) : \bm{x} \in \Omega, \bm{\lambda} \in \mathbb{R}^{\ell}, \bm{\mu} \geq \bm{0} \}$. Then 
\begin{align}
E_{m,1} (\bm{x}, \bm{\lambda}, \bm{\mu}) \leq E_{m,0} (\bm{x}, \bm{\lambda}, \bm{\mu}) 
\qquad \text{and} \qquad
E_1 (\bm{x}, \bm{\lambda}, \bm{\mu}) \leq E_0 (\bm{x}, \bm{\lambda}, \bm{\mu}). \label{result:E0-E1-relationship}
\end{align}
\end{lemma}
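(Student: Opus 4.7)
The plan is to reduce the vector inequality to a componentwise scalar inequality and then derive the second claim from the additive decomposition (\ref{eq:E=Em+Ec}). I would start by canceling the $\|\nabla_x \mathcal{L}(\bm{x},\bm{\lambda},\bm{\mu})\|^2$ terms appearing in both $E_{m,0}$ and $E_{m,1}$, so that the first inequality in (\ref{result:E0-E1-relationship}) becomes equivalent to
\[
\|\bm{\Phi}(-\bm{r}(\bm{x}),\bm{\mu})\|^2 \;\leq\; -\bm{\mu}^{\intercal}\bm{r}(\bm{x}).
\]
Because $(\bm{x},\bm{\lambda},\bm{\mu})\in\mathcal{D}_0$, we have $-\bm{r}(\bm{x})\geq \bm{0}$ and $\bm{\mu}\geq\bm{0}$, so expanding both sides as sums over $i=1,\ldots,m$ reduces the claim to showing
\[
\sum_{i=1}^m \bigl(\min\{-r_i(\bm{x}),\mu_i\}\bigr)^2 \;\leq\; \sum_{i=1}^m \mu_i\bigl(-r_i(\bm{x})\bigr),
\]
a sum of nonnegative scalar inequalities.

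The key elementary fact I would invoke is that for any $a,b\geq 0$,
\[
\min\{a,b\}^2 \;\leq\; \min\{a,b\}\cdot\max\{a,b\} \;=\; ab.
\]
Applying this with $a = -r_i(\bm{x})$ and $b = \mu_i$ for each $i$ and summing yields the desired bound, which establishes $E_{m,1}(\bm{x},\bm{\lambda},\bm{\mu}) \leq E_{m,0}(\bm{x},\bm{\lambda},\bm{\mu})$.

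With the multiplier portion inequality in hand, the second inequality follows directly from the identity (\ref{eq:E=Em+Ec}): since $E_c(\bm{x}) = \|\bm{h}(\bm{x})\|^2$ appears identically in both $E_0^2$ and $E_1^2$, adding it to both sides gives $E_1(\bm{x},\bm{\lambda},\bm{\mu})^2 \leq E_0(\bm{x},\bm{\lambda},\bm{\mu})^2$, and taking nonnegative square roots completes the proof. I do not anticipate any real obstacles here; the only care required is to explicitly invoke the sign conditions built into $\mathcal{D}_0$ when applying the scalar inequality $\min\{a,b\}^2 \leq ab$, since that bound can fail without $a,b\geq 0$.
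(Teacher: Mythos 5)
Your proof is correct: the reduction to the componentwise inequality $\min\{a,b\}^2 \leq ab$ for $a,b \geq 0$ (valid here precisely because $\mathcal{D}_0$ forces $-\bm{r}(\bm{x}) \geq \bm{0}$ and $\bm{\mu} \geq \bm{0}$), followed by adding $E_c(\bm{x})$ via (\ref{eq:E=Em+Ec}) and taking nonnegative square roots, is exactly the standard argument for this result. The paper itself defers the proof to the companion paper, and your argument is the expected one; no gaps.
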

%%%%%%%%%%%%%%%%%%%%%%%%%%%%%%%%%%%%%%%%

\begin{theorem} \label{thm:E1-error-bound}
Suppose that $\bm{x}^*$ is a local minimizer of problem (\ref{prob:main-nlp}) and that $f, \bm{h} \in \mathcal{C}^2$ at $\bm{x}^*$. If there exists $\bm{\lambda}^*$ and $\bm{\mu}^* \geq \bm{0}$ such that the KKT conditions and the second-order sufficient optimality conditions hold satisfied at $(\bm{x}^*, \bm{\lambda}^*, \bm{\mu}^*)$ then there exists a neighborhood $\mathcal{N}$ of $\bm{x}^*$ and a constant $c$ such that
\begin{align}
\|\bm{x} - \bm{x}^*\| + \|\bm{\lambda} - \hat{\bm{\lambda}}\| + \|\bm{\mu} - \hat{\bm{\mu}}\| 
\leq c E_1 (\bm{x}, \bm{\lambda}, \bm{\mu}) \label{result:E1-error-bound}
\end{align}
for all $\bm{x} \in \mathcal{N}$ where $(\hat{\bm{\lambda}}, \hat{\bm{\mu}})$ denotes the projection of $(\bm{\lambda}, \bm{\mu})$ onto the set of KKT multipliers at $\bm{x}^*$. 
\end{theorem}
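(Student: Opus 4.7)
The plan is to argue by contradiction. Suppose the bound fails. Then there is a sequence $\{(\bm{x}_k, \bm{\lambda}_k, \bm{\mu}_k)\}$ with $\bm{x}_k \to \bm{x}^*$ such that, writing $\Lambda^*$ for the polyhedral set of KKT multipliers at $\bm{x}^*$, $(\hat{\bm{\lambda}}_k, \hat{\bm{\mu}}_k)$ for the projection of $(\bm{\lambda}_k, \bm{\mu}_k)$ onto $\Lambda^*$, and
\[
t_k := \|\bm{x}_k - \bm{x}^*\| + \|\bm{\lambda}_k - \hat{\bm{\lambda}}_k\| + \|\bm{\mu}_k - \hat{\bm{\mu}}_k\|,
\]
one has $E_1(\bm{x}_k, \bm{\lambda}_k, \bm{\mu}_k)/t_k \to 0$. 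Linearity of $\bm{r}$ makes $\Lambda^*$ a closed convex polyhedron, so the projection is uniquely defined and nonexpansive. After passing to a subsequence I may assume $(\hat{\bm{\lambda}}_k, \hat{\bm{\mu}}_k) \to (\hat{\bm{\lambda}}, \hat{\bm{\mu}}) \in \Lambda^*$, the boundedness coming from the observation that if the projection diverged the $\|\nabla_x \mathcal{L}\|$ term of $E_1$ could not decay to zero relative to $t_k$.

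Next, normalize by setting $\bm{u}_k := (\bm{x}_k - \bm{x}^*)/t_k$, $\bm{v}_k := (\bm{\lambda}_k - \hat{\bm{\lambda}}_k)/t_k$, and $\bm{w}_k := (\bm{\mu}_k - \hat{\bm{\mu}}_k)/t_k$, and extract a further subsequence producing limits $(\bm{u}, \bm{v}, \bm{w})$ with $\|\bm{u}\| + \|\bm{v}\| + \|\bm{w}\| = 1$. Exploiting $f, \bm{h} \in \mathcal{C}^2$ and $\nabla_x \mathcal{L}(\bm{x}^*, \hat{\bm{\lambda}}_k, \hat{\bm{\mu}}_k) = \bm{0}$, a Taylor expansion of $\bm{h}$ and of $\nabla_x \mathcal{L}$ around $\bm{x}^*$, divided through by $t_k$, yields in the limit
\begin{align*}
\nabla \bm{h}(\bm{x}^*) \bm{u} &= \bm{0}, \\
\nabla^2_{xx}\mathcal{L}(\bm{x}^*, \hat{\bm{\lambda}}, \hat{\bm{\mu}})\bm{u} + \nabla \bm{h}(\bm{x}^*)^{\intercal}\bm{v} + \bm{A}^{\intercal}\bm{w} &= \bm{0}.
\end{align*}
A case analysis on $\bm{\Phi}(-\bm{r}(\bm{x}_k), \bm{\mu}_k)/t_k \to \bm{0}$, splitting constraint indices into inactive, strictly active ($\hat{\mu}_i > 0$), and weakly active, then forces $w_i = 0$ on the inactive indices, $\bm{a}_i^{\intercal}\bm{u} = 0$ on the strictly active indices, and $\bm{a}_i^{\intercal}\bm{u} \leq 0$ together with $w_i \geq 0$ on the weakly active indices. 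These conditions place $\bm{u}$ in the critical cone of problem (\ref{prob:main-nlp}) at $\bm{x}^*$.

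The SOSC hypothesis closes the loop. Taking the inner product of the linearized stationarity equation with $\bm{u}$ and using $\nabla \bm{h}(\bm{x}^*)\bm{u} = \bm{0}$ together with the sign information on $\bm{w}$ produces
\[
\bm{u}^{\intercal}\nabla^2_{xx}\mathcal{L}(\bm{x}^*, \hat{\bm{\lambda}}, \hat{\bm{\mu}})\bm{u} = -(\bm{A}\bm{u})^{\intercal}\bm{w} \leq 0,
\]
whereas SOSC requires the left-hand side to be strictly positive on nonzero critical directions, giving $\bm{u} = \bm{0}$. The stationarity equation then reduces to $\nabla\bm{h}(\bm{x}^*)^{\intercal}\bm{v} + \bm{A}^{\intercal}\bm{w} = \bm{0}$ with $\bm{w}$ satisfying the sign and support conditions above, which is exactly the condition that $(\bm{v}, \bm{w})$ be tangent to $\Lambda^*$ at $(\hat{\bm{\lambda}}, \hat{\bm{\mu}})$. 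On the other hand, by the projection property, $(\bm{\lambda}_k - \hat{\bm{\lambda}}_k, \bm{\mu}_k - \hat{\bm{\mu}}_k)$ belongs to the normal cone of $\Lambda^*$ at $(\hat{\bm{\lambda}}_k, \hat{\bm{\mu}}_k)$, and since $\Lambda^*$ is polyhedral this membership is preserved in the limit. A direction that is both tangent and normal must vanish, so $(\bm{v}, \bm{w}) = (\bm{0}, \bm{0})$, contradicting $\|\bm{u}\| + \|\bm{v}\| + \|\bm{w}\| = 1$.

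The hardest part will be the bookkeeping in the $\bm{\Phi}$ case analysis, because on the weakly active indices the scaled quantities $\hat{\mu}_{k,i}/t_k$ and $-\bm{a}_i^{\intercal}(\bm{x}_k - \bm{x}^*)/t_k$ may exchange order along the sequence, and the interplay of these cases is exactly what must simultaneously deliver the critical-cone conditions on $\bm{u}$, the signs on $\bm{w}$, and the tangent-cone membership of $(\bm{v}, \bm{w})$. A secondary technical point is that when $\Lambda^*$ is not a singleton, the subsequential limit $(\hat{\bm{\lambda}}, \hat{\bm{\mu}})$ need not agree with the specific pair $(\bm{\lambda}^*, \bm{\mu}^*)$ at which SOSC is hypothesized; bridging this either requires invoking SOSC in the form stating that the quadratic on the critical cone is multiplier-independent, or a stronger hypothesis uniform over $\Lambda^*$.
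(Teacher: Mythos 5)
First, a framing point: this paper does not contain a proof of Theorem~\ref{thm:E1-error-bound}. The theorem is quoted from the companion paper \cite{diffenderfer2020global} (see the remark at the start of Section~\ref{sec:Error-Estimators} deferring all proofs of the error-estimator results there), so there is no in-paper argument to compare yours against. Your contradiction-plus-normalization strategy is the standard route to error bounds of this type and is a plausible reconstruction, but as written the argument does not close, and the failure is exactly where you predicted it would be hardest: the sign bookkeeping on the weakly active indices. You assert that the $\bm{\Phi}$ analysis yields $\bm{a}_i^{\intercal}\bm{u} \leq 0$ and $w_i \geq 0$ there. With those signs each weakly active term of $(\bm{A}\bm{u})^{\intercal}\bm{w} = \sum_i (\bm{a}_i^{\intercal}\bm{u})\, w_i$ is nonpositive, so your identity gives $\bm{u}^{\intercal}\nabla^2_{xx}\mathcal{L}\,\bm{u} = -(\bm{A}\bm{u})^{\intercal}\bm{w} \geq 0$, which is perfectly consistent with (\textbf{SOSC}) and produces no contradiction. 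What actually makes the step work is a complementarity relation between the two scaled quantities, not separate sign conditions: from $\min\{-r_i(\bm{x}_k),\mu_{k,i}\}/t_k \to 0$ and $-r_i(\bm{x}_k)/t_k \to -\bm{a}_i^{\intercal}\bm{u}$, whenever $\bm{a}_i^{\intercal}\bm{u} < 0$ one must have $\mu_{k,i}/t_k \to 0$, hence $w_i = -\lim_k \hat{\mu}_{k,i}/t_k \leq 0$ and $(\bm{a}_i^{\intercal}\bm{u})w_i \geq 0$; whenever $\bm{a}_i^{\intercal}\bm{u} = 0$ the product vanishes regardless of $w_i$. That case split, not $w_i \geq 0$, is what delivers $(\bm{A}\bm{u})^{\intercal}\bm{w} \geq 0$ and hence $\bm{u}^{\intercal}\nabla^2_{xx}\mathcal{L}\,\bm{u} \leq 0$.

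Two further gaps that you flag but do not resolve are genuine. (i) Boundedness of the projected multipliers: the one-line justification fails because $\nabla_x \mathcal{L}(\bm{x}^*, \hat{\bm{\lambda}}_k, \hat{\bm{\mu}}_k) = \bm{0}$ holds exactly for every $k$, so a divergent sequence in $\Lambda^*$ is not automatically visible in the $\|\nabla_x\mathcal{L}\|$ term of $E_1$; indeed the estimate cannot hold for completely arbitrary $(\bm{\lambda},\bm{\mu})$, and the statement implicitly needs a bounded or localized multiplier argument. (ii) Since $\nabla^2_{xx}\mathcal{L}$ depends on $\bm{\lambda}$, invoking (\textbf{SOSC}) at the subsequential limit $(\hat{\bm{\lambda}},\hat{\bm{\mu}})$ rather than at the hypothesized $(\bm{\lambda}^*,\bm{\mu}^*)$ requires either a multiplier-uniform second-order condition or a restriction of $(\bm{\lambda},\bm{\mu})$ to a neighborhood of $(\bm{\lambda}^*,\bm{\mu}^*)$ so that the projections converge to that specific multiplier; the critical cone is multiplier-independent, but the Hessian of the Lagrangian is not. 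Finally, your tangent/normal-cone step for $(\bm{v},\bm{w})$ also relies on the incorrect sign conditions (with $\bm{u}=\bm{0}$ the $\bm{\Phi}$ analysis gives $\lim_k \mu_{k,i}/t_k \geq 0$, which does not imply $w_i \geq 0$); a cleaner and standard alternative is to bound the distance from $(\bm{\lambda}_k,\bm{\mu}_k)$ to the polyhedron $\Lambda^*$ directly via a Hoffman-type error bound for the linear system defining $\Lambda^*$, with right-hand-side perturbations controlled by $\|\nabla_x\mathcal{L}\|$ and $\|\bm{\Phi}\|$.
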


\begin{corollary} \label{cor:E0-error-bound}
Suppose that the hypotheses of Theorem~\ref{thm:E1-error-bound} are satisfied at $(\bm{x}^*, \bm{\lambda}^*, \bm{\mu}^*)$. Then there exists a neighborhood $\mathcal{N}$ of $\bm{x}^*$ and a constant $c$ such that
\begin{align}
\|\bm{x} - \bm{x}^*\| + \|\bm{\lambda} - \hat{\bm{\lambda}}\| + \|\bm{\mu} - \hat{\bm{\mu}}\| 
\leq c E_0 (\bm{x}, \bm{\lambda}, \bm{\mu}) \label{result:E0-error-bound}
\end{align}
for all $(\bm{x}, \bm{\lambda}, \bm{\mu}) \in \mathcal{D}_0$ with $\bm{x} \in \mathcal{N}$ where $(\hat{\bm{\lambda}}, \hat{\bm{\mu}})$ denotes the projection of $(\bm{\lambda}, \bm{\mu})$ onto the set of KKT multipliers at $\bm{x}^*$. 
\end{corollary}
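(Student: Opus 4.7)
The plan is to derive Corollary~\ref{cor:E0-error-bound} as an almost immediate consequence of Theorem~\ref{thm:E1-error-bound} combined with the domination inequality from Lemma~\ref{lem:E0-E1-relationship}. The key observation is that the hypotheses of the corollary match those of Theorem~\ref{thm:E1-error-bound} exactly, so the $E_1$ error bound is already available; what remains is to translate this into an $E_0$ bound on the restricted domain $\mathcal{D}_0$.

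First I would invoke Theorem~\ref{thm:E1-error-bound} to obtain a neighborhood $\mathcal{N}$ of $\bm{x}^*$ and a constant $c_1$ such that
\begin{align*}
\|\bm{x} - \bm{x}^*\| + \|\bm{\lambda} - \hat{\bm{\lambda}}\| + \|\bm{\mu} - \hat{\bm{\mu}}\|
\leq c_1 \, E_1 (\bm{x}, \bm{\lambda}, \bm{\mu})
\end{align*}
for every $\bm{x} \in \mathcal{N}$ (with $(\hat{\bm{\lambda}}, \hat{\bm{\mu}})$ the projection of $(\bm{\lambda}, \bm{\mu})$ onto the set of KKT multipliers at $\bm{x}^*$). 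Next, I would restrict attention to triples $(\bm{x}, \bm{\lambda}, \bm{\mu}) \in \mathcal{D}_0$ with $\bm{x} \in \mathcal{N}$. Since membership in $\mathcal{D}_0$ guarantees $\bm{x} \in \Omega$ and $\bm{\mu} \geq \bm{0}$, the hypotheses of Lemma~\ref{lem:E0-E1-relationship} are met, and therefore $E_1 (\bm{x}, \bm{\lambda}, \bm{\mu}) \leq E_0 (\bm{x}, \bm{\lambda}, \bm{\mu})$.

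Chaining these two inequalities yields
\begin{align*}
\|\bm{x} - \bm{x}^*\| + \|\bm{\lambda} - \hat{\bm{\lambda}}\| + \|\bm{\mu} - \hat{\bm{\mu}}\|
\leq c_1 \, E_1 (\bm{x}, \bm{\lambda}, \bm{\mu})
\leq c_1 \, E_0 (\bm{x}, \bm{\lambda}, \bm{\mu}),
\end{align*}
and setting $c := c_1$ concludes the argument. There is no serious obstacle here; the only subtle point worth spelling out is that the domain restriction to $\mathcal{D}_0$ is precisely what makes the term $-\bm{\mu}^{\intercal}\bm{r}(\bm{x})$ in the definition of $E_0$ nonnegative (since $\bm{\mu} \geq \bm{0}$ and $\bm{r}(\bm{x}) \leq \bm{0}$), which is exactly the ingredient that underlies Lemma~\ref{lem:E0-E1-relationship} and legitimizes the square root in the definition of $E_0$. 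The neighborhood $\mathcal{N}$ and the constant $c$ are inherited unchanged from Theorem~\ref{thm:E1-error-bound}.
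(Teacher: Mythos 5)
Your proof is correct and is essentially the intended argument: the paper presents this as an immediate corollary of Theorem~\ref{thm:E1-error-bound} via the domination $E_1 \leq E_0$ on $\mathcal{D}_0$ from Lemma~\ref{lem:E0-E1-relationship} (with details deferred to the companion paper), which is exactly the chain of inequalities you wrote down.
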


\begin{lemma} \label{lem:E1diff}
Let $(\bm{x}^*, \bm{\lambda}^*, \bm{\mu}^*)$ be a KKT point for problem (\ref{prob:main-nlp}) that satisfies assumption (\textbf{SCS}) and suppose that $f, \bm{h} \in \mathcal{C}^2$ in a neighborhood of $\bm{x}^*$. Then there exists a neighborhood $\mathcal{N}$ about $(\bm{x}^*, \bm{\lambda}^*, \bm{\mu}^*)$ such that $E_1 (\bm{x}, \bm{\lambda}, \bm{\mu}) \in \mathcal{C}^2$, for all $(\bm{x}, \bm{\lambda}, \bm{\mu}) \in \mathcal{N}$.
\end{lemma}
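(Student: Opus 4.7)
\textit{Proof proposal.} The plan is to split the radicand of $E_1$ in \eqref{def:E1} into its three summands and analyze each separately on a neighborhood of $(\bm{x}^*, \bm{\lambda}^*, \bm{\mu}^*)$. Since $\bm{r}$ is affine by \eqref{def:r(x)} and $f, \bm{h} \in \mathcal{C}^2$ near $\bm{x}^*$, the Lagrangian is $\mathcal{C}^2$ in $\bm{x}$, so the summands $\|\nabla_x \mathcal{L}(\bm{x}, \bm{\lambda}, \bm{\mu})\|^2$ and $\|\bm{h}(\bm{x})\|^2$ already possess the desired smoothness. The essential obstruction is therefore the componentwise minimum $\bm{\Phi}(-\bm{r}(\bm{x}), \bm{\mu})$, which is only piecewise affine in general.

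The key step is to use (\textbf{SCS}) to eliminate the nonsmoothness of $\bm{\Phi}$ locally. Strict complementary slackness partitions $\{1, \ldots, m\}$ into the two disjoint index sets
\begin{align*}
\mathcal{I}_1 := \{\, i : \mu_i^* > 0,\ r_i(\bm{x}^*) = 0 \,\}
\quad \text{and} \quad
\mathcal{I}_2 := \{\, i : \mu_i^* = 0,\ -r_i(\bm{x}^*) > 0 \,\}.
\end{align*}
By openness of the strict inequalities and continuity of the affine $\bm{r}$, there is a neighborhood $\mathcal{N}$ of $(\bm{x}^*, \bm{\lambda}^*, \bm{\mu}^*)$ on which $-r_i(\bm{x}) < \mu_i$ for every $i \in \mathcal{I}_1$ and $\mu_i < -r_i(\bm{x})$ for every $i \in \mathcal{I}_2$, uniformly over $(\bm{x}, \bm{\lambda}, \bm{\mu}) \in \mathcal{N}$. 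Consequently, on $\mathcal{N}$ the $i$th component of $\bm{\Phi}$ coincides with a single affine selector (either $-r_i(\bm{x})$ or $\mu_i$), and $\|\bm{\Phi}(-\bm{r}(\bm{x}), \bm{\mu})\|^2$ is $\mathcal{C}^\infty$ on $\mathcal{N}$.

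Combining the three summands, the radicand $E_1^2$ is $\mathcal{C}^2$ on $\mathcal{N}$. The remaining step is to pass from $E_1^2$ to $E_1 = \sqrt{E_1^2}$, and I expect this to be the main obstacle: because the radicand vanishes precisely at the KKT point, the square root can fail to be $\mathcal{C}^2$ there (e.g., $x \mapsto \sqrt{x^2}$). To handle this, I would exploit the sum-of-squares structure obtained in the previous step to write $E_1^2 = \|\bm{F}(\bm{x}, \bm{\lambda}, \bm{\mu})\|^2$ for a single $\mathcal{C}^2$ map $\bm{F}$ assembled from $\nabla_x \mathcal{L}$, $\bm{h}$, and the locally smooth representatives of $\bm{\Phi}_{\mathcal{I}_1}$ and $\bm{\Phi}_{\mathcal{I}_2}$. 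I would then show that the linearization of $\bm{F}$ at $(\bm{x}^*, \bm{\lambda}^*, \bm{\mu}^*)$ has full column rank (a consequence of (\textbf{SCS}) and the SOSC/LICQ-type structure already used in Theorem~\ref{thm:E1-error-bound}), which shrinks $\mathcal{N}$ to a region where a standard change-of-variables argument upgrades $\mathcal{C}^2$-regularity of $\|\bm{F}\|^2$ to $\mathcal{C}^2$-regularity of $\|\bm{F}\|$. This rank condition on $\bm{F}$ is the single most delicate input, and the bulk of the proof effort should be in verifying it.
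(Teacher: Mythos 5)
Your handling of the componentwise minimum is exactly the intended mechanism, and it is the substance of the lemma: under (\textbf{SCS}) the indices split into $\mathcal{I}_1$ and $\mathcal{I}_2$, and on a sufficiently small neighborhood each $\Phi_i(-\bm{r}(\bm{x}),\bm{\mu})$ collapses to the single affine selector $-r_i(\bm{x})$ (for $i\in\mathcal{I}_1$) or $\mu_i$ (for $i\in\mathcal{I}_2$), so $\|\bm{\Phi}(-\bm{r}(\bm{x}),\bm{\mu})\|^2$ is locally polynomial and the radicand $E_1^2=E_{m,1}+E_c$ is as smooth as the other two summands allow. (A minor quibble shared with the statement itself: $\|\nabla_x\mathcal{L}\|^2$ involves first derivatives of $f$ and $\bm{h}$, so $f,\bm{h}\in\mathcal{C}^2$ literally yields only $\mathcal{C}^1$ for that summand in $\bm{x}$; $\mathcal{C}^3$ would be needed for full $\mathcal{C}^2$ regularity there.) The present paper defers the proof to the companion paper, but this resolution of the min via strict complementarity is clearly the argument, and it is all that is ever used downstream --- see the remark following Lemma~\ref{lem:AltLem13.2}, where the lemma is invoked precisely to get twice continuous differentiability of the \emph{squared} quantity $E_{m,1}$.

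Your final step, however, would fail. If $\bm{F}$ is smooth with $\bm{F}(\bm{z}^*)=\bm{0}$ and $\nabla\bm{F}(\bm{z}^*)$ of full column rank, then near $\bm{z}^*$ one has $\|\bm{F}(\bm{z})\|\approx\|\nabla\bm{F}(\bm{z}^*)(\bm{z}-\bm{z}^*)\|$, i.e., a norm of the displacement; the model case $\bm{F}(\bm{z})=\bm{z}$ gives $\|\bm{F}(\bm{z})\|=\|\bm{z}\|$, which is not even once differentiable at the zero. So the full-rank condition is not a cure for the square root at a zero of the radicand --- it is exactly the regime in which $\sqrt{\cdot}$ destroys differentiability, and no change of variables can repair this (indeed, under the additional hypotheses of Theorem~\ref{thm:E1-error-bound}, the error bound forces $E_1$ to grow at least linearly in $\|\bm{x}-\bm{x}^*\|$ while attaining its minimum value $0$ at the KKT point, which already rules out differentiability there). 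The correct resolution is that no such upgrade is needed: wherever $E_1>0$ the outer square root is $\mathcal{C}^\infty$ and $E_1\in\mathcal{C}^2$ follows at once from the smoothness of the radicand, while at the zero set the lemma is only ever applied through $E_{m,1}$ and $E_1^2$. You should end the proof after establishing smoothness of the radicand rather than invest effort in the rank verification, which cannot deliver what you want.
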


\section{NPASA: Algorithm for Nonlinear Constrained Optimization Problems} \label{section:npasa}
In this section, we provide pseudocode for NPASA designed to solve problem (\ref{prob:main-nlp}). For %We note that 
a detailed motivation and discussion of NPASA %is available in 
we direct the reader to Section~3 of the companion paper \cite{diffenderfer2020global}. NPASA is made up of two phases and each phase consists of a set of subproblems. To simplify the presentation of NPASA, we composed algorithms that make up the bulk of each phase. Phase one of NPASA contains the Global Step (GS) algorithm while phase two contains the %Constraint and Multiplier Step (CMS) algorithm. 
Local Step (LS) algorithm. Pseudocode for GS and LS can be found in Algorithm~\ref{alg:gs} and Algorithm~\ref{alg:cms}, respectively. Note that the %multiplier step in the CMS 
LS algorithm makes use of the following function
\begin{align}
\mathcal{L}_{p}^{i} (\bm{z}, \bm{\nu}) := f(\bm{z}) + \bm{\nu}^{\intercal} \bm{h}(\bm{z}) + p \| \bm{h}(\bm{z}) - \bm{h}(\bm{z}_i) \|^2 \label{def:penalized-Lagrangian}
\end{align}
and that we will write $(\bm{x}_k, \bm{\lambda}_k, \bm{\mu}_k)$ to denote the current primal-dual iterate of NPASA.

NPASA is then comprised of the GS and LS algorithms with criterion for branching between these algorithms. Pseudocode for NPASA can be found in Algorithm~\ref{alg:npasa}. The focus of the analysis in this paper is on phase two of NPASA which contains the LS algorithm. Phase one contains the GS algorithm and updates for the penalty parameter used in GS which were analyzed in Section~4 of \cite{diffenderfer2020global} to establish global convergence properties for NPASA. As such, there is no analysis of phase one of NPASA included in this paper as all necessary analysis was completed in \cite{diffenderfer2020global}. 

\begin{center}
\begin{algorithm}
%	\algsetup{linenosize=\notsotiny}
%	\notsotiny
	\caption{GS - Global Step Algorithm} \label{alg:gs} 
%	\begin{algorithmic}[1]
        % Initialize parameters
        {\bfseries Inputs:} Initial guess $(\bm{x}, \bm{\lambda}, \bm{\mu})$ and scalar parameters $\phi > 1$, $\bar{\lambda} > 0$, and $q$\\
        % Phase one
%		\STATE{}
		$\displaystyle \bm{\bar{\lambda}} = Proj_{[-\bar{\lambda}, \bar{\lambda}]} \left( \bm{\lambda} \right)$\\
		$\displaystyle \bm{x}' = \arg\min \left\{ \mathcal{L}_{q} \left( \bm{x}, \bm{\bar{\lambda}} \right) : \bm{x} \in \Omega \right\}$\\
		Set $\bm{\lambda}' \gets \bm{\bar{\lambda}} + 2 q \bm{h}(\bm{x}')$\\
		Construct $\bm{\mu}(\bm{x}', 1)$ from PPROJ output\footnote{Using formula (\ref{eq:pasa-construct-mult.13})} and set $\bm{\mu}' \gets \bm{\mu}(\bm{x}', 1)$\\
%    	\STATE{}
		{\bfseries Return} $(\bm{x}', \bm{\lambda}', \bm{\mu}')$
%	\end{algorithmic}
\end{algorithm}
\end{center}

\begin{center}
\begin{algorithm}
%	\algsetup{linenosize=\notsotiny}
%	\notsotiny
	\caption{LS - Local Step Algorithm} \label{alg:cms} 
        % Initialize parameters
		{\bfseries Inputs:} Initial guess $(\bm{x}, \bm{\lambda}, \bm{\mu})$ and scalar parameters $\theta \in (0,1)$, $\alpha \in (0, 1]$, $\beta \geq 1$, $\sigma \in (0,1)$, $\tau \in (0,1)$, $p >> 1$, $\delta \in (0,1)$, $\gamma \in (0,1)$ \\ %, $i_C \in \mathbb{Z}_+$, $i_M \in \mathbb{Z}_+$\\
        % Phase two 
        \emph{Constraint Step}: Set $\bm{w}_0 \gets \bm{x}$ and set $i \gets 0$.\\
%        \For{$i = 0, 1, \ldots, i_C - 1$}{
        \While{$E_c (\bm{w}_i) > \theta E_{m,1} (\bm{x}, \bm{\lambda}, \bm{\mu})$}{
            Choose $p_i$ such that $p_i \geq \max \left\{ \beta^2, \| \bm{h} (\bm{w}_i) \|^{-2} \right\}$ and set $s_i = 1$\\
  			$\left[ \bm{\overline{w}}_{i+1}, \bm{y}_{i+1} \right] = \arg\min \left\{\| \bm{w} - \bm{w}_i \|^2 + p_i \| \bm{y} \|^2 : \nabla \bm{h} (\bm{w}_i) (\bm{w} - \bm{w}_i) + \bm{y} = - \bm{h} (\bm{w}_i), \bm{w} \in \Omega \right\}$\\
            Set $\alpha_{i+1} \gets 1 - \| \bm{y}_{i+1} \|$\\
            \eIf{$\alpha_{i+1} < \alpha$}{
                \textbf{Return} $(\bm{x}, \bm{\lambda}, \bm{\mu})$
            }{
            \While{$\| \bm{h} (\bm{w}_i + s_i (\bm{\overline{w}}_{i+1} - \bm{w}_{i})) \| > (1 - \tau \alpha_{i+1} s_i) \| \bm{h} (\bm{w}_i) \|$}{
                Set $s_i \gets \sigma s_i$
            }
    	    Set $\bm{w}_{i+1} \gets \bm{w}_i + s_i ( \bm{\overline{w}}_{i+1} - \bm{w}_i)$ and set $i \gets i+1$
            }
        }
   	    Set $\bm{w} \gets \bm{w}_i$\\
        \emph{Multiplier Step}: Set $\bm{z}_0 \gets \bm{w}$, set $p_0 \gets p$, and set $i \gets 0$\\
    	$(\bm{\nu}_0, \bm{\eta}_0) \in \arg \min \left\{ E_{m,0} ( \bm{z}_0, \bm{\nu}, \bm{\eta}) + \gamma \| [ \bm{\nu}, \bm{\eta} ] \|^2 : \bm{\eta} \geq \bm{0} \right\}$\\
    	$\bm{\eta}_0' \in \arg\min \left\{ E_{m,1} ( \bm{z}_0, \bm{\nu}_0, \bm{\eta}) : \bm{\eta} \geq \bm{0} \right\}$\\
        %Set $K_0 \gets E_{m,0} ( \bm{z}_0, \bm{\nu}_0, \bm{\eta}_0)$\\
%        \For{$i = 0, 1, \ldots, i_M - 1$}{
        \While{$E_{m,1} (\bm{z}_i, \bm{\nu}_i, \bm{\eta}_i') > \theta E_c (\bm{w})$}{
            Increase $p_i$ if necessary\\
            $\bm{z}_{i+1} = \arg \min \left\{ \mathcal{L}_{p_i}^i (\bm{z}, \bm{\nu}_i) : \nabla \bm{h} (\bm{z}_i) (\bm{z} - \bm{z}_i) = 0, \bm{z} \in \Omega \right\}$\\
    	    $(\bm{\nu}_{i+1}, \bm{\eta}_{i+1}) \in \arg\min \left\{ E_{m,0} ( \bm{z}_{i+1}, \bm{\nu}, \bm{\eta}) + \gamma \| [ \bm{\nu}, \bm{\eta} ] \|^2 : \bm{\eta} \geq \bm{0} \right\}$\\
            $\bm{\eta}_{i+1}' \in \arg\min \left\{ E_{m,1} ( \bm{z}_{i+1}, \bm{\nu}_{i+1}, \bm{\eta}) : \bm{\eta} \geq \bm{0} \right\}$\\
            %\eIf{$E_{m,1} (\bm{z}_{i+1}, \bm{\nu}_{i+1}, \bm{\eta}_{i+1}') \leq \delta K_i$}{
            %    Set $K_{i+1} \gets E_{m,1} (\bm{z}_{i+1}, \bm{\nu}_{i+1}, \bm{\eta}_{i+1}')$
            %}{
            %    Set $(\bm{\nu}_{i+1}, \bm{\eta}_{i+1}') \gets (\bm{\nu}_i, \bm{\eta}_i')$ and $K_{i+1} \gets K_i$
            %}
            \eIf{$E_{m,1} (\bm{z}_{i+1}, \bm{\nu}_{i+1}, \bm{\eta}_{i+1}') > \delta E_{m,1} (\bm{z}_i, \bm{\nu}_i, \bm{\eta}_i')$}{
                \textbf{Return} $(\bm{x}, \bm{\lambda}, \bm{\mu})$
            }{
                Set $i \gets i+1$
            }
            
        }
		{\bfseries Return} $(\bm{z}_i, \bm{\nu}_i, \bm{\eta}_i')$
\end{algorithm}
\end{center}

\begin{center}
\begin{algorithm}
%	\algsetup{linenosize=\notsotiny}
%	\notsotiny
	\caption{NPASA - Nonlinear Polyhedral Active Set Algorithm} \label{alg:npasa} 
    % Initialize parameters
	{\bfseries Inputs:} Initial guess $(\bm{x}_{0}, \bm{\lambda}_{0}, \bm{\mu}_{0})$ and scalar parameters $\varepsilon > 0$, $\theta \in (0,1)$, $\phi > 1$, $\bar{\lambda} > 0$, $q_0 \geq 1$, $\alpha \in (0, 1]$, $\beta \geq 1$, $\sigma \in (0,1)$, $\tau \in (0,1)$, $p >> 1$, $\delta \in (0,1)$, $\gamma > 0$. \\%, $i_C \in \mathbb{Z}_+$, $i_M \in \mathbb{Z}_+$.\\
	Set $e_0 = E_1 (\bm{x}_{0}, \bm{\lambda}_{0}, \bm{\mu}_{0})$, $k = 0$, and goto phase one.\\
    % Phase one
	\textbf{------ Phase one ------}\\
	Set $q_k \gets \max \left\{ \phi, (e_{k-1})^{-1} \right\}  q_{k-1}$.\\
	\While{$E_{m,1} (\bm{x}_k, \bm{\lambda}_k, \bm{\mu}(\bm{x}_k, 1)) > \varepsilon$}{
	    \emph{Global Step}: $\displaystyle \left( \bm{x}_{k+1}, \bm{\lambda}_{k+1}, \bm{\mu}_{k+1} \right) \gets GS \left( \bm{x}_k, \bm{\lambda}_k, \bm{\mu}_k; \phi, \bar{\lambda}, q_k \right)$\\
		\emph{Update parameters}: $q_{k+1} \gets q_k$, $e_{k+1} \gets \min \left\{ E_1 (\bm{x}_{k+1}, \bm{\lambda}_{k+1}, \bm{\mu}_{k+1}), e_k \right\}$, $k \gets k+1$.\\
		\emph{Check branching criterion}:\\
        \If{$E_{m,1} (\bm{x}_k, \bm{\lambda}_k, \bm{\mu}_k) \leq \theta E_c (\bm{x}_{k-1})$}{
		    \textbf{goto} phase two
        }
    }
    % Phase two 
	\textbf{------ Phase two ------}\\
	\While{$E_{m,1} (\bm{x}_k, \bm{\lambda}_k, \bm{\mu}_k) > \varepsilon$}{
	    \emph{Local Step}: $\displaystyle \left( \bm{z}, \bm{\nu}, \bm{\eta} \right) \gets LS \left( \bm{x}_k, \bm{\lambda}_k, \bm{\mu}_k; \theta, \alpha, \beta, \sigma, \tau, p, \delta, \gamma \right)$\\
		\emph{Check branching criterion}:\\
        \eIf{$E_1 (\bm{z}, \bm{\nu}, \bm{\eta}) > \theta E_1 (\bm{x}_k, \bm{\lambda}_k, \bm{\mu}_k)$}{
           	\textbf{goto} phase one
        }{
            Set $(\bm{x}_{k+1}, \bm{\lambda}_{k+1}, \bm{\mu}_{k+1}) \gets (\bm{z}, \bm{\nu}, \bm{\eta})$ and set $e_{k+1} \gets \min \left\{ E_1 (\bm{x}_{k+1}, \bm{\lambda}_{k+1}, \bm{\mu}_{k+1}), e_k \right\}$\\
            Set $k \gets k+1$.
        }
    }
    {\bfseries Return} $(\bm{x}_k, \bm{\lambda}_k, \bm{\mu}_k)$
\end{algorithm}
\end{center}

\section{Convergence Analysis for Phase Two of NPASA} \label{section:npasa-phase-two}
In this section, we provide conditions under which phase two of NPASA satisfies desirable local convergences properties. From the pseodocode in Algorithms~\ref{alg:cms} and \ref{alg:npasa}, observe that the LS algorithm in phase two of NPASA is decomposed into constraint and multiplier steps. The constraint step is designed to move the primal iterates to a point satisfying the nonlinear constraints, $\bm{h} (\bm{x}) = \bm{0}$, while the multiplier step provides updates for the dual iterates while attempting to maintain primal iterates satisfying the nonlinear constraints. First, we prove fast local convergence properties of the constraint step in Section~\ref{subsec:constraint-step-analysis}. Then we provide a local convergence analysis of the multiplier step in Section~\ref{subsec:multiplier-step-analysis}. Lastly, in Section~\ref{subsec:npasa-phase-two-local-conv} we use our analysis of the constraint and multiplier steps to provide a local convergence result for phase two of NPASA. %which contains the LS algorithm. 
%%%%%%%%%%%%%%%%%%%%%%%%%%%%%%%%%%%%%%%%%%%%%%%%%%%%%%%%%%%%%%%%%%%%%%%%%%%%%%%

For several results in this section we will require some subset of the assumptions (\textbf{LICQ}), (\textbf{SCS}), (\textbf{SOSC}), and (\textbf{SSOSC}) provided in Section~\ref{section:introduction}. We will clearly specify which assumptions are used in the statement of each convergence result. Additionally, note that for the purposes of simplifying the analysis in Theorem~\ref{NewtonYSchemeThm}, we will express the polyhedral constraint set for problem (\ref{prob:main-nlp}), originally defined by $\Omega = \{ \bm{x} : \bm{r}(\bm{x}) \leq \bm{0} \}$, in an equivalent form $\Omega = \{ \bm{x} : \bm{g} (\bm{x}) = \bm{0} \}$, for some function $\bm{g} (\bm{x})$. Note that this equivalent formulation of $\Omega$ can be constructed by adding a slack variable to the inequality constraints in problem (\ref{prob:main-nlp}) and performing a change of variables to define $\bm{g} (\cdot)$, as illustrated in Section 3.3.2 of \cite{bertsekas1995nonlinear}. Further, using this approach we have that $\nabla \bm{g}_{\mathcal{S}}(\bm{x})$ is of full row rank provided that $\bm{A}_{\mathcal{S}}$ is of full row rank, for any $\mathcal{S} \subseteq \{1, 2, \ldots, m\}$. It should be noted that the choice to represent $\Omega$ in this equivalent form is solely to simplify the convergence analysis and that an implementation of NPASA can still use the definition of the set $\Omega$ as originally defined in Section~\ref{section:introduction}.% so that problem %(\ref{NewtonYScheme}) 
\subsection{Convergence Analysis for Constraint Step} \label{subsec:constraint-step-analysis}
In this section, we establish a local quadratic convergence rate of the multiplier error estimator $E_c$. Recall the iterative scheme for determining $\bm{w} \in \Omega$ such that $\bm{h} (\bm{w}) = \bm{0}$ used in the constraint step of the LS algorithm for a given initial point $\bm{w}_0$:
\begin{align}
(\bm{w}_{i+1}, \bm{y}_{i+1}) &= \arg\min \left\{\| \bm{w} - \bm{w}_i \|^2 + \| \bm{y} \|^2 : \nabla \bm{h} (\bm{w}_i) (\bm{w} - \bm{w}_i) + \frac{1}{\sqrt{p_i}} \bm{y} = - \bm{h} (\bm{w}_i), \bm{w} \in \Omega \right\} \label{NewtonYScheme} \\
\bm{w}_{i+1} &\gets \bm{w}_i + s_i ( \bm{w}_{i+1} - \bm{w}_i) \label{NewtonYScheme.0}
\end{align}
where $p_i \geq 1$ is a penalty parameter and where $s_i \leq 1$ is chosen such that
\begin{align}
\| \bm{h} (\bm{w}_i + s_i (\bm{w}_{i+1} - \bm{w}_{i})) \| \leq (1 - \tau \alpha_i s_i) \| \bm{h} (\bm{w}_i) \|. \label{NewtonArmijo}
\end{align} 
Here, $\alpha_i$ is defined by $\alpha_i := 1 - \| \bm{y}_i \|$ and to continue performing the scheme we require that $\alpha_i \in [\alpha, 1]$ for some fixed parameter $\alpha > 0$. Note that this is the iterative scheme in the constraint step of the LS algorithm, Algorithm~\ref{alg:cms}, where we have made the change of variables $\bm{y} \gets \frac{1}{\sqrt{p_i}} \bm{y}$ to simplify the analysis in Theorem~\ref{NewtonYSchemeThm}. By introducing the slack variable, $\bm{y}$, into our update scheme, it is no longer necessary to consider the solvability of this problem as the constraint set is always nonempty. 
%TODO
%However, Theorem~\ref{NewtonYSchemeThm} serves to update the convergence analysis performed in Theorem 2.1 of \cite{Hager1993} to account for the presence of the slack variable $\bm{y}$ and penalty parameter $p_i$ in (\ref{NewtonYScheme}). 

\begin{theorem} \label{NewtonYSchemeThm}
Suppose $\bm{x}^*$ satisfies $\bm{h} (\bm{x}^*) = \bm{0}$, $\bm{g} (\bm{x}^*) = \bm{0}$, $\bm{g}, \bm{h} \in \mathcal{C}^2$ near $\bm{x}^*$, and that assumption (\textbf{LICQ}) holds at $\bm{x}^*$. Then there exists a neighborhood $\mathcal{N}$ about $\bm{x}^*$ and a constant $c \in \mathbb{R}$ such that for any $\bm{w}_0 \in \mathcal{N} \cap \Omega$ and sequence of penalty parameters $\{ p_i \}_{i = 0}^{\infty}$ satisfying $p_i \geq \max \left\{ \beta^2, \| \bm{h} (\bm{w}_i) \|^{-2} \right\}$, with $\beta \geq 1$, the iterative scheme (\ref{NewtonYScheme}) -- (\ref{NewtonYScheme.0}) converges to a point $\bm{w}^*$ with the following properties: 
\iffalse
\begin{itemize}
\item[(\ref{NewtonYSchemeThm}.0)] $\bm{h}(\bm{w}^*) = \bm{g}(\bm{w}^*) = \bm{0}$ and $s_i = 1$ satisfies the Armijo line search given by (\ref{NewtonArmijo}) for each $i$.
\end{itemize}
\fi
\begin{align}
\bm{h}(\bm{w}^*) = \bm{0}, \bm{g}(\bm{w}^*) = \bm{0}, \ \text{and} \ s_i = 1 \ \text{satisfies Armijo line search given by (\ref{NewtonArmijo}) for each $i$}. \label{NewtonYSchemeThm.0}
\end{align}
Moreover, for all $i \geq 0$:
\iffalse
\begin{itemize}
\item[(\ref{NewtonYSchemeThm}.1)] $\| \bm{w}_i - \bm{w}_0 \| \leq c \| \bm{h}(\bm{w}_0) \|$
\item[(\ref{NewtonYSchemeThm}.2)] $\| \bm{w}_i - \bm{w}^* \| \leq c \| \bm{h}(\bm{w}_0) \| 2^{-2^i}$
\item[(\ref{NewtonYSchemeThm}.3)] $\| \bm{h}(\bm{w}_{i+1}) \| \leq c \| \bm{h}(\bm{w}_i) \|^2$.
\end{itemize}
\fi
\begin{align}
\| \bm{w}_i - \bm{w}_0 \| &\leq c \| \bm{h}(\bm{w}_0) \|, \label{NewtonYSchemeThm.1} \\
\| \bm{w}_i - \bm{w}^* \| &\leq c \| \bm{h}(\bm{w}_0) \| 2^{-2^i}, \label{NewtonYSchemeThm.2} \\
\| \bm{h}(\bm{w}_{i+1}) \| &\leq c \| \bm{h}(\bm{w}_i) \|^2. \label{NewtonYSchemeThm.3}
\end{align}
\end{theorem}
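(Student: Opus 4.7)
The plan is to treat the subproblem in (\ref{NewtonYScheme}) as a strongly convex quadratic program whose KKT conditions characterize $(\bm{w}_{i+1},\bm{y}_{i+1})$, then bound the step and slack \emph{quadratically} in $\|\bm{h}(\bm{w}_i)\|$, and finally feed that into a Taylor expansion of $\bm{h}$ to get quadratic decrease of the residual, which in turn forces the Armijo test to accept $s_i=1$. All iterate bounds then follow by the standard telescoping used for quadratically convergent sequences.

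Step 1 (feasible primal proxy, the key obstacle). I shrink the neighborhood $\mathcal{N}$ so that the combined Jacobian of $\bm{h}$ and of the active rows of $\bm{g}$ has full row rank at every $\bm{w}_i$; this uses (\textbf{LICQ}), the $\mathcal{C}^2$ smoothness of $\bm{g},\bm{h}$, and the row-rank-preserving property of the $\bm{g}$ representation noted just before the theorem. An implicit-function-theorem argument on the active face $\{\bm{w}: \bm{g}_{\mathcal{A}(\bm{w}_i)}(\bm{w})=\bm{0}\}$ then produces $\tilde{\bm{w}}\in\Omega$ with $\nabla\bm{h}(\bm{w}_i)(\tilde{\bm{w}}-\bm{w}_i)=-\bm{h}(\bm{w}_i)$ and $\|\tilde{\bm{w}}-\bm{w}_i\|\le c\|\bm{h}(\bm{w}_i)\|$, so that $(\tilde{\bm{w}},\bm{0})$ is feasible for (\ref{NewtonYScheme}) whenever $\bm{w}_i$ is close enough to $\bm{x}^*$. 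This is the only nontrivial geometric step; I expect it to be the main obstacle since it requires handling the interaction between the nonlinear linearization of $\bm{h}$ and the polyhedral feasibility of $\bm{w}$. Comparing the optimum to this proxy yields
\begin{equation*}
\|\bm{w}_{i+1}-\bm{w}_i\|^2+\|\bm{y}_{i+1}\|^2 \;\le\; \|\tilde{\bm{w}}-\bm{w}_i\|^2 \;\le\; c^2\|\bm{h}(\bm{w}_i)\|^2,
\end{equation*}
hence both $\|\bm{w}_{i+1}-\bm{w}_i\|$ and $\|\bm{y}_{i+1}\|$ are $O(\|\bm{h}(\bm{w}_i)\|)$.

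Step 2 (quadratic residual decrease). A second-order Taylor expansion gives
\begin{equation*}
\bm{h}(\bm{w}_i+(\bm{w}_{i+1}-\bm{w}_i))=\bm{h}(\bm{w}_i)+\nabla\bm{h}(\bm{w}_i)(\bm{w}_{i+1}-\bm{w}_i)+O\!\left(\|\bm{w}_{i+1}-\bm{w}_i\|^2\right).
\end{equation*}
Substituting the linearized equality constraint $\nabla\bm{h}(\bm{w}_i)(\bm{w}_{i+1}-\bm{w}_i)=-\bm{h}(\bm{w}_i)-\tfrac{1}{\sqrt{p_i}}\bm{y}_{i+1}$ and using $p_i^{-1/2}\le\|\bm{h}(\bm{w}_i)\|$ together with the Step 1 bound on $\|\bm{y}_{i+1}\|$ yields
\begin{equation*}
\|\bm{h}(\bm{w}_{i+1}^{\text{full}})\|\;\le\;\tfrac{1}{\sqrt{p_i}}\|\bm{y}_{i+1}\|+c\|\bm{w}_{i+1}-\bm{w}_i\|^2\;\le\;c\|\bm{h}(\bm{w}_i)\|^2.
\end{equation*}
This is exactly (\ref{NewtonYSchemeThm.3}) at the full step, and it also forces the Armijo criterion (\ref{NewtonArmijo}) to accept $s_i=1$: since $\alpha_{i+1}=1-\|\bm{y}_{i+1}\|=1-O(\|\bm{h}(\bm{w}_i)\|)$, the right-hand side $(1-\tau\alpha_{i+1})\|\bm{h}(\bm{w}_i)\|$ dominates the quadratic term $c\|\bm{h}(\bm{w}_i)\|^2$ provided $\|\bm{h}(\bm{w}_0)\|$ is small enough. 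Hence $s_i=1$ for all $i$, giving (\ref{NewtonYSchemeThm.0}) and (\ref{NewtonYSchemeThm.3}).

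Step 3 (iterate bounds and convergence). Induct on (\ref{NewtonYSchemeThm.3}) to obtain the standard closed form $\|\bm{h}(\bm{w}_j)\|\le\tfrac{1}{c}\bigl(c\|\bm{h}(\bm{w}_0)\|\bigr)^{2^j}$, which decays doubly exponentially once $c\|\bm{h}(\bm{w}_0)\|\le 1/2$ (another reason to shrink $\mathcal{N}$). Using $\|\bm{w}_{j+1}-\bm{w}_j\|\le c\|\bm{h}(\bm{w}_j)\|$ from Step 1, the partial sums telescope into a convergent dominated series, giving $\|\bm{w}_i-\bm{w}_0\|\le c\|\bm{h}(\bm{w}_0)\|$ by comparison to a geometric series dominated by its first term (this is (\ref{NewtonYSchemeThm.1})) and showing $\{\bm{w}_i\}$ is Cauchy. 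The limit $\bm{w}^*$ lies in the closed set $\Omega$ and satisfies $\bm{h}(\bm{w}^*)=\bm{0}$ by continuity, proving (\ref{NewtonYSchemeThm.0}). Finally, the same tail estimate
\begin{equation*}
\|\bm{w}_i-\bm{w}^*\|\;\le\;\sum_{j\ge i}\|\bm{w}_{j+1}-\bm{w}_j\|\;\le\;c\sum_{j\ge i}\|\bm{h}(\bm{w}_j)\|
\end{equation*}
combined with the doubly exponential decay of $\|\bm{h}(\bm{w}_j)\|$ yields (\ref{NewtonYSchemeThm.2}) after relating $2^{-2^j}$ to $2^{-2^i}$ for $j\ge i$. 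This closes the argument; the only delicate point is the first step, where LICQ must be leveraged to produce a feasible primal test point with a linear bound in $\|\bm{h}(\bm{w}_i)\|$.
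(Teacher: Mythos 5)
Your proposal follows essentially the same route as the paper's proof: construct a feasible comparison point for the subproblem satisfying the linearized constraint with $\bm{y}=\bm{0}$ and distance $O(\|\bm{h}(\bm{w}_i)\|)$ via (\textbf{LICQ}) and an implicit-function argument (the paper does this through Robinson's Theorem~2.1 applied to the system $\bm{F}_{p,[m]}=\bm{0}$ in the equality reformulation $\Omega=\{\bm{g}=\bm{0}\}$, which sidesteps the inactive-constraint bookkeeping your active-face version would need), then compare objectives to bound $\|\bm{w}_{i+1}-\bm{w}_i\|$ and $\|\bm{y}_{i+1}\|$, Taylor-expand $\bm{h}$ using $p_i^{-1/2}\le\|\bm{h}(\bm{w}_i)\|$ to get quadratic residual decrease and unit-step Armijo acceptance, and finish by induction and telescoping. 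The argument and all key estimates match the paper's.
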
 

\begin{proof}
Let $\mathcal{S} \subseteq [m] := \{ 1, 2, \ldots, m \}$ and let $\mathcal{A} := \mathcal{A} (\bm{x}^*)$, for simplicity. Define $\bm{F}_{p, \mathcal{S}}: \mathbb{R}^{2 \ell + |\mathcal{S}|} \times \mathbb{R}^{n} \to \mathbb{R}^{2 \ell + |\mathcal{S}|}$ by
\begin{align}
\bm{F}_{p,\mathcal{S}} (\bm{u}, \bm{y}, \bm{x}) = \begin{bmatrix}
\nabla \bm{h} (\bm{x}) \bm{M}_{\mathcal{S}}(\bm{x}) \bm{u} + \frac{1}{\sqrt{p}} \bm{y} + \bm{h} (\bm{x}) \\
\bm{g}_{\mathcal{S}} \left( \bm{x} + \bm{M}_{\mathcal{S}}(\bm{x}) \bm{u} \right) \\
\bm{y}
\end{bmatrix}
\end{align}
where $\bm{u} \in \mathbb{R}^{\ell + |\mathcal{S}|}$, $\bm{y} \in \mathbb{R}^{\ell}$, $\bm{M}_{\mathcal{S}}(\bm{x}) = \left[ \nabla \bm{h} (\bm{x})^{\intercal} \ \vert \ \nabla \bm{g}_{\mathcal{S}} (\bm{x})^{\intercal} \right] \in \mathbb{R}^{n \times (\ell + |\mathcal{S}|)}$, and $p \geq 1$ is a scalar. Taking $\mathcal{S} = [m]$, we claim that there exists neighborhoods $\mathcal{N}_1$ of $[\bm{0}, \bm{0}] \in \mathbb{R}^{\ell + m} \times \mathbb{R}^{\ell}$ and $\mathcal{N}_2$ of $\bm{x}^* \in \mathbb{R}^n$ such that the equation 
\begin{align}
\bm{F}_{p,[m]} (\bm{u}, \bm{y}, \bm{x}) = \bm{0} \label{reg-gen-eq.1}
\end{align}
has a unique solution $[\bm{u}, \bm{y}] = [\bm{u}_p (\bm{x}), \bm{y}_p (\bm{x})]$ in $\mathcal{N}_1$ for every $\bm{x} \in \mathcal{N}_2$. To prove this claim, we will make use of Theorem~2.1 in \cite{Robinson1980}. It follows from Corollary~3.2 in \cite{Robinson1980} that if $\nabla_{[u, y]} \bm{F}_{p,\mathcal{A}} (\bm{0}, \bm{0}, \bm{x}^*)$ is nonsingular and the Lipschitz constant of $\nabla_{[u, y]} \bm{F}_{p,\mathcal{A}} (\bm{0}, \bm{0}, \bm{x}^*)^{-1}$ is uniformly bounded for sufficiently large values of $p$ then the hypothesis of Theorem~2.1 in \cite{Robinson1980} is satisfied for equation (\ref{reg-gen-eq.1}). Evaluating the Jacobian of $\bm{F}_{p,\mathcal{A}}$ with respect to $[\bm{u}, \bm{y}]$ at $\bm{u} = \bm{0}$, $\bm{y} = \bm{0}$, and $\bm{x} = \bm{x}^*$ yields
\begin{align}
\nabla_{[u, y]} \bm{F}_{p,\mathcal{A}} (\bm{0}, \bm{0}, \bm{x}^*) 
&= \begin{bmatrix}
\nabla_{u} \bm{F}_{p,\mathcal{A}} (\bm{0}, \bm{0}, \bm{x}^*) \ \vert \ \nabla_{y} \bm{F}_{p,\mathcal{A}} (\bm{0}, \bm{0}, \bm{x}^*)
\end{bmatrix}
= \left[
\begin{array}{c|c}
\multirow{2}{*}{$\bm{M}_{\mathcal{A}}(\bm{x}^*)^{\intercal} \bm{M}_{\mathcal{A}}(\bm{x}^*)$} & \frac{1}{\sqrt{p}} \bm{I}_{\ell} \\
 & \bm{0} \\
\hline
\bm{0} & \bm{I}_{\ell}
\end{array}
\right]
\end{align}
which is nonsingular by the (\textbf{LICQ}) hypothesis and choice of $\bm{M}_{\mathcal{A}}(\bm{x})$. Note that $\bm{I}_{\ell}$ denotes the identity matrix of dimension $\ell \times \ell$. It remains to shows that the Lipschitz constant of $\nabla_{[u, y]} \bm{F}_{p,\mathcal{A}} (\bm{0}, \bm{0}, \bm{x}^*)^{-1}$ is uniformly bounded for sufficiently large values of $p$. For simplicity, define $\bm{B}_p := \nabla_{[u, y]} \bm{F}_{p,\mathcal{A}} (\bm{0}, \bm{0}, \bm{x}^*)^{-1}$, $\bm{C} := \left( \bm{M}_{\mathcal{A}}(\bm{x}^*)^{\intercal} \bm{M}_{\mathcal{A}}(\bm{x}^*)\right)^{-1}$, and 
\begin{align}
\bm{D} := \bm{C} \begin{bmatrix}
\bm{I}_{\ell} \\
\bm{0}
\end{bmatrix}
\end{align} 
and observe that 
\begin{align}
\bm{B}_p = \left[
\begin{array}{c|c}
\bm{C} & \frac{1}{\sqrt{p}} \bm{D} \\
\hline
\bm{0} & \bm{I}_{\ell}
\end{array}
\right].
\end{align}
As the desired Lipschitz constant is given by $\| \bm{B}_p \|$, it suffices to determine the singular values of $\bm{B}_p$. Since 
\begin{align}
\bm{B}_p \bm{B}_p^{\intercal} &= \left[
\begin{array}{c|c}
\bm{C} & \frac{1}{\sqrt{p}} \bm{D} \\
\hline
\bm{0} & \bm{I}_{\ell}
\end{array}
\right] \left[
\begin{array}{c|c}
\bm{C}^{\intercal} & \bm{0} \\
\hline 
\frac{1}{\sqrt{p}} \bm{D}^{\intercal} & \bm{I}_{\ell}
\end{array}
\right]
= \left[
\begin{array}{c|c}
\bm{C} \bm{C}^{\intercal} + \frac{1}{p} \bm{D} \bm{D}^{\intercal} & \frac{1}{\sqrt{p}} \bm{D} \\
\hline 
\frac{1}{\sqrt{p}} \bm{D}^{\intercal} & \bm{I}_{\ell}
\end{array}
\right]
\end{align}
we have that
\begin{align}
\lim_{p \to \infty} \bm{B}_p \bm{B}_p^{\intercal} = \left[
\begin{array}{c|c}
\bm{C} \bm{C}^{\intercal} & \bm{0} \\
\hline 
\bm{0} & \bm{I}_{\ell}
\end{array}
\right]. \label{NewtonYThm.3}
\end{align}
As the matrix in (\ref{NewtonYThm.3}) is a block diagonal matrix, its eigenvalues are the eigenvalues of $\bm{C} \bm{C}^{\intercal}$ and $\bm{I}_{\ell}$. As the nonzero singular values of $\bm{B}_p$ are the square roots of the nonzero eigenvalues of $\bm{B}_p \bm{B}_p^{\intercal}$, we conclude that the singular values of $\bm{B}_p$ are uniformly bounded for sufficiently large $p$. Hence, the hypothesis of Theorem~2.1 in \cite{Robinson1980} holds for equation (\ref{reg-gen-eq.1}). Thus, there exist neighborhoods $\mathcal{N}_1$ of $[\bm{0}, \bm{0}]$ and $\mathcal{N}_2$ of $\bm{x}^*$ such that equation (\ref{reg-gen-eq.1}) has a unique solution $[\bm{u}, \bm{y}] = [\bm{u}_p (\bm{x}), \bm{y}_p (\bm{x})]$ in $\mathcal{N}_1$ for every $\bm{x} \in \mathcal{N}_2$. Further, Theorem 2.1 of \cite{Robinson1980} yields that %this solution satisfies 
$\| [ \bm{u}_p (\bm{x}), \ \bm{y}_p (\bm{x}) ] \| 
\leq \sigma \| \bm{F}_{p,[m]} (\bm{0}, \bm{0}, \bm{x}) - \bm{F}_{p,[m]} (\bm{0}, \bm{0}, \bm{x}^*) \|$, for all $\bm{x} \in \mathcal{N}_2$ and sufficiently large $p$, where $\sigma$ is independent of $\bm{x}$ and $p$. 
It follows from the definition of $\bm{F}_{p,[m]}(\bm{u}, \bm{y}, \bm{x})$ and equation (\ref{reg-gen-eq.1}) that $\bm{y}_p(\bm{x}) = \bm{0}$ for every $\bm{x} \in \mathcal{N}_2$ which, in turn, yields
\begin{align}
\| \bm{u}_p (\bm{x}) \| 
&\leq \sigma \| \bm{F}_{p,[m]} (\bm{0}, \bm{0}, \bm{x}) - \bm{F}_{p,[m]} (\bm{0}, \bm{0}, \bm{x}^*) \|.
\end{align}
Observing that
\begin{align}
\bm{F}_{p,[m]} (\bm{0}, \bm{0}, \bm{x}) &= \begin{bmatrix}
\nabla \bm{h} (\bm{x}) \bm{M}(\bm{x}) \bm{0} + \frac{1}{\sqrt{p}} \bm{0} + \bm{h} (\bm{x}) \\
\bm{g} \left( \bm{x} + \bm{M}(\bm{x}) \bm{0} \right) \\
\bm{0}
\end{bmatrix} = \begin{bmatrix}
\bm{h} \left( \bm{x} \right) \\
\bm{g} \left( \bm{x} \right) \\
\bm{0}
\end{bmatrix}
\end{align}
and
\begin{align}
\bm{F}_{p,[m]} (\bm{0}, \bm{0}, \bm{x}^*) &= \begin{bmatrix}
\bm{h} \left( \bm{x}^* \right) \\
\bm{g} \left( \bm{x}^* \right) \\
\bm{0}
\end{bmatrix} = \begin{bmatrix}
\bm{0} \\
\bm{0} \\
\bm{0}
\end{bmatrix}
\end{align}
we now have that
\begin{align}
\| \bm{u}_p (\bm{x}) \| &\leq \sigma \left\| \begin{bmatrix}
\bm{h} \left( \bm{x} \right) \\
\bm{g} \left( \bm{x} \right)
\end{bmatrix} \right\|, \label{NewtonYThm.4}
\end{align}
for all $\bm{x} \in \mathcal{N}_2$.

Now let $\eta >0$ be given such that $\bm{h}, \bm{g} \in \mathcal{C}^2 (\mathcal{B} (\bm{x}^*, \eta))$ and $\mathcal{B} (\bm{x}^*, \eta) \subset \mathcal{N}_2$. Define $\mathcal{B} := \mathcal{B} (\bm{x}^*, \eta)$. Let
\begin{align}
\eta &= \max \left\{ \max_{\bm{x} \in \mathcal{B}} \| \bm{M}(\bm{x}) \|, 1 \right\}, \\
\delta &= \left[ \sum_{j = 1}^{\ell} \max_{\bm{x}_j \in \mathcal{B}} \| \nabla^2 \bm{h}_j ( \bm{x}_j ) \|^2 \right]^{1/2},
\end{align} 
and let $\bm{w}_0 \in \mathcal{B} \cap \Omega$ be any point such that both
\begin{align}
\eta \sigma \gamma \| \bm{h}(\bm{w}_0) \| \leq \min \left\{ 1/2, 1 - \tau, \gamma (1 - \alpha) \right\} \label{NewtonYThm.4.1}
\end{align}
and $\mathcal{B} ( \bm{w}_0, 2 \eta \sigma \gamma \| \bm{h}(\bm{w}_0) \|) \subseteq \mathcal{B} \cap \Omega$, where $\displaystyle \gamma = \left( 1 + \frac{\delta \eta \sigma}{2} \right)$.

Suppose that $\bm{w}_i \in \mathcal{B} \cap \Omega$ and that $\bm{w}_{i+1}$ is generated using the scheme in (\ref{NewtonYScheme}) with $p_i$ chosen such that $p_i \geq \max \left\{ \beta^2, \| \bm{h} (\bm{w}_i) \|^{-2} \right\}$, for some $\beta > 1$, and with $s_i = 1$. By our definition of $\bm{F}_{p,[m]} (\bm{u}, \bm{y}, \bm{x})$ and the fact that $\bm{F}_{p_i,[m]} (\bm{u}_{p_i} (\bm{w}_i), \bm{0}, \bm{w}_i) = \bm{0}$, we have that 
\begin{align}
\nabla \bm{h} (\bm{w}_i) \bm{M}(\bm{w}_i) \bm{u}_{p_i} (\bm{w}_i) = - \bm{h} (\bm{w}_i)
\end{align}
and
\begin{align}
\bm{g} \left(\bm{w}_i + \bm{M}(\bm{w}_i) \bm{u}_{p_i} (\bm{w}_i) \right) = \bm{0}
\ \ \ \ \Longleftrightarrow \ \ \ \ 
\bm{w}_i + \bm{M}(\bm{w}_i) \bm{u}_{p_i} (\bm{w}_i) \in \Omega.
\label{NewtonYThm.4.1.0}
\end{align}
Hence, the vector 
$\begin{bmatrix} \bm{w}_i + \bm{M}(\bm{w}_i) \bm{u}_{p_i} (\bm{w}_i), \ \bm{0} \end{bmatrix}^{\intercal}$ satisfies the constraints of the minimization problem in (\ref{NewtonYScheme}). Thus, the objective value of problem (\ref{NewtonYScheme}) at a solution, $\bm{w}_{i+1}$, is bounded above by the objective evaluated at $\begin{bmatrix} \bm{w}_i + \bm{M}(\bm{w}_i) \bm{u}_{p_i} (\bm{w}_i), \ \bm{0} \end{bmatrix}^{\intercal}$. Using this fact with the definition of $\eta$ and inequality (\ref{NewtonYThm.4}) we have that
\begin{align}
\| \bm{w}_{i+1} - \bm{w}_i \|^2 + \| \bm{y}_{i+1} \|^2 
&\leq \| \bm{M}(\bm{w}_i) \bm{u}_{p_i} (\bm{w}_i) \|^2
\leq \eta^2 \| \bm{u}_{p_i} (\bm{w}_i) \|^2
%\leq \eta^2 \sigma^2 \left\| \begin{bmatrix}
%\bm{h} \left( \bm{w}_i \right) \\
%\bm{g} \left( \bm{w}_i \right)
%\end{bmatrix} \right\|
\leq \eta^2 \sigma^2 \left\| \bm{h}(\bm{w}_i) \right\|^2. \label{NewtonYThm.4.1.1}
\end{align}
In particular, recalling that $\alpha_i := 1 - \| \bm{y}_i \|$, it follows from (\ref{NewtonYThm.4.1.1}) that
\begin{align}
1 - \eta \sigma \| \bm{h} (\bm{w}_i) \| \leq \alpha_{i+1}. \label{NewtonYThm.4.1.2}
\end{align}

Now suppose that $\bm{w}_i, \bm{w}_{i+1} \in \mathcal{B}$ where $\bm{w}_{i+1}$ is generated using the scheme in (\ref{NewtonYScheme}) with $p_i$ chosen such that $p_i \geq \max \left\{ \beta^2, \| \bm{h} (\bm{w}_i) \|^{-2} \right\}$, for some $\beta > 1$, and with $s_i = 1$. In particular, since $\bm{w}_i$ is known at the time of choosing $p_i$ we can take
\begin{align}
p_i = \max \left\{ \beta^2, \frac{1}{\| \bm{h}(\bm{w}_i) \|^2} \right\} \label{NewtonYThm.4.2}
\end{align}
for some $\beta \geq 1$ to satisfy this requirement. By Taylor's Theorem and the problem constraint, we have that 
\begin{align}
\bm{h}(\bm{w}_{i+1}) &= \bm{h} (\bm{w}_i) + \nabla \bm{h} (\bm{w}_i) (\bm{w}_{i+1} - \bm{w}_i) + \frac{1}{2} \sum_{j = 1}^{\ell} ( \bm{w}_{i+1} - \bm{w}_i )^{\intercal} \nabla^2 \bm{h}_j (\bm{\xi}_j) ( \bm{w}_{i+1} - \bm{w}_i ) \bm{e}_j, \\
&= - \frac{1}{\sqrt{p_i}} \bm{y}_{i+1} + \frac{1}{2} \sum_{j = 1}^{\ell} ( \bm{w}_{i+1} - \bm{w}_i )^{\intercal} \nabla^2 \bm{h}_j (\bm{\xi}_j) ( \bm{w}_{i+1} - \bm{w}_i ) \bm{e}_j,
\end{align}
for some vectors $\bm{\xi}_j$ between $\bm{w}_i$ and $\bm{w}_{i+1}$, for $1 \leq j \leq \ell$. This yields the inequality
\begin{align}
\| \bm{h}(\bm{w}_{i+1}) \| \leq \frac{1}{\sqrt{p_i}} \| \bm{y}_{i+1} \| + \frac{\delta}{2} \| \bm{w}_{i+1} - \bm{w}_i \|^2. \label{NewtonYThm.5}
\end{align}
Combining (\ref{NewtonYThm.5}) with inequality (\ref{NewtonYThm.4.1.1}) yields
\begin{align}
\| \bm{h}(\bm{w}_{i+1}) \| \leq \frac{ \eta \sigma}{\sqrt{p_i}} \left\| \bm{h}(\bm{w}_i) \right\| + \frac{ \delta \eta^2 \sigma^2}{2} \left\| \bm{h}(\bm{w}_i) \right\|^2. \label{NewtonYThm.6}
\end{align}
By our choice of $p_i$ in (\ref{NewtonYThm.4.2}), we have that $\frac{1}{\sqrt{p_i}} \leq \| \bm{h} (\bm{w}_i) \|$. Combining this with (\ref{NewtonYThm.6}) yields
\begin{align}
\| \bm{h}(\bm{w}_{i+1}) \| \leq \eta \sigma \left( 1 + \frac{\delta \eta \sigma}{2} \right) \left\| \bm{h}(\bm{w}_i) \right\|^2 = \eta \sigma \gamma \left\| \bm{h}(\bm{w}_i) \right\|^2, \label{NewtonYThm.7}
\end{align}
which concludes the proof of (\ref{NewtonYSchemeThm.3}).

It remains to show that the sequence of iterates $\{ \bm{w}_i \}_{i = 0}^{\infty}$ generated by (\ref{NewtonYScheme}) is contained in the ball $\mathcal{B}$ and that $\alpha_i \geq \alpha$, for all $i \geq 1$. We proceed by way of induction. First, $\bm{w}_0 \in \mathcal{B}$ by definition. Additionally, using (\ref{NewtonYThm.4.1.2}) and (\ref{NewtonYThm.4.1}) we have that
\begin{align}
\alpha_1
\geq 1 - \eta \sigma \| \bm{h} (\bm{w}_0) \|
\geq \alpha.
\end{align} 
Hence, the base case of induction holds. Suppose now that $\bm{w}_0, \bm{w}_1, \ldots, \bm{w}_i$ are contained in $\mathcal{B}$, that $\| \bm{w}_{i} - \bm{w}_{i-1} \| \leq \eta \sigma \| \bm{h}( \bm{w}_0 ) \| 2^{-2^{i-1} + 1}$, and that $s_j = 1$ and $\alpha_{j+1} \geq 0$, for $0 \leq j < i$. As the inequality in (\ref{NewtonYThm.7}) holds for $\bm{w}_0, \bm{w}_1, \ldots, \bm{w}_i$ by the inductive hypothesis, repeated applications of (\ref{NewtonYThm.7}) yield
\begin{align}
\| \bm{w}_{i+1} - \bm{w}_i \| &\leq \eta \sigma \left\| \bm{h}(\bm{w}_i) \right\| \leq \frac{1}{\gamma} \left( \eta \sigma \gamma \| \bm{h}( \bm{w}_0 ) \| \right)^{2^i} = \eta \sigma \| \bm{h}( \bm{w}_0 ) \| \left( \eta \sigma \gamma \| \bm{h}( \bm{w}_0 ) \| \right)^{2^i - 1}. \label{NewtonYThm.8}
\end{align}
By our choice of $\bm{w}_0$ in (\ref{NewtonYThm.4.1}), it follows from (\ref{NewtonYThm.8}) that
\begin{align}
\| \bm{w}_{i+1} - \bm{w}_i \| &\leq \eta \sigma \left\| \bm{h}(\bm{w}_0) \right\| 2^{-2^i + 1}, \label{NewtonYThm.8.1}
\end{align}
as desired. Next, observe that (\ref{NewtonYSchemeThm.1}) follows from applying (\ref{NewtonYThm.8.1}), the inductive hypothesis, and the triangle inequality
\begin{align}
\| \bm{w}_{i+1} - \bm{w}_0 \| = \| \bm{w}_{i+1} - \bm{w}_i + \bm{w}_i - \bm{w}_0 \| &\leq \| \bm{w}_{i+1} - \bm{w}_i \| + \| \bm{w}_{i} - \bm{w}_0 \| \\
&\leq \sum_{j = 0}^{i} \| \bm{w}_{j+1} - \bm{w}_j \| \\
&\leq \eta \sigma \left\| \bm{h}(\bm{w}_0) \right\| \sum_{j = 0}^{i} 2^{-2^j + 1} \\
&< 2 \eta \sigma \left\| \bm{h}(\bm{w}_0) \right\|.
\end{align}
Additionally, observe that
\begin{align}
\| \bm{h}(\bm{w}_{i+1}) \| &\leq \eta \sigma \| \bm{h}(\bm{w}_i) \|^2 \leq \left( \eta \sigma \| \bm{h}(\bm{w}_0) \| \right)^{2^i} \| \bm{h}(\bm{w}_i) \| \leq \left( 1 - \tau \right) \| \bm{h}(\bm{w}_i) \|, \label{NewtonYThm.9}
\end{align}
where the final inequality holds from our choice of $\bm{w}_0$ in (\ref{NewtonYThm.4.1}). By our choice of $\bm{w}_0$ in (\ref{NewtonYThm.4.1}), we have that
\begin{align}
\eta \sigma \| \bm{h} (\bm{w}_0) \| 2^{-2^i + 1} \leq 1 - \alpha, \label{NewtonYThm.9.1}
\end{align}
for all $i \geq 0$. Hence, using (\ref{NewtonYThm.4.1.2}), (\ref{NewtonYThm.8}), and (\ref{NewtonYThm.9.1}) yields
\begin{align}
\alpha_{i+1} 
&\geq 1 - \eta \sigma \| \bm{h} (\bm{w}_i) \| %\tag{Using (\ref{NewtonYThm.4.1.2})} \\
\geq 1 - \eta \sigma \| \bm{h} (\bm{w}_0) \| 2^{-2^i + 1} %\tag{Using (\ref{NewtonYThm.8})} \\
\geq \alpha, %\tag{Using (\ref{NewtonYThm.9.1})}
\end{align}
as desired. As $\alpha_{i+1} \leq 1$ by definition, we have $\alpha_{i+1} \in [\alpha, 1]$. Hence, $1 - \tau \leq 1 - \tau \alpha_i \leq 1 - \tau \alpha$. Combining this with (\ref{NewtonYThm.9}) yields that the Armijo line search update in (\ref{NewtonArmijo}) is satisfied for $s_i = 1$. This concludes the inductive phase of the proof.

Lastly, as $1 - \tau \leq 1 - \tau \alpha$ we have that
\begin{align}
\| \bm{h}(\bm{w}_{i+1}) \| 
&\leq \left( 1 - \tau \alpha \right) \| \bm{h}(\bm{w}_i) \|, \label{NewtonYThm.10}
\end{align}
holds for all $i \geq 0$. Hence the sequence $\{ \bm{w}_i \}_{i = 0}^{\infty}$ is Cauchy so there exists a vector $\bm{w}^* \in \mathcal{B}$ such that $\bm{w}_i \to \bm{w}^*$ as $i \to \infty$ which, combined with (\ref{NewtonYThm.9}), yields the inequality in (\ref{NewtonYSchemeThm.2}). Lastly, as $\tau \in (0,1)$, it follows from (\ref{NewtonYThm.10}) that $\| \bm{h} (\bm{w}^*) \| = 0$. As $\bm{g}(\bm{w}_i) = \bm{0}$ for all $i \geq 0$, we conclude that (\ref{NewtonYSchemeThm.0}) holds.
\end{proof}

\subsection{Convergence Analysis for Multiplier Step} \label{subsec:multiplier-step-analysis} 
We now work towards establishing local convergence for the multiplier step in the LS algorithm. Before stating and proving the local convergence result for the multiplier step, we require seceral results that will be used in the proof. First, we restate Theorem~3.1 and Corollary~3.1 from \cite{Hager1993} in terms of our current analysis. Note that there are no significant changes to the details of these results and the same proofs from \cite{Hager1993} holds. As such, the proof is omitted here. %% NOTE: This result actually uses LICQ instead of LI like the old result. However, based on our analysis for Theorem 4.1 the idea used there can be carried over to the original proof of THeorem 4.2. 

\begin{theorem} \label{Theorem3.1}
Suppose $\bm{x}^*$ satisfies $\bm{h}(\bm{x}^*) = \bm{0}$, $\bm{x}^* \in \Omega$, $\bm{h} \in \mathcal{C}^2$ near $\bm{x}^*$, and assumptions (\textbf{LICQ}) and (\textbf{SOSC}) hold.
Then there exists a neighborhood $\mathcal{N}$ about $(\bm{x}^*, \bm{\lambda}^*)$ for which the problem 
\begin{align}
\min \left\{ \mathcal{L}_{p_i}(\bm{z}, \bm{\nu}_i) : \nabla \bm{h} (\bm{u}_i) (\bm{z} - \bm{u}_i) = \bm{0}, \bm{z} \in \Omega \right\}
\end{align}
has a local minimizer $\bm{z} = \bm{z}_{i+1}$, whenever $(\bm{u}_i, \bm{\nu}_i) \in \mathcal{N}$. Moreover, there exists a constant $c \in \mathbb{R}$ such that
\begin{align}
\| \bm{z}_{i+1} - \bm{x}^* \| \leq c \| \bm{\nu}_i - \bm{\lambda}^* \|^2 + c \| \bm{u}_i - \bm{x}^* \|^2 + c \| \bm{h}(\bm{u}_i) \|,
\end{align} 
for every $(\bm{u}_i, \bm{\nu}_i) \in \mathcal{N}$ with $\bm{u}_i \in \Omega$.
\end{theorem}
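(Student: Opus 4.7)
The plan is to apply a parametric implicit function argument to the first-order optimality system of the subproblem, treating $(\bm{u}_i, \bm{\nu}_i)$ as the parameter and $(\bm{z}_{i+1}, \text{multipliers})$ as the unknowns. Introducing a multiplier $\bm{\rho}$ for the linearized equality $\nabla \bm{h}(\bm{u}_i)(\bm{z} - \bm{u}_i) = \bm{0}$ and a multiplier $\bm{\eta}$ for $\bm{z} \in \Omega$, the KKT conditions of the subproblem take the form
\begin{align*}
\nabla_{\bm{z}} \mathcal{L}_{p_i}(\bm{z}, \bm{\nu}_i) + \bm{\rho}^{\intercal} \nabla \bm{h}(\bm{u}_i) + \bm{\eta}^{\intercal} \bm{A} &= \bm{0}, \\
\nabla \bm{h}(\bm{u}_i)(\bm{z} - \bm{u}_i) &= \bm{0}, \\
\bm{r}(\bm{z}) \leq \bm{0}, \ \bm{\eta} \geq \bm{0}, \ \bm{\eta}^{\intercal} \bm{r}(\bm{z}) &= 0,
\end{align*}
and at the nominal data $(\bm{u}_i, \bm{\nu}_i) = (\bm{x}^*, \bm{\lambda}^*)$ the triple $(\bm{x}^*, \bm{0}, \bm{\mu}^*)$ solves this system because $\bm{h}(\bm{x}^*) = \bm{0}$ absorbs the penalty contribution and $(\bm{x}^*, \bm{\lambda}^*, \bm{\mu}^*)$ is KKT for problem (\ref{prob:main-nlp}).

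The next step is to verify strong regularity of this generalized equation at the nominal solution, which is precisely the combination of (\textbf{LICQ}) (full row rank of the active constraint Jacobian) and (\textbf{SOSC}) (positive definiteness of the Lagrangian Hessian on the critical cone). Invoking a Robinson-type generalized implicit function theorem, in the spirit of Theorem~2.1 of \cite{Robinson1980} as already used in the proof of Theorem~\ref{NewtonYSchemeThm}, then yields the existence of a unique, Lipschitz-continuous local KKT triple $(\bm{z}_{i+1}, \bm{\rho}_{i+1}, \bm{\eta}_{i+1})$ for every $(\bm{u}_i, \bm{\nu}_i)$ in some neighborhood $\mathcal{N}$ of $(\bm{x}^*, \bm{\lambda}^*)$; SOSC upgrades this stationary point to a local minimizer.

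For the quantitative bound, I would subtract the perturbed KKT system from the nominal one and Taylor expand about $(\bm{x}^*, \bm{\lambda}^*)$. Three natural residuals appear: (i) the linearized equality $\nabla \bm{h}(\bm{u}_i)(\bm{z} - \bm{u}_i) = \bm{0}$ differs from $\bm{h}(\bm{z}) = \bm{0}$ by a constant piece $\bm{h}(\bm{u}_i)$, giving the $c\|\bm{h}(\bm{u}_i)\|$ contribution; (ii) evaluating $\nabla \bm{h}$ at $\bm{u}_i$ rather than at $\bm{x}^*$ yields an error that, after a second-order Taylor cancellation using $\bm{h} \in \mathcal{C}^2$, scales as $\|\bm{u}_i - \bm{x}^*\|^2$; (iii) replacing $\bm{\lambda}^*$ by $\bm{\nu}_i$ in $\nabla_{\bm{z}} \mathcal{L}_{p_i}$ adds a term in $\bm{\nu}_i - \bm{\lambda}^*$. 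Inverting the strongly regular KKT operator then translates each residual into a bound on $\|\bm{z}_{i+1} - \bm{x}^*\|$.

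The principal obstacle is sharpening the naive linear bound $c\|\bm{\nu}_i - \bm{\lambda}^*\|$ in contribution (iii) into the claimed quadratic bound $c\|\bm{\nu}_i - \bm{\lambda}^*\|^2$. This is where the augmented Lagrangian structure of $\mathcal{L}_{p_i}$ is decisive: the penalty portion of $\nabla_{\bm{z}} \mathcal{L}_{p_i}$ acts at the subproblem optimum as an implicit first-order correction of $\bm{\nu}_i$ toward $\bm{\lambda}^*$. Decomposing $\bm{\nu}_i - \bm{\lambda}^*$ into components along and orthogonal to the range of $\nabla \bm{h}(\bm{x}^*)^{\intercal}$, the tangential piece is annihilated by the linearized equality constraint while the normal piece is absorbed by the penalty, leaving only a quadratic cross term. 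Carrying out this decomposition and tracking the cancellation is the delicate computation at the heart of the argument, but it is essentially the calculation performed in the proof of Theorem~3.1 of \cite{Hager1993}, to which I would defer for the routine algebraic details.
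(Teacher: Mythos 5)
The paper does not actually prove this statement: it is a restatement of Theorem~3.1 of \cite{Hager1993}, and the authors explicitly write that ``the same proofs from \cite{Hager1993} hold'' and omit the argument. So there is no in-paper proof to match your sketch against; what can be judged is whether your outline would stand on its own, and there are two concrete problems with it.

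First, you assert that strong regularity of the subproblem's KKT system at $(\bm{x}^*,\bm{0},\bm{\mu}^*)$ ``is precisely the combination of (\textbf{LICQ}) and (\textbf{SOSC}).'' That is not true, and the paper itself says so in the proof of Lemma~\ref{lem:Lem13.1}: Robinson's strong regularity requires either (\textbf{LICQ}) together with (\textbf{SCS}) and (\textbf{SOSC}), or (\textbf{LICQ}) together with (\textbf{SSOSC}). Theorem~\ref{Theorem3.1} assumes only (\textbf{LICQ}) and (\textbf{SOSC}), so without strict complementarity the KKT solution map of your generalized equation need not be single-valued and Lipschitz, and Theorem~2.1 of \cite{Robinson1980} does not apply as you invoke it. The conclusion of Theorem~\ref{Theorem3.1} only asserts existence of a local minimizer and a bound on the \emph{primal} error, which is weaker than what strong regularity delivers; the argument in \cite{Hager1993} gets this from a second-order growth (coercivity) property of the penalized Lagrangian on the linearized constraint set rather than from full strong regularity, which is why the weaker hypotheses suffice. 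Your route would either need the stronger hypotheses (which the theorem does not grant) or needs to be replaced by that coercivity argument. Second, you correctly identify the quadratic dependence on $\|\bm{\nu}_i - \bm{\lambda}^*\|$ as ``the delicate computation at the heart of the argument,'' but you then defer it entirely to \cite{Hager1993} as a ``routine algebraic detail.'' It is not routine --- a naive residual-inversion argument gives only a linear bound in $\|\bm{\nu}_i-\bm{\lambda}^*\|$, and the cancellation you gesture at (the penalty term acting as an implicit multiplier correction) is the entire content of the theorem. As written, the proposal neither establishes the existence claim under the stated hypotheses nor proves the error estimate; it reduces both to the same external reference the paper already cites.
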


%Next, we restate Corollary 3.1 from \cite{Hager1993} in terms of our current analysis (note that there are no changes to the details of the result and the same proof from \cite{Hager1993} holds).

\begin{corollary} \label{Corollary3.1}
Suppose $\bm{x}^*$ satisfies $\bm{h} (\bm{x}^*) = \bm{0}$, $\bm{x}^* \in \Omega$, $\bm{h} \in \mathcal{C}^2$ near $\bm{x}^*$, and assumptions (\textbf{LICQ}) and (\textbf{SOSC}) hold.
Then there exists a neighborhood $\mathcal{N}$ about $(\bm{x}^*, \bm{\lambda}^*)$ for which the problem 
\begin{align}
\min \left\{ \mathcal{L}_{p_i}(\bm{z}, \bm{\nu}_i) : \nabla \bm{h} (\bm{u}_i) (\bm{z} - \bm{u}_i) = \bm{0}, \bm{z} \in \Omega \right\}
\end{align}
has a local minimizer $\bm{z} = \bm{z}_{i+1}$, whenever $(\bm{x}_k, \bm{\nu}_i) \in \mathcal{N}$, $\bm{x}_k \in \Omega$, and any $\bm{u}_i = \bm{w}_j$ for any $j \geq 1$, where $\bm{w}_j$ are generated by the scheme in (\ref{NewtonYScheme}) starting from $\bm{w}_0 = \bm{x}_k$. Moreover, there exists a constant $c \in \mathbb{R}$ such that
\begin{align}
\| \bm{z}_{i+1} - \bm{x}^* \| \leq c \| \bm{\nu}_i - \bm{\lambda}^* \|^2 + c \| \bm{x}_k - \bm{x}^* \|^2,
\end{align} 
where $c$ is independent of $(\bm{x}_k, \bm{\nu}_i) \in \mathcal{N} \cap \left( \Omega \times \mathbb{R}^m \right)$.
\end{corollary}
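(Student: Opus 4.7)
The plan is to deduce Corollary~\ref{Corollary3.1} from Theorem~\ref{Theorem3.1} by substituting $\bm{u}_i = \bm{w}_j$ and then eliminating the two terms $\|\bm{u}_i - \bm{x}^*\|^2$ and $\|\bm{h}(\bm{u}_i)\|$ in favor of a single $\|\bm{x}_k - \bm{x}^*\|^2$ term, using the conclusions of Theorem~\ref{NewtonYSchemeThm} applied to the constraint-step sequence starting from $\bm{w}_0 = \bm{x}_k$.

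First, I would coordinate the relevant neighborhoods. Let $\mathcal{N}_T$ be the neighborhood of $(\bm{x}^*, \bm{\lambda}^*)$ supplied by Theorem~\ref{Theorem3.1} and let $\mathcal{N}_N$ be the neighborhood of $\bm{x}^*$ supplied by Theorem~\ref{NewtonYSchemeThm}. I would choose $\mathcal{N}$ about $(\bm{x}^*, \bm{\lambda}^*)$ small enough that whenever $(\bm{x}_k, \bm{\nu}_i) \in \mathcal{N}$ with $\bm{x}_k \in \Omega$, one has $\bm{x}_k \in \mathcal{N}_N$, the quantity $c \|\bm{h}(\bm{x}_k)\|$ is strictly less than one (so that iterations of the quadratic recursion (\ref{NewtonYSchemeThm.3}) stay uniformly controlled in $j$), and every iterate $\bm{w}_j$ produced from $\bm{w}_0 = \bm{x}_k$ satisfies $(\bm{w}_j, \bm{\nu}_i) \in \mathcal{N}_T$. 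The last inclusion will follow from the bound $\|\bm{w}_j - \bm{x}^*\| \leq c\|\bm{x}_k - \bm{x}^*\|$ derived in the next step.

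Next, I would invoke the estimates of Theorem~\ref{NewtonYSchemeThm}. With $\bm{w}_0 = \bm{x}_k$, inequality (\ref{NewtonYSchemeThm.1}) reads $\|\bm{w}_j - \bm{x}_k\| \leq c\|\bm{h}(\bm{x}_k)\|$. Since $\bm{h}(\bm{x}^*) = \bm{0}$ and $\bm{h} \in \mathcal{C}^2$ near $\bm{x}^*$, a first-order Taylor expansion gives $\|\bm{h}(\bm{x}_k)\| \leq c\|\bm{x}_k - \bm{x}^*\|$, and the triangle inequality yields
\begin{align*}
\|\bm{w}_j - \bm{x}^*\| \leq \|\bm{w}_j - \bm{x}_k\| + \|\bm{x}_k - \bm{x}^*\| \leq c\|\bm{x}_k - \bm{x}^*\|.
\end{align*}
Squaring handles the $\|\bm{u}_i - \bm{x}^*\|^2$ term of Theorem~\ref{Theorem3.1}. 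For the $\|\bm{h}(\bm{u}_i)\|$ term, inequality (\ref{NewtonYSchemeThm.3}) iterated $j \geq 1$ times yields $c\|\bm{h}(\bm{w}_j)\| \leq (c\|\bm{h}(\bm{x}_k)\|)^{2^j}$. Since $c\|\bm{h}(\bm{x}_k)\| < 1$ by the choice of $\mathcal{N}$, this is dominated uniformly in $j \geq 1$ by $c\|\bm{h}(\bm{x}_k)\|^2 \leq c\|\bm{x}_k - \bm{x}^*\|^2$, which is quadratic in the distance from $\bm{x}^*$ as required.

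Inserting both estimates into the bound from Theorem~\ref{Theorem3.1} gives
\begin{align*}
\|\bm{z}_{i+1} - \bm{x}^*\| \leq c\|\bm{\nu}_i - \bm{\lambda}^*\|^2 + c\|\bm{w}_j - \bm{x}^*\|^2 + c\|\bm{h}(\bm{w}_j)\| \leq c\|\bm{\nu}_i - \bm{\lambda}^*\|^2 + c\|\bm{x}_k - \bm{x}^*\|^2,
\end{align*}
which is the claimed inequality with a constant independent of $(\bm{x}_k, \bm{\nu}_i) \in \mathcal{N} \cap (\Omega \times \mathbb{R}^m)$ and of the index $j$. The main obstacle is largely bookkeeping: one must ensure the generic constants at each step are uniform in $j$, which requires genuinely exploiting the quadratic (rather than merely linear) decay furnished by (\ref{NewtonYSchemeThm.3}), and the neighborhood $\mathcal{N}$ has to be shrunk once at the outset so that Theorem~\ref{Theorem3.1} applies at the moving point $\bm{w}_j$ rather than only at $\bm{x}_k$ itself.
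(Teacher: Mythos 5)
Your proposal is correct and follows essentially the same route as the paper's argument: apply Theorem~\ref{Theorem3.1} at $\bm{u}_i = \bm{w}_j$, use (\ref{NewtonYSchemeThm.1}) with a first-order Taylor expansion of $\bm{h}$ at $\bm{x}^*$ to get $\|\bm{w}_j - \bm{x}^*\| \leq c\|\bm{x}_k - \bm{x}^*\|$, and use (\ref{NewtonYSchemeThm.3}) to bound $\|\bm{h}(\bm{w}_j)\| \leq c\|\bm{h}(\bm{x}_k)\|^2 \leq c\|\bm{x}_k - \bm{x}^*\|^2$. The extra care you take about uniformity in $j$ and shrinking the neighborhood so Theorem~\ref{Theorem3.1} applies at the moving point $\bm{w}_j$ is sensible bookkeeping that the paper leaves implicit.
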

\iffalse
\begin{proof}
As $\bm{h} (\bm{x}^*) = \bm{0}$ we have that
\begin{align}
\| \bm{h}(\bm{x}_k)  \| = \| \bm{h}(\bm{x}_k) - \bm{h}(\bm{x}^*) \| = \| \nabla \bm{h}(\bm{\xi}) (\bm{x}_k - \bm{x}^*) \| \leq c \| \bm{x}_k - \bm{x}^* \|, \label{Corollary.1}
\end{align}
where $\bm{\xi}$ is some vector between $\bm{x}_k$ and $\bm{x}^*$. Using (\ref{NewtonYSchemeThm.1}) from Theorem~\ref{NewtonYSchemeThm} with $\bm{w}_0 = \bm{x}_k$, we have that $\| \bm{w}_j - \bm{x}_k \| \leq c \| \bm{h}(\bm{x}_k) \|$, for all $j \geq 1$. As $\bm{u}_i = \bm{w}_j$ for some $1 \leq j \leq i_C$, it follows that $\| \bm{u}_i - \bm{x}_k \| \leq c \| \bm{h}(\bm{x}_k) \|$. Combining this with (\ref{Corollary.1}) and the triangle inequality yields
\begin{align*}
\| \bm{u}_i - \bm{x}^* \| 
&\leq \| \bm{u}_i - \bm{x}_k \| + \| \bm{x}_k - \bm{x}^* \| 
\leq c \| \bm{x}_k - \bm{x}^* \|.
\end{align*}
Lastly, (\ref{NewtonYSchemeThm.3}) from Theorem~\ref{NewtonYSchemeThm} with $\bm{w}_0 = \bm{x}_k$ yields that $\| \bm{h}(\bm{u}_i) \| \leq c \| \bm{h}(\bm{x}_k) \|^2$ which combined with (\ref{Corollary.1}) yields that $\| \bm{h}(\bm{u}_i) \| \leq c \| \bm{x}_k - \bm{x}^* \|^2$. Making these substitutions into Theorem~\ref{Theorem3.1} concludes the proof.
\end{proof}
\fi

In order to prove the local convergence result for the multiplier step, we make use of the following problem:
\begin{align}
\begin{array}{cc}
\displaystyle \min_{\bm{x} \in \mathbb{R}^{n}} & f(\bm{x}) \\
\text{s.t.} & \bm{h}(\bm{x}) = \bm{u}, \ \ \bm{r} (\bm{x}) \leq \bm{0},
\end{array} \label{prob:Ru}
\end{align}
%%%%%%%%%%%%%%%%%%%%%%%%%%%%%%%%%%%
\iffalse
\begin{align}
\begin{array}{cc}
\displaystyle \min_{\bm{x}} & f(\bm{x}) \\
\text{s.t.} & \bm{h} (\bm{x}) = \bm{u}, \ \ \bm{g} (\bm{x}) = \bm{0},
\end{array} \label{prob:Gu}
\end{align}
and
\fi
%%%%%%%%%%%%%%%%%%%%%%%%%%%%%%%%%%%
where $\bm{u}$ is a fixed vector in $\mathbb{R}^m$. Note that (\ref{prob:Ru}) is very similar to problem (\ref{prob:main-nlp}) with the only difference being the perturbation applied to the nonlinear portion of the constraint set, namely changing $\bm{h} (\bm{x}) = \bm{0}$ to $\bm{h} (\bm{x}) = \bm{u}$. Lemma~\ref{lem:Lem13.1} will provide insight into how close a solution of problem (\ref{prob:Ru}) is to a solution of (\ref{prob:main-nlp}) for different values of $\bm{u}$ by performing stability analysis of problem (\ref{prob:Ru}) for values of $\bm{u}$ in a neighborhood of $\bm{u} = \bm{0}$. 
%TODO
%We note that Lemma~\ref{lem:Lem13.1} has the same conclusions as Lemma 13.1 from \cite{Hager1993}, however, the difference here being that the hypotheses are more general. This is accomplished by making use of strong regularity as discussed by Robinson in \cite{Robinson1980}. This difference will allow us to clearly justify alternate sets of assumptions when establishing convergence results for the multiplier step of NPASA. 

\begin{lemma} \label{lem:Lem13.1}
Suppose that $f, \bm{h}$ from problem (\ref{prob:main-nlp}) are twice continuously differentiable and that %the KKT system for problem (\ref{prob:Ru}) is strongly regular at a KKT point $(\bm{x}^*, \bm{\lambda}^*, \bm{\mu}^*)$. 
either assumptions (\textbf{LICQ}), (\textbf{SCS}), and (\textbf{SOSC}) are satisfied at $(\bm{x}^*, \bm{\lambda}^*, \bm{\mu}^*)$ or assumptions (\textbf{LICQ}) and (\textbf{SSOSC}) are satisfied at $(\bm{x}^*, \bm{\lambda}^*, \bm{\mu}^*)$. Then there exists a neighborhood $\mathcal{N}$ of $\bm{0}$ such that, for every $\bm{u} \in \mathcal{N}$, problem (\ref{prob:Ru}) has a local minimizer $\bm{x}(\bm{u})$ and associated multipliers $\bm{\lambda} (\bm{u})$ and $\bm{\mu} (\bm{u})$ satisfying
\iffalse
\begin{itemize}
\item[(\ref{lem:Lem13.1}.1)] $\begin{pmatrix} \bm{x}(\bm{0}), \ \bm{\lambda} (\bm{0}), \ \bm{\mu} (\bm{0}) \end{pmatrix} = \left( \bm{x}^*, \bm{\lambda}^*, \bm{\mu}^* \right)$.
\item[(\ref{lem:Lem13.1}.2)] $\begin{pmatrix} \bm{x}(\bm{u}), \ \bm{\lambda} (\bm{u}), \ \bm{\mu} (\bm{u}) \end{pmatrix}$ satisfies the KKT system for problem (\ref{prob:Ru}).
\end{itemize}
\fi
\begin{align}
&\begin{pmatrix} \bm{x}(\bm{0}), \ \bm{\lambda} (\bm{0}), \ \bm{\mu} (\bm{0}) \end{pmatrix} = \left( \bm{x}^*, \bm{\lambda}^*, \bm{\mu}^* \right),\label{lem:Lem13.1.1} \\
&\begin{pmatrix} \bm{x}(\bm{u}), \ \bm{\lambda} (\bm{u}), \ \bm{\mu} (\bm{u}) \end{pmatrix} \ \text{satisfies the KKT system for problem (\ref{prob:Ru})}. \label{lem:Lem13.1.2}
\end{align}
Additionally, there exists a constant $c \in \mathbb{R}$ such that, for every $\bm{u}_1, \bm{u}_2 \in \mathcal{N}$, we have
\iffalse
\begin{itemize}
\item[(\ref{lem:Lem13.1}.3)] $\displaystyle \left\| \left(
\bm{x} (\bm{u}_1), \bm{\lambda} (\bm{u}_1), \bm{\mu} (\bm{u}_1) 
\right) - \left(
\bm{x} (\bm{u}_2), \bm{\lambda} (\bm{u}_2), \bm{\mu} (\bm{u}_2)
\right) \right\| \leq c \| \bm{u}_1 - \bm{u}_2 \|$.
\end{itemize}
\fi
\begin{align}
\left\| \left(
\bm{x} (\bm{u}_1), \bm{\lambda} (\bm{u}_1), \bm{\mu} (\bm{u}_1) 
\right) - \left(
\bm{x} (\bm{u}_2), \bm{\lambda} (\bm{u}_2), \bm{\mu} (\bm{u}_2)
\right) \right\| \leq c \| \bm{u}_1 - \bm{u}_2 \|. \label{lem:Lem13.1.3}
\end{align}
\end{lemma}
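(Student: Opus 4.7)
The plan is to interpret (\ref{prob:Ru}) as a parametric family of optimization problems and apply a classical sensitivity result to the associated KKT system viewed as a map in $(\bm{x}, \bm{\lambda}, \bm{\mu}; \bm{u})$. Identity (\ref{lem:Lem13.1.1}) is immediate, since $(\bm{x}^*, \bm{\lambda}^*, \bm{\mu}^*)$ being a KKT point of (\ref{prob:main-nlp}) means it solves the KKT system of (\ref{prob:Ru}) at $\bm{u} = \bm{0}$. The remaining work is to produce a locally unique, Lipschitz, solution branch of this system as $\bm{u}$ varies near $\bm{0}$ and then check that the branch consists of local minimizers.

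Under the first set of assumptions (\textbf{LICQ})+(\textbf{SCS})+(\textbf{SOSC}), I would use (\textbf{SCS}) to locally replace the complementarity conditions $\bm{\mu} \geq \bm{0}$, $\bm{r}(\bm{x}) \leq \bm{0}$, $\bm{\mu}^{\intercal}\bm{r}(\bm{x}) = 0$ by the equalities $\bm{r}_{\mathcal{A}}(\bm{x}) = \bm{0}$ and $\bm{\mu}_{\mathcal{I}} = \bm{0}$, where $\mathcal{A} := \mathcal{A}(\bm{x}^*)$ and $\mathcal{I} := [m]\setminus\mathcal{A}$. This reduces the KKT conditions to a square smooth system $G(\bm{x}, \bm{\lambda}, \bm{\mu}; \bm{u}) = \bm{0}$, whose Jacobian with respect to $(\bm{x}, \bm{\lambda}, \bm{\mu})$ at the base point is built from $\nabla_{xx}^2 \mathcal{L}$ and the constraint gradients. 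A standard null-space argument combining (\textbf{LICQ}) (to handle linear independence of the equality, active inequality, and inactive-multiplier blocks) with (\textbf{SOSC}) (to force $\bm{d}_x = \bm{0}$ via coercivity on the critical subspace) shows this Jacobian is nonsingular, after which the implicit function theorem delivers a $\mathcal{C}^1$ map $\bm{u} \mapsto (\bm{x}(\bm{u}), \bm{\lambda}(\bm{u}), \bm{\mu}(\bm{u}))$ satisfying (\ref{lem:Lem13.1.2}); the Lipschitz bound (\ref{lem:Lem13.1.3}) follows from the differentiability of this map on a compact neighborhood, and persistence of (\textbf{SCS}) and (\textbf{SOSC}) under small perturbations certifies that $\bm{x}(\bm{u})$ is a local minimizer of (\ref{prob:Ru}).

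Under the second set of assumptions (\textbf{LICQ})+(\textbf{SSOSC}), (\textbf{SCS}) is unavailable and the active set need not be preserved as $\bm{u}$ varies. Here I would instead invoke Robinson's strong regularity theorem from \cite{Robinson1980}, which has already been cited in the proof of Theorem~\ref{NewtonYSchemeThm}. Viewed as a parametric generalized equation, the KKT system for (\ref{prob:Ru}) is strongly regular at $(\bm{x}^*, \bm{\lambda}^*, \bm{\mu}^*)$ as a consequence of (\textbf{LICQ}) plus (\textbf{SSOSC}), and Robinson's theorem then yields a locally unique Lipschitz solution branch $\bm{u} \mapsto (\bm{x}(\bm{u}), \bm{\lambda}(\bm{u}), \bm{\mu}(\bm{u}))$, giving (\ref{lem:Lem13.1.2}) and (\ref{lem:Lem13.1.3}) at once. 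A perturbation argument showing that (\textbf{SSOSC}) persists in a neighborhood then promotes $\bm{x}(\bm{u})$ to a local minimizer.

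I expect the main obstacle to be the second case. Without (\textbf{SCS}) the active set can change, so one cannot simply collapse the KKT system to a smooth square system and use the implicit function theorem; instead the critical subspace must be enlarged to include tangent directions to active constraints whose multipliers are zero, and one must verify that (\textbf{SSOSC}) still delivers the coercivity needed for Robinson's strong regularity on this enlarged cone. Once strong regularity is in hand, the Lipschitz estimate (\ref{lem:Lem13.1.3}) follows mechanically from Robinson's theorem applied to the parametric map $\bm{u} \mapsto \bm{h}(\bm{x}) - \bm{u}$.
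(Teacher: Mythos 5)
Your proposal is correct, but it takes a partially different route from the paper. The paper handles both hypothesis sets uniformly within Robinson's strong-regularity framework: it first notes that either (\textbf{LICQ})+(\textbf{SCS})+(\textbf{SOSC}) (via Section 4 of \cite{Robinson1980}) or (\textbf{LICQ})+(\textbf{SSOSC}) (via Theorem 4.1 of \cite{Robinson1980}) implies strong regularity of the KKT system for problem (\ref{prob:main-nlp}) at $(\bm{x}^*, \bm{\lambda}^*, \bm{\mu}^*)$, and then applies Corollary 2.2 of \cite{Robinson1980} after checking that the parametric KKT map $\bm{f}(\bm{u}, \bm{x}, \bm{\lambda}, \bm{\mu})$ depends on $\bm{u}$ only through the additive term $-\bm{h}(\bm{x}) + \bm{u}$, so the required Lipschitz modulus in the parameter is exactly $1$. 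You instead split the two cases: under (\textbf{SCS}) you collapse the KKT conditions to a square smooth system and invoke the implicit function theorem, and only under (\textbf{SSOSC}) do you call on Robinson. Both arguments work; yours is more elementary and self-contained in the nondegenerate case, while the paper's is shorter and avoids the case distinction because strong regularity already subsumes the implicit-function-theorem argument. One remark: what you flag as the ``main obstacle'' in the second case---verifying that (\textbf{SSOSC}) yields strong regularity without (\textbf{SCS})---is precisely the content of Robinson's Theorem 4.1 and can simply be cited, as the paper does; no new coercivity argument on an enlarged cone needs to be constructed. Finally, in either route one must still record the (trivial but necessary) observation that the perturbation enters the KKT map affinely in $\bm{u}$, which is what converts Robinson's abstract Lipschitz estimate into (\ref{lem:Lem13.1.3}); the paper makes this explicit in its displayed computation of $\|\bm{f}(\bm{u}_1, \cdot) - \bm{f}(\bm{u}_2, \cdot)\| = \|\bm{u}_1 - \bm{u}_2\|$.
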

\begin{proof}
We first establish that the KKT system for problem (\ref{prob:main-nlp}) is strongly regular at $(\bm{x}^*, \bm{\lambda}^*, \bm{\mu}^*)$. In Section 4 of \cite{Robinson1980}, Robinson outlines conditions under which the KKT system for a nonlinear program is strongly regular. One set of assumptions that guarantees a strongly regular KKT system for problem (\ref{prob:main-nlp}) at $(\bm{x}^*, \bm{\lambda}^*, \bm{\mu}^*)$ is that (\textbf{LICQ}), (\textbf{SCS}), and (\textbf{SOSC}) hold at $(\bm{x}^*, \bm{\lambda}^*, \bm{\mu}^*)$. Additionally, Theorem 4.1 of \cite{Robinson1980} proves that if assumptions (\textbf{LICQ}) and (\textbf{SSOSC}) hold at $(\bm{x}^*, \bm{\lambda}^*, \bm{\mu}^*)$ then this guarantees strong regularity for the KKT system in (\ref{prob:main-nlp}). Under either set of hypotheses for our lemma, we have that the KKT system for problem (\ref{prob:main-nlp}) is strongly regular at $(\bm{x}^*, \bm{\lambda}^*, \bm{\mu}^*)$. We now prove the lemma using some results from \cite{Robinson1980}.

Now denote the Lagrangian of problem (\ref{prob:Ru}) by
\begin{align}
\mathcal{L} (\bm{u}; \bm{x}, \bm{\lambda}, \bm{\mu}) = f (\bm{x}) + \bm{\lambda}^{\intercal} \left( \bm{h} (\bm{x}) - \bm{u} \right) + \bm{\mu}^{\intercal} \bm{r} (\bm{x}). \label{eq:KKTStability.0}
\end{align}
Using $\bm{f}(\cdot)$ in this proof in the context of \cite{Robinson1980}, we define 
\begin{align}
\bm{f} (\bm{u}, \bm{x}, \bm{\lambda}, \bm{\mu}) 
:= \begin{bmatrix}
\nabla_x \mathcal{L} (\bm{x}, \bm{\lambda}, \bm{\mu}) \\
%- \bm{r} (\bm{x}) \\
- \bm{r} (\bm{x}) \\
- \bm{h} (\bm{x}) + \bm{u}
\end{bmatrix}. \label{eq:KKTStability.1}
\end{align}
By the hypotheses of our lemma, we have that the hypotheses of Theorem 2.1 of \cite{Robinson1980} are satisfied. To apply Corollary 2.2 from \cite{Robinson1980}, it remains to show that there exists a constant $\nu$ such that
\begin{align}
\| \bm{f} (\bm{u}_1, \bm{x}, \bm{\lambda}, \bm{\mu}) - \bm{f} (\bm{u}_2, \bm{x}, \bm{\lambda}, \bm{\mu}) \| \leq \nu \| \bm{u}_1 - \bm{u}_2 \|, \label{eq:KKTStability.2}
\end{align}
for all $\bm{u}_1, \bm{u}_2 \in \mathcal{N}$ and $(\bm{x}, \bm{\lambda}, \bm{\mu}) \in V$, where $V$ is a closed neighborhood of $(\bm{x}^*, \bm{\lambda}^*, \bm{\mu}^*)$. Noting that
\begin{align}
\nabla_x \mathcal{L} (\bm{u}; \bm{x}, \bm{\lambda}, \bm{\mu})^{\intercal} = \nabla_x f (\bm{x})^{\intercal} + \nabla_x \bm{h} (\bm{x})^{\intercal} \bm{\lambda} + \nabla_x \bm{r} (\bm{x})^{\intercal} \bm{\mu} \label{eq:KKTStability.3}
\end{align}
it follows that
\begin{align}
\| \bm{f} (\bm{u}_1, \bm{x}, \bm{\lambda}, \bm{\mu}) - \bm{f} (\bm{u}_2, \bm{x}, \bm{\lambda}, \bm{\mu}) \| 
= \left\| \begin{bmatrix}
\bm{0} \\
\bm{0} \\
\bm{u}_1 - \bm{u}_2 
\end{bmatrix} \right\|
= \left\| \bm{u}_1 - \bm{u}_2 \right\|, \label{eq:KKTStability.4}
\end{align}
which concludes the proof.
\end{proof}

%In Section 4 of \cite{Robinson1980}, Robinson outlines sufficient conditions under which the KKT system for a nonlinear program is strongly regular. One set of assumptions that guarantees a strongly regular KKT system is (\textbf{LICQ}), (\textbf{SCS}), and (\textbf{SOSC}). Additionally, Theorem 4.1 of \cite{Robinson1980} proves that assumptions (\textbf{LICQ}) and (\textbf{SSOSC}) guarantee strong regularity for the KKT system in a nonlinear program.

Next, we consider the problem where we %choose $j \in \{0,1\}$ and 
fix a vector $\bm{u}$ then solve
\begin{align}
\begin{array}{cc}
\displaystyle \min_{\bm{\lambda}, \bm{\mu}} & E_{m,0} (\bm{u}, \bm{\lambda}, \bm{\mu}) + \gamma \left\| [\bm{\lambda}, \bm{\mu}] \right\|^2 \\
\text{s.t.} & \bm{\mu} \geq \bm{0}.
\end{array} \label{prob:Mu-gamma}
\end{align}
where $\gamma > 0$ is a constant. This problem appears in the LS algorithm and it will simplify the upcoming convergence analysis for the multiplier step of the LS algorithm to consider it individually beforehand. For this problem, we have a stability result that follows from an application of Corollary 2.2 in \cite{Robinson1980}.

\begin{lemma} \label{lem:AltLem13.2}
Suppose that $f, \bm{h} \in \mathcal{C}^1$ and that $(\bm{x}^*, \bm{\lambda}^*, \bm{\mu}^*)$ is a KKT point for problem (\ref{prob:main-nlp}). Then there exists a neighborhood $\mathcal{N}$ of $\bm{x}^*$ such that, for every $\bm{u} \in \mathcal{N}$, problem (\ref{prob:Mu-gamma}) has a local minimizer $(\bm{\lambda} (\bm{u}), \bm{\mu} (\bm{u}))$ and associated multiplier $\bm{\alpha} (\bm{u})$ satisfying
\iffalse
\begin{itemize}
\item[(\ref{lem:AltLem13.2}.1)] $\begin{pmatrix} \bm{\lambda} (\bm{x}^*), \ \bm{\mu} (\bm{x}^*) \end{pmatrix} = \left( \bm{\lambda}^*, \bm{\mu}^* \right)$.
\item[(\ref{lem:AltLem13.2}.2)] $\begin{pmatrix} \bm{\lambda} (\bm{u}), \ \bm{\mu} (\bm{u}), \ \bm{\alpha} (\bm{u}) \end{pmatrix}$ satisfies the KKT system for problem (\ref{prob:Mu-gamma}).
\end{itemize}
\fi
\begin{align}
&\begin{pmatrix} \bm{\lambda} (\bm{x}^*), \ \bm{\mu} (\bm{x}^*) \end{pmatrix} = \left( \bm{\lambda}^*, \bm{\mu}^* \right), \label{lem:AltLem13.2.1} \\
&\begin{pmatrix} \bm{\lambda} (\bm{u}), \ \bm{\mu} (\bm{u}), \ \bm{\alpha} (\bm{u}) \end{pmatrix} \ \text{satisfies the KKT system for problem (\ref{prob:Mu-gamma})}. \label{lem:AltLem13.2.2}
\end{align}
Additionally, there exists a constant $c \in \mathbb{R}$ such that, for every $\bm{u}_1, \bm{u}_2 \in \mathcal{N}$, we have
\iffalse
\begin{itemize}
\item[(\ref{lem:AltLem13.2}.3)] $\displaystyle \left\| \left(
\bm{\lambda} (\bm{u}_1), \bm{\mu} (\bm{u}_1), \bm{\alpha} (\bm{u}_1)
\right) - \left(
\bm{\lambda} (\bm{u}_2), \bm{\mu} (\bm{u}_2), \bm{\alpha} (\bm{u}_2)
\right) \right\| \leq c \| \bm{u}_1 - \bm{u}_2 \|$.
\end{itemize}
\fi
\begin{align}
\left\| \left(
\bm{\lambda} (\bm{u}_1), \bm{\mu} (\bm{u}_1), \bm{\alpha} (\bm{u}_1)
\right) - \left(
\bm{\lambda} (\bm{u}_2), \bm{\mu} (\bm{u}_2), \bm{\alpha} (\bm{u}_2)
\right) \right\| \leq c \| \bm{u}_1 - \bm{u}_2 \| \label{lem:AltLem13.2.3}
\end{align}
\end{lemma}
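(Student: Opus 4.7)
The plan is to mirror the proof of Lemma~\ref{lem:Lem13.1}, recasting the KKT system of the parametric problem (\ref{prob:Mu-gamma}) as a generalized equation in Robinson's sense \cite{Robinson1980} and then applying Corollary~2.2 of \cite{Robinson1980}. Letting $\bm{\alpha} \geq \bm{0}$ denote the Lagrange multiplier associated with the bound $\bm{\mu} \geq \bm{0}$, I would introduce the map
\begin{align*}
\bm{f}(\bm{u}, \bm{\lambda}, \bm{\mu}, \bm{\alpha}) = \begin{bmatrix} 2\nabla \bm{h}(\bm{u}) \nabla_x \mathcal{L}(\bm{u}, \bm{\lambda}, \bm{\mu})^{\intercal} + 2\gamma \bm{\lambda} \\ 2\bm{A} \nabla_x \mathcal{L}(\bm{u}, \bm{\lambda}, \bm{\mu})^{\intercal} - \bm{r}(\bm{u}) + 2\gamma \bm{\mu} - \bm{\alpha} \\ -\bm{\mu} \end{bmatrix},
\end{align*}
so that the full KKT conditions of (\ref{prob:Mu-gamma}) are captured by $\bm{f}(\bm{u}, \bm{\lambda}, \bm{\mu}, \bm{\alpha}) = \bm{0}$ together with complementarity on $(\bm{\mu}, \bm{\alpha})$.

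Next I would verify strong regularity of this generalized equation at a reference triple $(\bar{\bm{\lambda}}, \bar{\bm{\mu}}, \bar{\bm{\alpha}})$ solving the KKT system at $\bm{u} = \bm{x}^*$. The objective of (\ref{prob:Mu-gamma}) is a strictly convex quadratic in $[\bm{\lambda}, \bm{\mu}]$ whose Hessian equals $2 M(\bm{u})^{\intercal} M(\bm{u}) + 2\gamma \bm{I}$ for $M(\bm{u}) := [\nabla \bm{h}(\bm{u})^{\intercal}\ \bm{A}^{\intercal}]$, which is positive definite whenever $\gamma > 0$. Moreover, the constraint set $\{\bm{\mu} \geq \bm{0}\}$ consists of simple bounds whose active-set gradients are distinct standard basis vectors, so LICQ is automatic and a unique KKT triple exists. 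The strong second-order sufficient condition is immediate from the positive-definite Hessian, so Theorem~4.1 of \cite{Robinson1980} yields strong regularity at $(\bar{\bm{\lambda}}, \bar{\bm{\mu}}, \bar{\bm{\alpha}})$.

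The dependence on $\bm{u}$ in $\bm{f}$ enters only through the continuous maps $\nabla_x f(\bm{u})$, $\nabla \bm{h}(\bm{u})$, and $\bm{r}(\bm{u})$; since $f, \bm{h} \in \mathcal{C}^1$, the map $\bm{u} \mapsto \bm{f}(\bm{u}, \bm{\lambda}, \bm{\mu}, \bm{\alpha})$ is locally Lipschitz in $\bm{u}$ uniformly over any bounded set of $(\bm{\lambda}, \bm{\mu}, \bm{\alpha})$, which is the analogue of the bound (\ref{eq:KKTStability.2}) invoked in the proof of Lemma~\ref{lem:Lem13.1}. With strong regularity and uniform Lipschitz dependence in hand, Corollary~2.2 of \cite{Robinson1980} then supplies a Lipschitz single-valued selection $\bm{u} \mapsto (\bm{\lambda}(\bm{u}), \bm{\mu}(\bm{u}), \bm{\alpha}(\bm{u}))$ on some neighborhood $\mathcal{N}$ of $\bm{x}^*$ that solves the KKT conditions for every $\bm{u} \in \mathcal{N}$, yielding (\ref{lem:AltLem13.2.2}) and (\ref{lem:AltLem13.2.3}); the identity (\ref{lem:AltLem13.2.1}) is then the value of this selection at $\bm{u} = \bm{x}^*$.

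The main obstacle I anticipate is identifying the reference KKT pair $(\bar{\bm{\lambda}}, \bar{\bm{\mu}})$ of the regularized problem at $\bm{u} = \bm{x}^*$ with the original multipliers $(\bm{\lambda}^*, \bm{\mu}^*)$ of problem (\ref{prob:main-nlp}) as asserted by (\ref{lem:AltLem13.2.1}). Substituting $(\bm{\lambda}^*, \bm{\mu}^*)$ into the stationarity equations at $\bm{u} = \bm{x}^*$ leaves residuals proportional to $\gamma \bm{\lambda}^*$ and $\gamma \bm{\mu}^*$ coming from the regularizer, and these must be absorbed by a careful choice of $\bar{\bm{\alpha}}$ while respecting the complementary slackness $\bm{\mu}^{*\intercal} \bm{r}(\bm{x}^*) = 0$ and the stationarity $\nabla_x \mathcal{L}(\bm{x}^*, \bm{\lambda}^*, \bm{\mu}^*) = \bm{0}$ of the original KKT point. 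Reconciling this regularization-induced residual is the most delicate step and is where the interaction between the parameter $\gamma$ and the structure of the active set must be handled with care.
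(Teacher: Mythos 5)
Your proposal follows essentially the same route as the paper: positive definiteness of the Hessian $B(\bm{u})^{\intercal}B(\bm{u}) + \gamma\bm{I}$ from the regularizer, automatic linear independence of the active bound-constraint gradients, hence strong regularity in the sense of \cite{Robinson1980}, followed by a uniform Lipschitz bound in $\bm{u}$ and an application of Corollary~2.2 of \cite{Robinson1980} to obtain (\ref{lem:AltLem13.2.2}) and (\ref{lem:AltLem13.2.3}). The obstacle you flag at the end is genuine: the stationarity residual $2\gamma\bm{\lambda}^*$ in the $\bm{\lambda}$-equation cannot be absorbed by any multiplier (since $\bm{\lambda}$ is unconstrained in (\ref{prob:Mu-gamma})), so $(\bm{\lambda}^*,\bm{\mu}^*)$ is generally \emph{not} a KKT point of the regularized problem at $\bm{u}=\bm{x}^*$ unless $\bm{\lambda}^*=\bm{0}$ and the corresponding condition on $\bm{\mu}^*$ holds; the paper's own proof never verifies (\ref{lem:AltLem13.2.1}) and simply concludes from Corollary~2.2, so you have not missed a resolution that exists in the text — the reference solution of the regularized problem should properly be taken as its own minimizer $(\bar{\bm{\lambda}},\bar{\bm{\mu}})$ rather than $(\bm{\lambda}^*,\bm{\mu}^*)$, and only the Lipschitz stability statements (\ref{lem:AltLem13.2.2})--(\ref{lem:AltLem13.2.3}) are actually delivered by this argument.
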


\begin{proof}
Denote the Lagrangian for problem (\ref{prob:Mu-gamma}) by
\begin{align}
\mathcal{L} (\bm{u}; \bm{\lambda}, \bm{\mu}, \bm{\alpha}) = E_{m,0} (\bm{u}, \bm{\lambda}, \bm{\mu}) + \gamma \left\| [\bm{\lambda}, \bm{\mu}] \right\|^2 - \bm{\mu}^{\intercal} \bm{\alpha}. \label{eq:EMStability.0.0}
\end{align}
As in the proof of Lemma~\ref{lem:Lem13.1}, we first establish that the KKT system for problem (\ref{prob:Mu-gamma}) is strongly regular at $(\bm{\lambda}^*, \bm{\mu}^*, \bm{\alpha}^*)$ under the given hypotheses. Using (\ref{eq:EMStability.0.0}) we have that the hessian of the Lagrangian for problem (\ref{prob:Mu-gamma}) is given by $\nabla^2 \mathcal{L} (\bm{u}; \bm{\lambda}, \bm{\mu}, \bm{\alpha}) = B (\bm{u})^{\intercal} B (\bm{u}) + \gamma \bm{I}$, where $\bm{I}$ is the identity matrix and $B (\bm{u})$ is the matrix defined by
\begin{align}
B (\bm{u})
&:= \begin{bmatrix}
\ \nabla_x \bm{h} (\bm{u})^{\intercal} &\vert& \nabla_x \bm{r} (\bm{u})^{\intercal} \
\end{bmatrix}. \label{eq:EmDiscussion.1}
\end{align}
Hence, for every vector $\bm{v} = [\bm{\lambda}, \bm{\mu}]^{\intercal}$ with $\bm{v} \neq \bm{0}$ we have
\begin{align}
\bm{v}^{\intercal} \nabla^2 \mathcal{L} (\bm{u}; \bm{\lambda}, \bm{\mu}, \bm{\alpha}) \bm{v}
&= \left\| B (\bm{u}) \bm{v} \right\|^2 + \gamma \left\| \bm{v} \right\|^2
> 0. \label{eq:EmDiscussion.2}
\end{align}
So we have that the hessian of the Lagrangian for problem (\ref{prob:Mu-gamma}) is positive definite. 
\iffalse %%%%%%%%%%%%%%%%%%%%%%%%%%%%%%
Under our hypothesis, %that (\textbf{LI}) holds at $\bm{x}^*$, 
we have that $B (\bm{x}^*)$ is of full column rank yielding that $B (\bm{x}^*)^{\intercal} B (\bm{x}^*)$ is nonsingular. As such, there exists a constant $\sigma > 0$ such that
\begin{align}
\bm{v}^{\intercal} \nabla^2 \mathcal{L} (\bm{x}^*; \bm{\lambda}, \bm{\mu}, \bm{\alpha}) \bm{v}
&= \left\| B (\bm{x}^*) \bm{v} \right\|^2
> \sigma. \label{eq:EmDiscussion.3}
\end{align}
So we have positive definiteness of the hessian. %under assumption (\textbf{LI}). 
\fi %%%%%%%%%%%%%%%%%%%%%%%%%%%%%%
As (\ref{prob:Mu}) only has bound constraints, the gradients of the active constraints are linearly independent. Thus, %under the assumption that (\textbf{LI}) holds at $\bm{x}^*$ 
the KKT system in problem (\ref{prob:Mu-gamma}) satisfies the strong regularity condition from \cite{Robinson1980}.

Now define $\bm{f} (\bm{u}, \bm{\lambda}, \bm{\mu}, \bm{\alpha}) := \begin{bmatrix}
\nabla_{[\lambda, \mu]} \mathcal{L} (\bm{u}; \bm{\lambda}, \bm{\mu}, \bm{\alpha})^{\intercal} \\
\bm{\mu}
\end{bmatrix}$, where $\bm{f}(\cdot)$ is used in this proof in the context of \cite{Robinson1980}. By our hypotheses, we have that the hypotheses of Theorem 2.1 of \cite{Robinson1980} are satisfied. To apply Corollary 2.2 from \cite{Robinson1980}, it remains to show that there exists a constant $\nu$ such that
\begin{align}
\| \bm{f} (\bm{u}_1, \bm{\lambda}, \bm{\mu}, \bm{\alpha}) - \bm{f} (\bm{u}_2, \bm{\lambda}, \bm{\mu}, \bm{\alpha}) \| \leq \nu \| \bm{u}_1 - \bm{u}_2 \|, \label{eq:EMStability.0}
\end{align}
for all $\bm{u}_1, \bm{u}_2 \in \mathcal{N}$ and $(\bm{\lambda}, \bm{\mu}, \bm{\alpha}) \in V$, where $V$ is a closed neighborhood of $(\bm{\lambda}^*, \bm{\mu}^*, \bm{\alpha}^*)$. To this end, observe that
\begin{align}
\nabla_{\lambda} \mathcal{L} (\bm{u}; \bm{\lambda}, \bm{\mu}, \bm{\alpha})^{\intercal}
&= \nabla \bm{h} (\bm{u}) \nabla f (\bm{u})^{\intercal} + \nabla \bm{h} (\bm{u}) \nabla \bm{h} (\bm{u})^{\intercal} \bm{\lambda} + \nabla \bm{h} (\bm{u}) \nabla \bm{r} (\bm{u})^{\intercal} \bm{\mu} + 2 \gamma \bm{\lambda}. \label{eq:EMStability.1}
\end{align}
Hence,
\begin{align}
\| \nabla_{\lambda} \mathcal{L} (\bm{u}_1; \bm{\lambda}, \bm{\mu}, \bm{\alpha})^{\intercal} - \nabla_{\lambda} \mathcal{L} (\bm{u}_2; \bm{\lambda}, \bm{\mu}, \bm{\alpha})^{\intercal} \|
&\leq \| \nabla \bm{h} (\bm{u}_1) \nabla f (\bm{u}_1)^{\intercal} - \nabla \bm{h} (\bm{u}_2) \nabla f (\bm{u}_2)^{\intercal} \| \nonumber \\
&\quad + \| \nabla \bm{h} (\bm{u}_1) \nabla \bm{h} (\bm{u}_1)^{\intercal} - \nabla \bm{h} (\bm{u}_2) \nabla \bm{h} (\bm{u}_2)^{\intercal} \| \| \bm{\lambda} \| \nonumber \\
&\quad + \| \nabla \bm{h} (\bm{u}_1) \nabla \bm{r} (\bm{u}_1)^{\intercal} - \nabla \bm{h} (\bm{u}_2) \nabla \bm{r} (\bm{u}_2)^{\intercal} \| \| \bm{\mu} \|. \label{eq:EMStability.2}
\end{align}
As $(\bm{\lambda}, \bm{\mu}, \bm{\alpha})$ is in a closed neighborhood $V$ of $(\bm{\lambda}^*, \bm{\mu}^*, \bm{\alpha}^*)$, there exist constants $c_1$ and $c_2$ such that $\| \bm{\lambda} \| \leq c_1$ and $\| \bm{\mu} \| \leq c_2$ for all $(\bm{\lambda}, \bm{\mu}, \bm{\alpha}) \in V$. As $f, \bm{h}, \bm{r} \in \mathcal{C}^1$, the functions $\nabla \bm{h} (\cdot) \nabla f (\cdot)^{\intercal}$, $\nabla \bm{h} (\cdot) \nabla \bm{h} (\cdot)^{\intercal}$, and $\nabla \bm{h} (\cdot) \nabla \bm{r} (\cdot)^{\intercal}$ are Lipschitz continuous on $\mathcal{N}$. Combining these facts with (\ref{eq:EMStability.2}) yields that there exists a constant $\nu$ such that
\begin{align}
\| \nabla_{\lambda} \mathcal{L} (\bm{u}_1; \bm{\lambda}, \bm{\mu}, \bm{\alpha})^{\intercal} - \nabla_{\lambda} \mathcal{L} (\bm{u}_2; \bm{\lambda}, \bm{\mu}, \bm{\alpha})^{\intercal} \|
&\leq \nu \| \bm{u}_1 - \bm{u}_2 \|, \label{eq:EMStability.3}
\end{align}
for all $\bm{u}_1, \bm{u}_2 \in \mathcal{N}$ and $(\bm{\lambda}, \bm{\mu}, \bm{\alpha}) \in V$. Next, observe that
\begin{align}
\nabla_{\mu} \mathcal{L} (\bm{u}; \bm{\lambda}, \bm{\mu}, \bm{\alpha})^{\intercal}
&= \nabla \bm{r} (\bm{u}) \nabla f (\bm{u})^{\intercal} + \nabla \bm{r} (\bm{u}) \nabla \bm{h} (\bm{u})^{\intercal} \bm{\lambda} + \nabla \bm{r} (\bm{u}) \nabla \bm{r} (\bm{u})^{\intercal} \bm{\mu} - \bm{r} (\bm{u}) - \bm{\alpha} + 2 \gamma \bm{\mu}. \label{eq:EMStability.4}
\end{align}
Hence,
\begin{align}
\| \nabla_{\mu} \mathcal{L} (\bm{u}_1; \bm{\lambda}, \bm{\mu}, \bm{\alpha})^{\intercal} - \nabla_{\mu} \mathcal{L} (\bm{u}_2; \bm{\lambda}, \bm{\mu}, \bm{\alpha})^{\intercal} \|
&\leq \| \nabla \bm{r} (\bm{u}_1) \nabla f (\bm{u}_1)^{\intercal} - \nabla \bm{r} (\bm{u}_2) \nabla f (\bm{u}_2)^{\intercal} \| \nonumber \\
&\quad + \| \nabla \bm{r} (\bm{u}_1) \nabla \bm{h} (\bm{u}_1)^{\intercal} - \nabla \bm{r} (\bm{u}_2) \nabla \bm{h} (\bm{u}_2)^{\intercal} \| \| \bm{\lambda} \| \nonumber \\
&\quad + \| \nabla \bm{r} (\bm{u}_1) \nabla \bm{r} (\bm{u}_1)^{\intercal} - \nabla \bm{r} (\bm{u}_2) \nabla \bm{r} (\bm{u}_2)^{\intercal} \| \| \bm{\mu} \| \nonumber \\
&\quad + \| \bm{r} (\bm{u}_1) - \bm{r} (\bm{u}_2) \|. \label{eq:EMStability.5}
\end{align}
By an argument similar to the one establishing (\ref{eq:EMStability.3}), there exists a constant $\nu$ such that
\begin{align}
\| \nabla_{\mu} \mathcal{L} (\bm{u}_1; \bm{\lambda}, \bm{\mu}, \bm{\alpha})^{\intercal} - \nabla_{\mu} \mathcal{L} (\bm{u}_2; \bm{\lambda}, \bm{\mu}, \bm{\alpha})^{\intercal} \|
&\leq \nu \| \bm{u}_1 - \bm{u}_2 \|, \label{eq:EMStability.6}
\end{align}
for all $\bm{u}_1, \bm{u}_2 \in \mathcal{N}$ and $(\bm{\lambda}, \bm{\mu}, \bm{\alpha}) \in V$. Now combining (\ref{eq:EMStability.3}) and (\ref{eq:EMStability.6}) yields (\ref{eq:EMStability.0}). Corollary 2.2 from \cite{Robinson1980} can now be applied to conclude the proof.
\end{proof}

Note that the inclusion of $\gamma \left\| [\bm{\lambda}, \bm{\mu}] \right\|^2$ in the objective function of problem (\ref{prob:Mu-gamma}) is to ensure positive definiteness of the hessian. If we instead considered the problem
\begin{align}
\begin{array}{cc}
\displaystyle \min_{\bm{\lambda}, \bm{\mu}} & E_{m,0} (\bm{u}, \bm{\lambda}, \bm{\mu}) \\
\text{s.t.} & \bm{\mu} \geq \bm{0}.
\end{array} \label{prob:Mu}
\end{align}
then achieving a result analogous to Lemma~\ref{lem:AltLem13.2} for problem (\ref{prob:Mu}) would require the additional, and much stronger, hypothesis that {\footnotesize $\displaystyle \begin{bmatrix} \nabla \bm{h} (\bm{x}^*) \\ \bm{A} \end{bmatrix}$} is of full row rank.

Before moving to the convergence analysis for the multiplier step, we provide a remark on what can be said on the stability of problem (\ref{prob:Mu-gamma}) if $E_{m,0}$ is replaced with $E_{m,1}$. To this end, we note that $E_{m,1} (\bm{u}, \bm{\lambda}, \bm{\mu})$ must be twice continuously differentiable with respect to $\bm{\lambda}$ and $\bm{\mu}$ in a neighborhood of a solution $(\bm{\lambda}^*, \bm{\mu}^*)$ to establish a satibility result similar to Lemma~\ref{lem:AltLem13.2}. We can guarantee this holds provided that the hypotheses of Lemma~\ref{lem:E1diff} are satisfied, that is, in addition to the hypotheses of Lemma~\ref{lem:AltLem13.2} we require that assumption (\textbf{SCS}) holds at $(\bm{x}^*, \bm{\lambda}^*, \bm{\mu}^*)$. This result is much more restrictive as it would limit future analysis to only nondegenerate stationary points of problem (\ref{prob:main-nlp}). As such, this provides some motivation behind the choice of minimizing $E_{m,0}$ with respect to $\bm{\lambda}$ and $\bm{\mu}$ instead of $E_{m,1}$ in the Multiplier Step of NPASA. We note that the additional minization of $E_{m,1}$ with respect to $\bm{\mu}$ in the Multiplier Step is required in order to ensure local quadratic convergence of the multiplier step in the LS algorithm as will be illustrated in the proof of Theorem~\ref{ConvergenceResult}.

With these lemmas in place, we are now ready to state and prove a convergence result for the multiplier step of NPASA, found in Algorithm~\ref{alg:cms}. 
%TODO
%This result is a modified version of Theorem 4.1 in \cite{Hager1993} accounting for the updates made to the algorithm. While there are many required updates to the proof, we note that some details of the proof are essentially the same as in the proof of Theorem 4.1 in \cite{Hager1993} but are included here for completeness.

\begin{theorem} \label{ConvergenceResult}
Suppose that $f, \bm{h} \in \mathcal{C}^2$ and that either assumptions (\textbf{LICQ}), (\textbf{SCS}), and (\textbf{SOSC}) are satisfied at $(\bm{x}^*, \bm{\lambda}^*, \bm{\mu}^*)$ or assumptions (\textbf{LICQ}) and (\textbf{SSOSC}) are satisfied at $(\bm{x}^*, \bm{\lambda}^*, \bm{\mu}^*)$. %If $p_k$ is chosen such that $p_k \geq 1$ and $\frac{1}{p_k} \leq \| h (\bm{x}_k) \|^2$ for all $k \geq 0$ then 
Then there exists a neighborhood $\mathcal{N}_1$ and $\mathcal{N}_2$ of $\bm{x}^*$ and a constant $c$ such that, for every $\bm{z}_0 \in \mathcal{N}_1 \cap \Omega$, the primal variables and equality multipliers, $(\bm{z}, \bm{\nu})$, generated using the scheme
\begin{align}
%(\bm{\nu}_i, \bm{\eta}_i) &\in \arg\min \left\{ E_{m,0} ( \bm{z}_i, \bm{\nu}, \bm{\eta}) : \bm{\eta} \geq \bm{0} \right\} \label{scheme:MS1} \\
(\bm{\nu}_i, \bm{\eta}_i) &\in \arg\min \left\{ E_{m,0} ( \bm{z}_i, \bm{\nu}, \bm{\eta}) + \gamma \left\| [\bm{\lambda}, \bm{\mu}] \right\|^2 : \bm{\eta} \geq \bm{0} \right\} \label{scheme:MS1} \\
\bm{\eta}_i' &\in \arg\min \left\{ E_{m,1} ( \bm{z}_i, \bm{\nu}_i, \bm{\eta}) : \bm{\eta} \geq \bm{0} \right\} \label{scheme:MS2} \\
\bm{z}_{i+1} &= \arg \min \left\{ \mathcal{L}_{p_i}^i (\bm{z}, \bm{\nu}_i) : \nabla \bm{h} (\bm{z}_i) (\bm{z} - \bm{z}_i) = 0, \bm{z} \in \mathcal{N}_2 \cap \Omega \right\} \label{scheme:MS3}
\end{align}
%in the multiplier step of NPASA 
converge to a point $( \bm{z}^*, \bm{\nu}^* )$ and
\iffalse
\begin{itemize}
\item[(\ref{ConvergenceResult}.0)] $\displaystyle \min_{\bm{\eta} \geq \bm{0}} E_{m,1} ( \bm{z}^*, \bm{\nu}^*, \bm{\eta} ) = 0$.
\end{itemize}
\fi
\begin{align}
\min_{\bm{\eta} \geq \bm{0}} E_{m,1} ( \bm{z}^*, \bm{\nu}^*, \bm{\eta} ) = 0. \label{ConvergenceResult.0}
\end{align}
Furthermore, the following properties hold for $i \geq 0$:
\iffalse
\begin{itemize}
\item[(\ref{ConvergenceResult}.1)] $\| \bm{z}_i - \bm{z}^* \| \leq c 2^{-2^i}$.
\item[(\ref{ConvergenceResult}.2)] $\| \bm{z}_{i+1} - \bm{z}_1 \| \leq c \| \bm{z}_0 - \bm{x}^* \|^2$.
\item[(\ref{ConvergenceResult}.3)] $\| \bm{z}_{i+1} - \bm{x}^* \| \leq c \| \bm{z}_0 - \bm{x}^* \|^2 + c \| h (\bm{z}_0) \|$.
\item[(\ref{ConvergenceResult}.4)] $E_{m,1} \left( \bm{z}_{i+1}, \bm{\nu}_{i+1}, \bm{\eta}_{i+1}' \right) \leq c E_{m,1} \left( \bm{z}_i, \bm{\nu}_i, \bm{\eta}_i' \right)^2$.
\item[(\ref{ConvergenceResult}.5)] $E_1 \left( \bm{z}_i, \bm{\nu}_i, \bm{\eta}_i' \right) \leq \| \bm{z}_i - \bm{x}^* \|$.
%\item[(\ref{ConvergenceResult}.6)] $\displaystyle \lim_{i \to \infty} E_{m,1} \left( \bm{z}_i, \bm{\nu}_i, \bm{\eta}_i' \right) = 0$.
\end{itemize}
\fi
\begin{align}
&\| \bm{z}_i - \bm{z}^* \| \leq c 2^{-2^i}, \label{ConvergenceResult.1} \\
&\| \bm{z}_{i+1} - \bm{z}_1 \| \leq c \| \bm{z}_0 - \bm{x}^* \|^2, \label{ConvergenceResult.2} \\
&\| \bm{z}_{i+1} - \bm{x}^* \| \leq c \| \bm{z}_0 - \bm{x}^* \|^2 + c \| h (\bm{z}_0) \|, \label{ConvergenceResult.3} \\
&E_{m,1} \left( \bm{z}_{i+1}, \bm{\nu}_{i+1}, \bm{\eta}_{i+1}' \right) \leq c E_{m,1} \left( \bm{z}_i, \bm{\nu}_i, \bm{\eta}_i' \right)^2, \label{ConvergenceResult.4} \\
&E_1 \left( \bm{z}_i, \bm{\nu}_i, \bm{\eta}_i' \right) \leq \| \bm{z}_i - \bm{x}^* \|. \label{ConvergenceResult.5}
%\lim_{i \to \infty} E_{m,1} \left( \bm{z}_i, \bm{\nu}_i, \bm{\eta}_i' \right) &= 0. \label{ConvergenceResult.6}
\end{align}
\end{theorem}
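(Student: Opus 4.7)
The plan is to set up a two-term recursive bound for $\|\bm{z}_i - \bm{x}^*\|$ that combines the primal-step estimate of Theorem~\ref{Theorem3.1} with the multiplier stability result of Lemma~\ref{lem:AltLem13.2}, propagate it by induction to obtain (\ref{ConvergenceResult.3}), and then translate the resulting iterate bounds into the error-estimator statements (\ref{ConvergenceResult.4})--(\ref{ConvergenceResult.5}). First, I shrink $\mathcal{N}_1$ and $\mathcal{N}_2$ so that Lemma~\ref{lem:AltLem13.2}, Lemma~\ref{lem:Lem13.1}, and Theorem~\ref{Theorem3.1} simultaneously apply, which guarantees the subproblems (\ref{scheme:MS1})--(\ref{scheme:MS3}) admit unique local minimizers whenever $\bm{z}_i \in \mathcal{N}_1 \cap \Omega$. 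Applying Lemma~\ref{lem:AltLem13.2} with $\bm{u}_1 = \bm{z}_i$, $\bm{u}_2 = \bm{x}^*$, together with $(\bm{\lambda}(\bm{x}^*), \bm{\mu}(\bm{x}^*)) = (\bm{\lambda}^*, \bm{\mu}^*)$ from (\ref{lem:AltLem13.2.1}), yields the linear multiplier bound $\|\bm{\nu}_i - \bm{\lambda}^*\| + \|\bm{\eta}_i - \bm{\mu}^*\| \leq c\|\bm{z}_i - \bm{x}^*\|$. Substituting this into Theorem~\ref{Theorem3.1} applied with $\bm{u}_i = \bm{z}_i$ produces the master primal estimate
\begin{equation*}
\|\bm{z}_{i+1} - \bm{x}^*\| \leq c\|\bm{z}_i - \bm{x}^*\|^2 + c\|\bm{h}(\bm{z}_i)\|.
\end{equation*}

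The hard part is controlling $\|\bm{h}(\bm{z}_i)\|$, which does not vanish because the linearization in (\ref{scheme:MS3}) only cancels the first-order Taylor expansion of $\bm{h}$ at $\bm{z}_i$. Using the constraint $\nabla\bm{h}(\bm{z}_i)(\bm{z}_{i+1} - \bm{z}_i) = \bm{0}$ together with Taylor's theorem yields $\|\bm{h}(\bm{z}_{i+1}) - \bm{h}(\bm{z}_i)\| \leq c\|\bm{z}_{i+1} - \bm{z}_i\|^2$, so the constraint violation can change, but only by a quadratically small amount. Setting $\delta := \|\bm{z}_0 - \bm{x}^*\|^2 + \|\bm{h}(\bm{z}_0)\|$ and choosing $\mathcal{N}_1$ small enough that $cC\delta \leq 1/2$ for a fixed constant $C$, a bootstrap induction on the hypothesis $\|\bm{z}_j - \bm{x}^*\| \leq C\delta$ closes through the master estimate and simultaneously yields both $\|\bm{z}_i - \bm{x}^*\| \leq C\delta$ and $\|\bm{h}(\bm{z}_i)\| \leq c\delta$ for every $i \geq 1$, establishing (\ref{ConvergenceResult.3}).

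For the quadratic convergence statements (\ref{ConvergenceResult.0})--(\ref{ConvergenceResult.2}) and (\ref{ConvergenceResult.4}), I introduce the perturbed minimizer $\bm{z}^* := \bm{x}(\bm{u}^*)$ supplied by Lemma~\ref{lem:Lem13.1}, where $\bm{u}^*$ is the limit of $\bm{h}(\bm{z}_i)$ (which exists since $\|\bm{h}(\bm{z}_{i+1}) - \bm{h}(\bm{z}_i)\|$ is summable by the estimate above once the quadratic rate is known). Since problem (\ref{prob:Ru}) with $\bm{u} = \bm{u}^*$ inherits the hypotheses of the theorem by continuity of the perturbation when $\|\bm{u}^*\|$ is small, the entire argument of Theorem~\ref{Theorem3.1} reapplies with the reference point $(\bm{x}^*, \bm{\lambda}^*, \bm{\mu}^*)$ replaced by the perturbed KKT triple $(\bm{z}^*, \bm{\nu}^*, \bm{\eta}^*)$. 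In this relative formulation the residual term $\|\bm{h}(\bm{z}_i) - \bm{u}^*\|$ is itself quadratically small in $\|\bm{z}_i - \bm{z}^*\|$, so the recursion collapses to $\|\bm{z}_{i+1} - \bm{z}^*\| \leq c\|\bm{z}_i - \bm{z}^*\|^2$ for $i \geq 1$, giving (\ref{ConvergenceResult.1}) by iteration, (\ref{ConvergenceResult.2}) by the triangle inequality with the base-case estimate $\|\bm{z}_1 - \bm{z}^*\| \leq c\|\bm{z}_0 - \bm{x}^*\|^2$, and (\ref{ConvergenceResult.0}) by passing to the limit. Property (\ref{ConvergenceResult.4}) then follows from Lemma~\ref{lem:E1diff} applied at $(\bm{z}^*, \bm{\nu}^*, \bm{\eta}^*)$: the function $E_{m,1}$ is $\mathcal{C}^2$ with a nondegenerate quadratic minimum of value zero there, so $E_{m,1}$ is equivalent to the squared iterate distance, and squaring the quadratic iterate rate establishes the claim.

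Finally, (\ref{ConvergenceResult.5}) is proved directly: because $\bm{\mu}^* \geq \bm{0}$, it is a feasible comparison point in the definition of $\bm{\eta}_i'$, so $E_{m,1}(\bm{z}_i, \bm{\nu}_i, \bm{\eta}_i') \leq E_{m,1}(\bm{z}_i, \bm{\nu}_i, \bm{\mu}^*)$. Taylor-expanding $\nabla_x \mathcal{L}$ at $(\bm{x}^*, \bm{\lambda}^*, \bm{\mu}^*)$, where it vanishes by KKT, and noting $\bm{\Phi}(-\bm{r}(\bm{x}^*), \bm{\mu}^*) = \bm{0}$ by complementary slackness, combined with the linear multiplier bound from the first paragraph and $\|\bm{h}(\bm{z}_i)\| \leq c\|\bm{z}_i - \bm{x}^*\|$, give $E_1(\bm{z}_i, \bm{\nu}_i, \bm{\eta}_i')^2 \leq c\|\bm{z}_i - \bm{x}^*\|^2$, yielding (\ref{ConvergenceResult.5}).
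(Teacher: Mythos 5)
Your overall strategy is close to the paper's, and your treatment of (\ref{ConvergenceResult.3}) and (\ref{ConvergenceResult.5}) is essentially sound, but the core of the argument — the quadratic recursion behind (\ref{ConvergenceResult.1}), (\ref{ConvergenceResult.2}), (\ref{ConvergenceResult.4}) — has a genuine circularity. You define $\bm{z}^* := \bm{x}(\bm{u}^*)$ where $\bm{u}^*$ is the limit of $\bm{h}(\bm{z}_i)$, but you justify the existence of that limit by summability of $\|\bm{h}(\bm{z}_{i+1}) - \bm{h}(\bm{z}_i)\|$, which you admit requires the quadratic rate you are trying to prove; the bootstrap bound $\|\bm{z}_i - \bm{x}^*\| \leq C\delta$ alone gives only boundedness, not summability. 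The same circularity infects the claim that $\|\bm{h}(\bm{z}_i) - \bm{u}^*\|$ is quadratically small in $\|\bm{z}_i - \bm{z}^*\|$: a direct Lipschitz estimate gives only $\|\bm{h}(\bm{z}_i) - \bm{h}(\bm{z}^*)\| \leq c\|\bm{z}_i - \bm{z}^*\|$, and upgrading this to quadratic via the telescoping sum again presupposes the conclusion. The paper breaks this loop by using a \emph{per-iteration} moving target: $\bm{z}_i^*$ is the local minimizer of (\ref{prob:Ru}) with $\bm{u} = \bm{h}(\bm{z}_i)$, defined without any limit. Relative to that problem the residual term in Theorem~\ref{Theorem3.1} is $\|\bm{h}(\bm{z}_i) - \bm{h}(\bm{z}_i)\| = 0$ exactly, giving $\|\bm{z}_{i+1} - \bm{z}_i^*\| \leq c\|\bm{z}_i - \bm{z}_i^*\|^2$ outright, while the drift of the targets, $\|\bm{z}_{i+1}^* - \bm{z}_i^*\| \leq c\|\bm{h}(\bm{z}_{i+1}) - \bm{h}(\bm{z}_i)\| \leq c\|\bm{z}_i - \bm{z}_i^*\|^2$, is controlled by the Lipschitz stability of Lemma~\ref{lem:Lem13.1}. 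The recursion $r_{i+1} = O(r_i^2)$ for $r_{i+1} := \|\bm{z}_{i+1} - \bm{z}_i^*\| + \|\bm{z}_{i+1}^*-\bm{z}_i^*\|$ then delivers convergence, the limit point, and (\ref{ConvergenceResult.1})--(\ref{ConvergenceResult.2}) in one pass, with no a priori limit needed.

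A second gap is your proof of (\ref{ConvergenceResult.4}) via Lemma~\ref{lem:E1diff}: that lemma requires (\textbf{SCS}), which is unavailable under the (\textbf{LICQ})+(\textbf{SSOSC}) branch of the hypotheses, precisely the degenerate case the theorem is meant to cover. Indeed $E_{m,1}$ is generally not $\mathcal{C}^2$ at a degenerate KKT point because of the componentwise-minimum term $\bm{\Phi}$. The paper instead bounds $\|\bm{\Phi}(\bm{\eta}_i^*, -\bm{r}(\bm{z}_{i+1}))\|$ by a direct case analysis on the complementarity $\eta_{ij}^* r_j(\bm{z}_i^*) = 0$ (either $\eta_{ij}^* = 0$, so the minimum is $0$ since $\bm{r}(\bm{z}_{i+1}) \leq \bm{0}$, or $r_j(\bm{z}_i^*) = 0$, so the minimum is bounded by $|r_j(\bm{z}_{i+1}) - r_j(\bm{z}_i^*)| \leq \|\bm{A}\|\,\|\bm{z}_{i+1} - \bm{z}_i^*\|$), combined with the error-bound Theorem~\ref{thm:E1-error-bound} applied to the perturbed problem to get the matching lower bound $\|\bm{z}_i - \bm{z}_i^*\|^2 \leq c\,E_{m,1}(\bm{z}_i, \bm{\nu}_i, \bm{\eta}_i')$. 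You would need to replace your smoothness argument with something of this form to cover the degenerate case.
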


Before providing the proof for this theorem, we note that the iterative scheme in (\ref{scheme:MS1}) -- (\ref{scheme:MS3}) is the same update provided in the LS pseudocode, Algorithm~\ref{alg:cms}, without checking for a sufficient decrease in the error estimator at each iteration. %that would result in reusing the same multipliers at the next iteration. 
As a result, it follows from (\ref{ConvergenceResult.4}) that the criterion $E_{m,1} \left( \bm{z}_{i+1}, \bm{\nu}_{i+1}, \bm{\eta}_{i+1}' \right) \leq \delta E_{m,1} \left( \bm{z}_i, \bm{\nu}_i, \bm{\eta}_i' \right)$ in the Multiplier Step of the LS algorithm will always be satisfied under the hypotheses of Theorem~\ref{ConvergenceResult} for $i$ sufficiently large. Hence, this local convergence result can be applied to the Multiplier step of the LS algorithm without modification. We now provide a proof for Theorem~\ref{ConvergenceResult}.

\begin{proof}[Proof of Theorem~\ref{ConvergenceResult}]
We will perform our analysis of this method by considering the perturbed problem (\ref{prob:Ru}). As problem (\ref{prob:main-nlp}) is equivalent to problem (\ref{prob:Ru}) with $\bm{u} = \bm{0}$, we have that $\bm{x}^*$ is a solution of problem (\ref{prob:Ru}) with $\bm{u} = \bm{0}$. Additionally, we will let $\bm{z}_i^*$ denote the local minimizer of (\ref{prob:Ru}) with $\bm{u} = \bm{h}(\bm{z}_i)$ and $( \bm{\nu}_i^*, \bm{\eta}_i^* )$ be corresponding multipliers for which $E_{m,1} \left( \bm{z}_i^*, \bm{\nu}_i^*, \bm{\eta}_i^* \right) = 0$. To justify the existence of $( \bm{\nu}_i^*, \bm{\eta}_i^* )$, observe that the Lagrangian for problem (\ref{prob:Ru}) with $\bm{u} = \bm{h}(\bm{z}_i)$ is 
\begin{align}
f(\bm{x}) + \bm{\lambda}^{\intercal} \left( \bm{h}(\bm{x}) - \bm{h}(\bm{z}_i) \right) + \bm{\mu}^{\intercal} \bm{r} (\bm{x})
\end{align} 
which has gradient 
\begin{align}
\nabla f(\bm{x}) + \bm{\lambda}^{\intercal} \nabla \bm{h}(\bm{x}) + \bm{\mu}^{\intercal} \nabla \bm{r} (\bm{x}).
\end{align}
As this gradient is the same as the gradient of the Lagrangian for problem (\ref{prob:main-nlp}), we have that a KKT point $\left( \bm{z}_i^*, \bm{\nu}_i^*, \bm{\eta}_i^* \right)$ for problem (\ref{prob:Ru}) satisfies $E_{m,1} \left( \bm{z}_i^*, \bm{\nu}_i^*, \bm{\eta}_i^* \right) = 0$, as desired.

\iffalse % No longer necessary for analysis (keeping for now in case it comes up later)
Finally, we define 
\begin{align*}
\bm{y}_k^* = - \bm{h}(\bm{x}_k) - \nabla \bm{h}(\bm{x}_k) (\bm{x}_k^* - \bm{x}_k).
\end{align*}
\fi

Suppose that $\bm{z}_i$ is near $\bm{x}^*$. %By Lemma 13.1 in \cite{Hager1993} with $\bm{u} = \bm{h} (\bm{x}_k)$, we have that
By (\ref{lem:Lem13.1.1}) in Lemma~\ref{lem:Lem13.1}, we have $\bm{x} (\bm{h} (\bm{z}_i)) = \bm{z}_i^*$ and $\bm{x} (\bm{h} (\bm{x}^*)) = \bm{x}^*$. Hence, using (\ref{lem:Lem13.1.3}) in Lemma~\ref{lem:Lem13.1} with $\bm{u}_1 = \bm{h} (\bm{z}_i)$ and $\bm{u}_2 = \bm{h} (\bm{x}^*) = \bm{0}$, it follows that
\begin{align}
\| \bm{z}_i^* - \bm{x}^* \| \leq c \| \bm{h}( \bm{z}_i ) - \bm{h}(\bm{x}^*) \| = c \| \bm{h}( \bm{z}_i ) \|.
\end{align} 
Since $\| \bm{h}(\bm{z}_i) \| \to 0$ as $\bm{z}_i \to \bm{x}^*$, we have that $\bm{z}_i^*$ is near $\bm{x}^*$ when $\bm{z}_i$ is near $\bm{x}^*$. As $\bm{x} = \bm{z}_i$ satisfies the constraint $\bm{h} (\bm{x}) = \bm{u}$ when $\bm{u} = \bm{h} (\bm{z}_i)$, it follows from Theorem~\ref{Theorem3.1} that
\begin{align}
\| \bm{z}_{i+1} - \bm{z}_i^* \| &\leq c \| \bm{\nu}_i - \bm{\nu}_i^* \|^2 + c \| \bm{z}_i - \bm{z}_i^* \|^2 + c \| \bm{h} (\bm{z}_i) - \bm{h} (\bm{z}_i) \| \\
&= c \| \bm{\nu}_i - \bm{\nu}_i^* \|^2 + c \| \bm{z}_i - \bm{z}_i^* \|^2 \\
&\leq c \left\| \begin{bmatrix} \bm{\nu}_i, \ \bm{\eta}_i \end{bmatrix} - \begin{bmatrix} \bm{\nu}_i^*, \ \bm{\eta}_i^* \end{bmatrix} \right\|^2 + c \| \bm{z}_i - \bm{z}_i^* \|^2, \label{NPASA8.1}
\end{align}
for some constant $c$. As a note of interest to the reader, observe that the penalized Lagrangian function for problem (\ref{prob:Ru}) is given by
\begin{align}
\mathcal{L}_{p} (\bm{\nu}, \bm{z}) = f(\bm{z}) + \bm{\nu}^{\intercal} \bm{h} (\bm{z}) + p \| \bm{h} (\bm{z}) - \bm{u} \|^2.
\end{align}
Hence, when we take $\bm{u} = \bm{h} (\bm{z}_i)$ we have that the penalized Lagrangian function for problem (\ref{prob:Ru}) is equal to $\mathcal{L}_{p}^i (\bm{\nu}, \bm{z})$.
%
\iffalse % No longer needed since corollary covers this part
Next, observe that
\begin{align}
\| \bm{z}_k - \bm{x}_k^* \|^2 &= \| \bm{z}_k - \bm{x}_k + \bm{x}_k - \bm{x}_k^* \|^2 \nonumber \\
&\leq \| \bm{z}_k - \bm{x}_k \|^2 + 2 \left| \langle \bm{z}_k - \bm{x}_k, \bm{x}_k - \bm{x}_k^* \rangle \right| + \| \bm{x}_k - \bm{x}_k^* \|^2 \nonumber \\
&\leq 2 \| \bm{z}_k - \bm{x}_k \|^2 + 2 \| \bm{x}_k - \bm{x}_k^* \|^2 \label{NPASA8.1.1} \\
&\leq 2 \left( c_1 \| h (\bm{x}_k) \| \right)^2 + 2 \| \bm{x}_k - \bm{x}_k^* \|^2 \label{NPASA8.1.2} \\
&= 2 c_1^2 \| h (\bm{x}_k) \|^2 + 2 \| \bm{x}_k - \bm{x}_k^* \|^2, \label{NPASA8.2}
\end{align}
where (\ref{NPASA8.1.1}) follows from Cauchy and (\ref{NPASA8.1.2}) follows from Theorem~\ref{NewtonYSchemeThm} with $\bm{z}_0 = \bm{x}_k$. Combining (\ref{NPASA8.1}) and (\ref{NPASA8.2}) yields
\begin{align}
\| \bm{x}_{k+1} - \bm{x}_k^* \| \leq c \| [\bm{\lambda}_k \ \bm{\mu}_k] - [\bm{\lambda}_k^* \ \bm{\mu}_k^*] \|^2 + c \| \bm{x}_k - \bm{x}_k^* \|^2 + c \| h (\bm{x}_k) \|^2. \label{NPASA8.2.1}
\end{align}
\fi
%
%Next, by either assumptions (\textbf{LICQ}), (\textbf{SCS}), and (\textbf{SOSC}) or assumptions (\textbf{LICQ}) and (\textbf{SSOSC}) we have that the hypotheses of Lemma~\ref{lem:Lem13.2} are satisfied. Hence, 
As the hypotheses of Lemma~\ref{lem:AltLem13.2} are satisfied,
it follows from (\ref{lem:AltLem13.2.3}) in Lemma~\ref{lem:AltLem13.2} with $\bm{u}_1 = \bm{z}_i$ and $\bm{u}_2 = \bm{z}_i^*$ that 
\begin{align}
\left\| \begin{bmatrix} \bm{\nu}_i, \ \bm{\eta}_i \end{bmatrix} - \begin{bmatrix} \bm{\nu}_i^*, \ \bm{\eta}_i^* \end{bmatrix} \right\| = O \left( \| \bm{z}_i - \bm{z}_i^* \| \right). \label{NPASA8.3}
\end{align}
Similarly, $\left\| \begin{bmatrix} \bm{\nu}_{i+1}, \ \bm{\eta}_{i+1} \end{bmatrix} - \begin{bmatrix} \bm{\nu}_i^*, \ \bm{\eta}_i^* \end{bmatrix} \right\| = O \left( \| \bm{z}_{i+1} - \bm{z}_i^* \| \right)$ follows from an additional application of (\ref{lem:AltLem13.2.3}) in Lemma~\ref{lem:AltLem13.2}. Now (\ref{NPASA8.1}) and (\ref{NPASA8.3}) together imply that
\begin{align}
\| \bm{z}_{i+1} - \bm{z}_i^* \| + \left\| \begin{bmatrix} \bm{\nu}_{i+1}, \ \bm{\eta}_{i+1} \end{bmatrix} - \begin{bmatrix} \bm{\nu}_i^*, \ \bm{\eta}_i^* \end{bmatrix} \right\| \leq c \| \bm{z}_i - \bm{z}_i^* \|^2. \label{NPASA8.3.1}
\end{align}

By our choice of Problem (\ref{prob:Ru}), for $\bm{u} = \bm{h}(\bm{z}_i)$ we have that $\bm{h}(\bm{z}_i^*) = \bm{h}(\bm{z}_i)$. Hence,
\begin{align}
\| \bm{h}(\bm{z}_{i+1}) - \bm{h}(\bm{z}_i) \| &= \| \bm{h}(\bm{z}_{i+1}) - \bm{h}(\bm{z}_i^*) \| \leq c \| \bm{z}_{i+1} - \bm{z}_i^* \| \leq c \| \bm{z}_i - \bm{z}_i^* \|^2, \label{NPASA8.10}
\end{align}
where (\ref{NPASA8.10}) follows from performing a first order Taylor expansion of $\bm{h}$ and using (\ref{NPASA8.3.1}). Next, from (\ref{lem:Lem13.1.1}) in Lemma~\ref{lem:Lem13.1} we have that $\bm{x} (\bm{h} (\bm{z}_{i+1})) = \bm{z}_{i+1}^*$ and $\bm{x} (\bm{h} (\bm{z}_i)) = \bm{z}_i^*$. Hence, using (\ref{lem:Lem13.1.3}) in Lemma~\ref{lem:Lem13.1} with $\bm{u}_1 = \bm{h} (\bm{z}_{i+1})$ and $\bm{u}_2 = \bm{h} (\bm{z}_i)$ together with (\ref{NPASA8.10}) yields
\begin{align}
\| \bm{z}_{i+1}^* - \bm{z}_i^* \| &\leq c \| \bm{h}(\bm{z}_{i+1}) - \bm{h}(\bm{z}_i) \| \leq c \| \bm{z}_i - \bm{z}_i^* \|^2. \label{NPASA8.11}
\end{align}
Now define
\begin{align}
r_{i+1} := \| \bm{z}_{i+1} - \bm{z}_i^* \| + \| \bm{z}_{i+1}^* - \bm{z}_i^* \|. \label{NPASA8.12}
\end{align}
We claim that $r_{i+1} = O (r_i^2)$. To prove this claim, first note that combining (\ref{NPASA8.3.1}), (\ref{NPASA8.11}), and (\ref{NPASA8.12}) yields
\begin{align}
r_{i+1} \leq c \| \bm{z}_i - \bm{z}_i^* \|. \label{NPASA8.12.1}
\end{align}
By Cauchy, we have
\begin{align}
\| \bm{z}_i - \bm{z}_i^* \| \leq 2 \| \bm{z}_i - \bm{z}_{i-1}^* \|^2 + 2 \| \bm{z}_i^* - \bm{z}_{i-1}^* \|^2 \label{NPASA8.13}
\end{align}
which combined with (\ref{NPASA8.12.1}) yields $r_{i+1} = O (r_i^2)$, as claimed.
%%%%%%%%%%%%%%%%%%%%%%%%%%%%%%%%%%%%%%%%%%%%%%%
% No longer needed since this term doesn't show up in bound but might be useful later
\iffalse 
Additionally, performing a second order Taylor expansion and applying $\nabla \bm{h} (\bm{z}_k) (\bm{x}_{k+1} - \bm{z}_k) = 0$ from the constraint in (A2) yields that
\begin{align}
\| h (\bm{x}_{k+1}) \| &\leq \| h (\bm{z}_k) \| + c \| \bm{x}_{k+1} - \bm{z}_k \|^2 \leq \| h (\bm{z}_k) \| + c \| \bm{x}_{k+1} - \bm{x}_k^* \|^2 + c \| \bm{z}_k - \bm{x}_k^* \|^2 \label{NPASA8.14}
\end{align}
As $\| h (\bm{z}_k) \| \leq c \| h (\bm{x}_k) \|^2$ from (\ref{NewtonYSchemeThm.3}) in Theorem~\ref{NewtonYSchemeThm}, substituting the bounds established in (\ref{NPASA8.2}) and (\ref{NPASA8.3.1}) into (\ref{NPASA8.14}) yields
\begin{align}
\| h (\bm{x}_{k+1}) \| &\leq c \| h (\bm{x}_k) \|^2 + c \| \bm{x}_k - \bm{x}_k^* \|^2 + c \| h (\bm{x}_k) \|^4 + c \| \bm{x}_k - \bm{x}_k^* \|^4. \nonumber
\end{align}
Hence, 
\begin{align}
\| h (\bm{x}_{k+1}) \| = O (\| h (\bm{x}_k) \|^2) + O (\| \bm{x}_k^* - \bm{x}_k \|^2). \label{NPASA8.15}
\end{align}
\fi
%%%%%%%%%%%%%%%%%%%%%%%%%%%%%%%%%%%%%%%%%%%%%%%
So there exists a constant $\gamma$ such that $r_{i+1} \leq \gamma r_i^2$. Then we have
\begin{align}
r_{i+1} 
&\leq \frac{1}{\gamma} \left( \gamma r_i \right)^2 \leq \ldots \leq \frac{1}{\gamma} \left( \gamma r_1 \right)^{2^i} = r_1 \left( \gamma r_1 \right)^{2^i - 1}.
\end{align}
By applying (\ref{NPASA8.3.1}) and (\ref{NPASA8.11}) together with the triangle and Cauchy inequalities we have
\begin{align}
r_1
&= \| \bm{z}_1 - \bm{z}_0^* \| + \| \bm{z}_1^* - \bm{z}_0^* \| %\nonumber \\
\leq c \| \bm{z}_0 - \bm{z}_0^* \|^2 %\tag{Applying \ref{NPASA8.3.1} and \ref{NPASA8.11}} \\
\leq c \| \bm{z}_0 - \bm{x}^* \|^2 + c \| \bm{z}_0^* - \bm{x}^* \|^2. %\tag{Applying triangle inequality and Cauchy}
\end{align}
Using (\ref{lem:Lem13.1.3}) in Lemma~\ref{lem:Lem13.1} with $\bm{u}_1 = \bm{h} (\bm{z}_0)$ and $\bm{u}_2 = \bm{h} (\bm{x}^*)$ it follows that
\begin{align}
\| \bm{z}_0^* - \bm{x}^* \|
&\leq c \| \bm{h} (\bm{z}_0) - \bm{h} (\bm{x}^*) \| 
\leq c \| \bm{z}_0 - \bm{x}^* \|
\end{align}
which combined with the bound on $r_1$ yields
\begin{align}
r_1 &\leq c \| \bm{z}_0 - \bm{x}^* \|^2. \label{NPASA8.14.4}
\end{align}
It now follows that $r_1 \to 0$ as $\bm{z}_0 \to \bm{x}^*$. Thus, there exists a constant $c$ such that $r_i \leq c r_1 2^{-2^i - 1}$, for all $\bm{z}_0$ sufficiently close to $\bm{x}^*$. It now follows that the sequence $\{ \bm{z}_i \}_{i = 0}^{\infty}$ is Cauchy since
\begin{align}
\| \bm{z}_{i+1} - \bm{z}_i \| 
&\leq \| \bm{z}_{i+1} - \bm{z}_i^* \| + \| \bm{z}_i^* - \bm{z}_{i-1}^* \| + \| \bm{z}_i - \bm{z}_{i-1}^* \| %\\
\leq r_{i+1} + r_i %\\
\leq c r_1 2^{-2^i}, \label{NPASA8.15}
\end{align}
for all $i \geq 1$. Thus, $\{ \bm{z}_i \}_{i = 0}^{\infty}$ has a limit, say $\bm{z}^*$, and it follows from (\ref{NPASA8.15}) that $\bm{z}^*$ satisfies
\begin{align}
\| \bm{z}_{i} - \bm{z}^* \| &\leq c 2^{-2^i},
\end{align}
for all $i \geq 0$, hence, establishing (\ref{ConvergenceResult.1}). In particular, note that (\ref{NPASA8.15}) also implies that $\{ \bm{z}_i^* \}_{i = 0}^{\infty}$ converges to $\bm{z}^*$.

Now we claim that $\{ \left[ \bm{\nu}_i, \bm{\eta}_i \right] \}_{i = 0}^{\infty}$ converges to a limit. Let $\varepsilon > 0$ be given. As $\bm{z}_i \to \bm{z}^*$ and $\bm{z}_i^* \to \bm{z}^*$, there exists a $M \geq 0$ such that $\| \bm{z}_i - \bm{z}_i^* \|^2 < \varepsilon$, for all $i \geq M$. Hence, 
\begin{align}
\| [\bm{\nu}_{i+1} \ \bm{\eta}_{i+1}] - [\bm{\nu}_i \ \bm{\eta}_i] \| 
&\leq \| [\bm{\nu}_{i+1} \ \bm{\eta}_{i+1}] - [\bm{\nu}_i^* \ \bm{\eta}_i^*] \| + \| [\bm{\nu}_i \ \bm{\eta}_i] - [\bm{\nu}_i^* \ \bm{\eta}_i^*] \| \\
&\leq c \| \bm{z}_i - \bm{z}_i^* \|^2 + c \| \bm{z}_i - \bm{z}_i^* \|^2 \\
&\leq 2c \varepsilon,
\end{align}
for all $i \geq M$. So $\{ \left[ \bm{\nu}_i, \bm{\eta}_i \right] \}_{i = 0}^{\infty}$ is Cauchy and therefore has a limit, say $\left[ \bm{\nu}^*, \bm{\eta}^* \right]$. Similarly, it follows that $\{ \left[ \bm{\nu}_i^*, \bm{\eta}_i^* \right] \}_{i = 0}^{\infty}$ converges to $\left[ \bm{\nu}^*, \bm{\eta}^* \right]$. As $E_{m,1}$ is continuous, we now have that
\begin{align}
\min_{\bm{\eta} \geq \bm{0}} E_{m,1} (\bm{z}^*, \bm{\nu}^*, \bm{\eta}) 
&\leq E_{m,1} (\bm{z}^*, \bm{\nu}^*, \bm{\eta}^*) 
= \lim_{i \to \infty} E_{m,1} (\bm{z}_i^*, \bm{\nu}_i^*, \bm{\eta}_i^*). \label{NPASA8.15.1}
\end{align}
Since $E_{m,1} (\bm{z}_i^*, \bm{\nu}_i^*, \bm{\eta}_i^*) = 0$ for all $i \geq 0$, it follows that %$E_{m,1} (\bm{z}^*, \bm{\nu}^*, \bm{\eta}^*) = 0$ 
from (\ref{NPASA8.15.1}) that (\ref{ConvergenceResult.0}) holds.

Recalling (\ref{NPASA8.3.1}) and (\ref{NPASA8.11}) it follows that
\begin{align}
r_1 &= \| \bm{z}_{1} - \bm{z}_0^* \| + \| \bm{z}_{1}^* - \bm{z}_0^* \| \leq c \| \bm{z}_{0} - \bm{z}_0^* \|^2 + c \| h (\bm{z}_0) \|^2. \label{NPASA8.16}
\end{align}
Using $\bm{h}(\bm{x}^*) = \bm{0}$ we have that
\begin{align}
\| \bm{h} (\bm{z}_0) \| 
= \| \bm{h}(\bm{z}_0) - \bm{h}(\bm{x}^*) \| 
= \| \nabla \bm{h}(\xi_0) (\bm{z}_0 - \bm{x}^*) \| 
\leq c \| \bm{z}_0 - \bm{x}^* \|, \label{NPASA8.17}
\end{align}
for some $\bm{\xi}_0$ between $\bm{z}_0$ and $\bm{x}^*$. Also, using (\ref{lem:Lem13.1.3}) in Lemma~\ref{lem:Lem13.1} with $\bm{u}_1 = \bm{h} (\bm{z}_0)$ and $\bm{u}_2 = \bm{h} (\bm{x}^*) = \bm{0}$ together with the fact that $\bm{h} (\bm{z}_0^*) = \bm{h} (\bm{z}_0)$ we have that
\begin{align}
\| \bm{z}_0^* - \bm{x}^* \| 
\leq c \| \bm{h}(\bm{z}_0^*) - \bm{h}(\bm{x}^*) \| 
= c \| \bm{h}(\bm{z}_0) - \bm{h}(\bm{x}^*) \| 
\leq c \| \bm{z}_0 - \bm{x}^* \|. \label{NPASA8.17.1}
\end{align}
Hence,
\begin{align}
\| \bm{z}_0 - \bm{z}_0^* \| 
\leq \| \bm{z}_0 - \bm{x}^* \| + \| \bm{z}_0^* - \bm{x}^* \| 
\leq c \| \bm{z}_0 - \bm{x}^* \| \label{NPASA8.17.2}
\end{align}
which, when substituted into (\ref{NPASA8.16}) with (\ref{NPASA8.17}), yields
\begin{align}
r_1 &\leq c \| \bm{z}_{0} - \bm{x}^* \|^2. \label{NPASA8.18}
\end{align}
Next, applying the triangle inequality and using (\ref{NPASA8.15}) and (\ref{NPASA8.17}) we find that
\begin{align}
\| \bm{z}_{i+1} - \bm{z}_1 \| 
&\leq \sum_{j = 1}^{i} \| \bm{z}_{j+1} - \bm{z}_{j} \| 
\leq c r_1 \sum_{j = 1}^{i} 2^{-2^i} 
\leq c r_1 
\leq c \| \bm{z}_{0} - \bm{x}^* \|^2,
\end{align}
which proves (\ref{ConvergenceResult.2}).

Next, from (\ref{NPASA8.17.1}) we have that 
\begin{align}
\| \bm{z}_0^* - \bm{x}^* \| 
\leq c \| \bm{h}(\bm{z}_0^*) - \bm{h}(\bm{x}^*) \| 
= c \| \bm{h}(\bm{z}_0) \|
\end{align}
and from (\ref{NPASA8.3.1}) and (\ref{NPASA8.17.2}) we have $\| \bm{z}_1 - \bm{z}_0^* \| \leq c \| \bm{z}_0 - \bm{x}^* \|^2$. Using these together with the triangle inequality and (\ref{ConvergenceResult.3}) we have
\begin{align}
\| \bm{z}_{i+1} - \bm{x}^* \| 
&\leq \| \bm{z}_{i+1} - \bm{z}_1 \| + \| \bm{z}_1 - \bm{z}_0^* \| + \| \bm{z}_0^* - \bm{x}^* \|
\leq c \| \bm{z}_0 - \bm{x}^* \|^2 + c \| \bm{h} (\bm{z}_0) \|,
\end{align}
concluding the proof of (\ref{ConvergenceResult.3}).

To prove the remaining parts of the theorem, we proceed by defining
\begin{align}
E_{\bm{u}} \left( \bm{z}, \bm{\nu}, \bm{\eta} \right) 
= \sqrt{ \| \nabla \mathcal{L} \left( \bm{z}, \bm{\nu}, \bm{\eta} \right) \|^2 + \| \bm{h}(\bm{z}) - \bm{u} \|^2 } \label{def:Eu}
\end{align}
where $\bm{u}$ is a fixed vector and we note that $E_{\bm{u}} \in \mathcal{C}^2$ in a neighborhood of $(\bm{x}^*, \bm{\lambda}^*, \bm{\mu}^*)$ by our hypotheses. First, we show that (\ref{ConvergenceResult.4}) holds. As $(\bm{z}_i^*, \bm{\nu}_i^*, \bm{\eta}_i^*)$ is sufficiently close to $(\bm{x}^*, \bm{\lambda}^*, \bm{\mu}^*)$, by fixing $\bm{\eta} = \bm{\eta}_i^*$ we can apply Lemma 13.2 in \cite{Hager1993} to $E_{\bm{u}} \left( \bm{z}, \bm{\nu}, \bm{\eta}_i^* \right)$ with $\bm{u} = \bm{h} (\bm{z}_i)$ to yield that
\begin{align}
\| \bm{z}_{i+1} - \bm{z}_i^* \| + \| \bm{\nu}_{i+1} - \bm{\nu}_i^* \| 
&\geq c_1 E_{\bm{u}} \left( \bm{z}_{i+1}, \bm{\nu}_{i+1}, \bm{\eta}_i^* \right) \\
&= c_1 \sqrt{ \| \nabla \mathcal{L} \left( \bm{z}_{i+1}, \bm{\nu}_{i+1}, \bm{\eta}_i^* \right) \|^2 + \| \bm{h}(\bm{z}_{i+1}) - \bm{h}(\bm{z}_i) \|^2 } \\
&\geq c_1 \| \nabla \mathcal{L} \left( \bm{z}_{i+1}, \bm{\nu}_{i+1}, \bm{\eta}_i^* \right) \|, \label{NPASA8.7}
\end{align}
for some constant $c_1$. From (\ref{lem:AltLem13.2.3}) in Lemma~\ref{lem:AltLem13.2} with $\bm{u}_1 = \bm{z}_{i+1}$ and $\bm{u}_2 = \bm{z}_i^*$ we have that
\begin{align}
\left\| \bm{\nu}_{i+1} - \bm{\nu}_i^* \right\| = O \left( \| \bm{z}_{i+1} - \bm{z}_i^* \| \right) \label{NPASA8.7.1}
\end{align}
which combined with (\ref{NPASA8.7}) yields
\begin{align}
\| \bm{z}_{i+1} - \bm{z}_i^* \| ^2
&\geq c_1 \| \nabla \mathcal{L} \left( \bm{z}_{i+1}, \bm{\nu}_{i+1}, \bm{\eta}_i^* \right) \|^2. \label{NPASA8.7.2}
\end{align}
Now recall the function $\bm{\Phi} : \mathbb{R}^m \times \mathbb{R}^m \to \mathbb{R}^m$ where the $j$th component of $\bm{\Phi}$ is defined to be $\Phi_j (\bm{a}, \bm{b}) = \min \{ a_j, b_j \}$. As $(\bm{z}_i^*, \bm{\nu}_i^*, \bm{\eta}_i^*)$ is a KKT point for problem (\ref{prob:Ru}), we have that $\eta_{ij}^* r_j (\bm{z}_i^*) = 0$, for $1 \leq j \leq m$. Now fix $j \in \{ 1, 2, \ldots, m \}$. Suppose $\eta_{ij}^* = 0$. As $\bm{r} (\bm{z}_{i+1}) \leq \bm{0}$, we have
\begin{align}
\Phi_j (\bm{\eta}_i^*, -\bm{r} (\bm{z}_{i+1}))^2
&= \min \{ \eta_{ij}^*, -r_j (\bm{z}_{i+1}) \}^2
= \min \{ 0, -r_j (\bm{z}_{i+1}) \}^2
= 0. \label{NPASA8.7.3}
\end{align}
On the other hand, suppose $r_j (\bm{z}_i^*) = 0$. As $\bm{r} (\bm{z}) = \bm{A} \bm{z} - \bm{b}$, we have that
\begin{align}
\Phi_j (\bm{\eta}_i^*, -\bm{r} (\bm{z}_{i+1}))^2
&= \min \{ \eta_{ij}^*, -r_j (\bm{z}_{i+1}) \}^2 \\
&\leq \left( r_j (\bm{z}_{i+1}) \right)^2 \\
&= \left( r_j (\bm{z}_{i+1}) - r_j (\bm{z}_i^*) \right)^2 \\
&\leq \| \bm{A} \|^2 \| \bm{z}_{i+1} - \bm{z}_i^* \|^2. \label{NPASA8.7.4}
\end{align}
Combining (\ref{NPASA8.7.3}) and (\ref{NPASA8.7.4}) we have that
\begin{align}
c_1 \| \bm{\Phi} (\bm{\eta}_i^*, -\bm{r} (\bm{z}_{i+1})) \|^2
&\leq \| \bm{z}_{i+1} - \bm{z}_i^* \|^2. \label{NPASA8.7.5}
\end{align}
Recalling that $E_{m,1} \left( \bm{z}, \bm{\nu}, \bm{\eta} \right) = \| \nabla \mathcal{L} \left( \bm{z}, \bm{\nu}, \bm{\eta} \right) \|^2 + \| \bm{\Phi} (\bm{\eta}, -\bm{r} (\bm{z})) \|^2$ it follows from (\ref{NPASA8.7.2}) and (\ref{NPASA8.7.5}) that 
\begin{align}
c_1 E_{m,1} \left( \bm{z}_{i+1}, \bm{\nu}_{i+1}, \bm{\eta}_i^* \right) 
\leq \| \bm{z}_{i+1} - \bm{z}_i^* \|^2. \label{NPASA8.7.6}
\end{align}
Recalling (\ref{scheme:MS2}) we have that $\bm{\eta}_{i+1}' \in \arg\min \left\{ E_{m,1} \left( \bm{z}_{i+1}, \bm{\nu}_{i+1}, \bm{\eta} \right) : \bm{\eta} \geq \bm{0} \right\}$. As $\bm{\eta}_{i+1}^* \geq \bm{0}$, it follows from (\ref{NPASA8.7.6}) and (\ref{scheme:MS2}) that
\begin{align}
c_1 E_{m,1} \left( \bm{z}_{i+1}, \bm{\nu}_{i+1}, \bm{\eta}_{i+1}' \right) 
&\leq c_1 E_{m,1} \left( \bm{z}_{i+1}, \bm{\nu}_{i+1}, \bm{\eta}_i^* \right)
\leq \| \bm{z}_{i+1} - \bm{z}_i^* \|^2. \label{NPASA8.7.8}
\end{align}
Next, applying Theorem~\ref{thm:E1-error-bound} to problem (\ref{prob:Ru}) with $\bm{u} = \bm{h} (\bm{z}_i)$ yields that
\begin{align}
\| \bm{z}_i - \bm{z}_i^* \|^2 
%&\leq c_2 \left( \| \nabla \mathcal{L} \left( \bm{z}_i, \bm{\nu}_i, \bm{\eta}_i' \right) \|^2 + \| \bm{h}(\bm{z}_i) - \bm{h}(\bm{z}_i) \|^2 + \| \bm{\Phi} (\bm{\eta}_i', -\bm{r} (\bm{z}_i)) \|^2 \right) \nonumber \\
&\leq c_2 \left( \| \nabla \mathcal{L} \left( \bm{z}_i, \bm{\nu}_i, \bm{\eta}_i' \right) \|^2 + \| \bm{\Phi} (\bm{\eta}_i', -\bm{r} (\bm{z}_i)) \|^2 \right) %\nonumber \\
= c_2 E_{m,1} \left( \bm{z}_i, \bm{\nu}_i, \bm{\eta}_i' \right). \label{NPASA8.7.9}
\end{align}
Now combining (\ref{NPASA8.3.1}), (\ref{NPASA8.7.8}), and (\ref{NPASA8.7.9}) yields
\begin{align}
E_{m,1} \left( \bm{z}_{i+1}, \bm{\nu}_{i+1}, \bm{\eta}_{i+1}' \right)
&\leq c E_{m,1} \left( \bm{z}_i, \bm{\nu}_i, \bm{\eta}_i' \right)^2, \label{NPASA8.7.10}
\end{align}
which proves (\ref{ConvergenceResult.4}).

Now we show that (\ref{ConvergenceResult.5}) holds. By definition of (\ref{def:Eu}) and the fact that $\bm{h} (\bm{x}^*) = \bm{0}$, note that $E_{\bm{u}} \left( \bm{z}, \bm{\nu}, \bm{\eta} \right) = \sqrt{\| \nabla \mathcal{L} \left( \bm{z}, \bm{\nu}, \bm{\eta} \right) \|^2 + \| \bm{h} (\bm{z}) \|^2}$ when $\bm{u} = \bm{h}(\bm{x}^*)$. So by fixing $\bm{\eta} = \bm{\mu}^*$ then applying Lemma 13.2 in \cite{Hager1993} to $E_{\bm{u}} \left( \bm{z}, \bm{\nu}, \bm{\mu}^* \right)$ with $\bm{u} = \bm{h}(\bm{x}^*)$ we have that there exists a constant $c_1$ such that
\begin{align}
\| \bm{z}_{i+1} - \bm{x}^* \| + \| \bm{\nu}_{i+1} - \bm{\lambda}^* \|
&\geq c_1 E_{\bm{u}} \left( \bm{z}_{i+1}, \bm{\nu}_{i+1}, \bm{\mu}^* \right) \\
&= c_1 \sqrt{\| \nabla \mathcal{L} \left( \bm{z}_{i+1}, \bm{\nu}_{i+1}, \bm{\mu}^* \right) \|^2 + \| \bm{h} (\bm{z}_{i+1}) \|^2}. \label{NPASA8.8}
\end{align}
From (\ref{lem:AltLem13.2.3}) in Lemma~\ref{lem:AltLem13.2} with $\bm{u}_1 = \bm{z}_{i+1}$ and $\bm{u}_2 = \bm{x}^*$ we have
\begin{align}
\left\| \bm{\nu}_{i+1} - \bm{\lambda}^* \right\| = O \left( \| \bm{z}_{i+1} - \bm{x}^* \| \right) \label{NPASA8.8.1}
\end{align}
which combined with (\ref{NPASA8.8}) yields
\begin{align}
\| \bm{z}_{i+1} - \bm{x}^* \| ^2
&\geq c_1 \left( \| \nabla \mathcal{L} \left( \bm{z}_{i+1}, \bm{\nu}_{i+1}, \bm{\mu}^* \right) \|^2 + \| \bm{h} (\bm{z}_{i+1}) \|^2 \right). \label{NPASA8.8.2}
\end{align}
%%%%%%%%%%%%%%%%%%%%%%%%%%%%%%%%%%%%%%%%%
\iffalse
Now recall the function $\bm{\Phi} : \mathbb{R}^m \times \mathbb{R}^m \to \mathbb{R}^m$ where the $j$th component of $\bm{\Phi}$ is defined to be $\Phi_j (\bm{a}, \bm{b}) = \min \{ a_j, b_j \}$. As $(\bm{x}^*, \bm{\lambda}^*, \bm{\mu}^*)$ is a KKT point for problem (\ref{prob:main-nlp}), we have that $\mu_j^* r_j (\bm{x}^*) = 0$, for $1 \leq j \leq m$. Now fix $j \in \{ 1, 2, \ldots, m \}$. Suppose $\mu_j^* = 0$. As $\bm{r} (\bm{z}_{i+1}) \leq \bm{0}$, we have
\begin{align}
\Phi_j (\bm{\mu}^*, -\bm{r} (\bm{z}_{i+1}))^2
&= \min \{ \mu_j^*, -r_j (\bm{z}_{i+1}) \}^2
= \min \{ 0, -r_j (\bm{z}_{i+1}) \}^2
= 0. \label{NPASA8.8.3}
\end{align}
On the other hand, suppose $r_j (\bm{x}^*) = 0$. Then
\begin{align}
\Phi_j (\bm{\mu}^*, -\bm{r} (\bm{z}_{i+1}))^2
&= \min \{ \mu_j^*, -r_j (\bm{z}_{i+1}) \}^2 \nonumber \\
&\leq \left( r_j (\bm{z}_{i+1}) \right)^2 \nonumber \\
&= \left( r_j (\bm{z}_{i+1}) - r_j (\bm{x}^*) \right)^2 \nonumber \\
&\leq c \| \bm{z}_{i+1} - \bm{x}^* \|^2. \label{NPASA8.8.4}
\end{align}
Combining (\ref{NPASA8.8.3}) and (\ref{NPASA8.8.4}) we have that there exists a constant $c$ such that
\begin{align}
\| \bm{\Phi} (\bm{\mu}^*, -\bm{r} (\bm{z}_{i+1})) \|^2
&\leq c \| \bm{z}_{i+1} - \bm{x}^* \|^2. \label{NPASA8.8.5}
\end{align}
\fi
%%%%%%%%%%%%%%%%%%%%%%%%%%%%%%%%%%%%%%%%%
By an argument similar to the one establishing (\ref{NPASA8.7.5}), we can show that
\begin{align}
c_1 \| \bm{\Phi} (\bm{\mu}^*, -\bm{r} (\bm{z}_{i+1})) \|^2
&\leq \| \bm{z}_{i+1} - \bm{x}^* \|^2. \label{NPASA8.8.5}
\end{align}
%Recalling that $E_{m,1} \left( \bm{x}, \bm{\lambda}, \bm{\mu} \right) = \| \nabla \mathcal{L} \left( \bm{x}, \bm{\lambda}, \bm{\mu} \right) \|^2 + \| \bm{\Phi} (\bm{\mu}, -\bm{r} (\bm{x})) \|^2$ 
It now follows from (\ref{NPASA8.8.2}) and (\ref{NPASA8.8.5}) that 
\begin{align}
c_1 E_{m,1} \left( \bm{z}_{i+1}, \bm{\nu}_{i+1}, \bm{\mu}^* \right) 
\leq \| \bm{z}_{i+1} - \bm{x}^* \|^2. \label{NPASA8.8.6}
\end{align}
As $\bm{\mu}^* \geq \bm{0}$, using the definition of $\bm{\eta}_{i+1}'$ in (\ref{scheme:MS2}) with (\ref{NPASA8.8.6}) we now have
\begin{align}
c_1 E_{m,1} \left( \bm{z}_{i+1}, \bm{\nu}_{i+1}, \bm{\eta}_{i+1}' \right) 
&\leq c_1 E_{m,1} \left( \bm{z}_{i+1}, \bm{\nu}_{i+1}, \bm{\mu}^* \right)
\leq \| \bm{z}_{i+1} - \bm{x}^* \|^2. \label{NPASA8.8.8}
\end{align}
Finally, recalling that $E_1 \left( \bm{z}, \bm{\nu}, \bm{\eta} \right)^2 = E_{m,1} \left( \bm{z}, \bm{\nu}, \bm{\eta} \right) + \| \bm{h} (\bm{z}) \|^2$ it follows from (\ref{NPASA8.8.2}) and (\ref{NPASA8.8.8}) that
\begin{align}
E_1 \left( \bm{z}_{i+1}, \bm{\nu}_{i+1}, \bm{\eta}_{i+1}' \right) \leq c \| \bm{z}_{i+1} - \bm{x}^* \|, \label{NPASA8.8.9}
\end{align}
which concludes the proof of (\ref{ConvergenceResult.5}).
\end{proof}

%%%%%%%%%%%%%%%%%%%%%%%%%%%%%%%%%%%%%%%%%%%%%%%%%%%%%%%%%%%%%%%%%%%%%%%%%%%%%%%
%%% ------------------------ CMS: LOCAL CONVERGENCE ----------------------- %%%
%%%%%%%%%%%%%%%%%%%%%%%%%%%%%%%%%%%%%%%%%%%%%%%%%%%%%%%%%%%%%%%%%%%%%%%%%%%%%%%
\subsection{Local Convergence of Phase Two of NPASA} \label{subsec:npasa-phase-two-local-conv}
Now that we have established local convergence results for the constraint and multiplier step problems, we are ready to provide a local convergence result for phase two of NPASA. Phase two of NPASA is comprised of the LS algorithm, Algorithm~\ref{alg:cms}, and branching criterion. 
%TODO
%We note that (\ref{cor:PhaseTwoConvergence.0}) and its proof are similar to Corollary 5.1 and its proof from \cite{Hager1993}.

\begin{corollary} \label{cor:PhaseTwoConvergence}
Suppose $f, \bm{h} \in \mathcal{C}^2$ and that either assumptions (\textbf{LICQ}), (\textbf{SCS}), and (\textbf{SOSC}) are satisfied at $(\bm{x}^*, \bm{\lambda}^*, \bm{\mu}^*)$ or assumptions (\textbf{LICQ}) and (\textbf{SSOSC}) are satisfied at $(\bm{x}^*, \bm{\lambda}^*, \bm{\mu}^*)$. Then there exists a neighborhood $\mathcal{N}_1$ of $\bm{x}^*$ and a constant $c$ such that, for every $\bm{x}_0 \in \mathcal{N}_1 \cap \Omega$ and every $k \geq 0$, the iterates of phase two of NPASA satisfy
\iffalse
\begin{itemize}
\item[(\ref{cor:PhaseTwoConvergence}.0)] $\| \bm{x}_{k+1} - \bm{x}^* \| \leq c \| \bm{x}_{k} - \bm{x}^* \|^2$,
\item[(\ref{cor:PhaseTwoConvergence}.1)] $E_1 \left( \bm{x}_{k+1}, \bm{\lambda}_{k+1}, \bm{\mu}_{k+1} \right) \leq c E_1 \left( \bm{x}_k, \bm{\lambda}_k, \bm{\mu}_k \right)^2$.
\end{itemize}
\fi
\begin{align}
    &\| \bm{x}_{k+1} - \bm{x}^* \| \leq c \| \bm{x}_{k} - \bm{x}^* \|^2, \label{cor:PhaseTwoConvergence.0} \\
    &E_1 \left( \bm{x}_{k+1}, \bm{\lambda}_{k+1}, \bm{\mu}_{k+1} \right) \leq c E_1 \left( \bm{x}_k, \bm{\lambda}_k, \bm{\mu}_k \right)^2. \label{cor:PhaseTwoConvergence.1}
\end{align}
\end{corollary}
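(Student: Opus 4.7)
The plan is to treat each phase-two outer iteration as a single call to the LS algorithm and to chain the two local convergence theorems already in place: Theorem~\ref{NewtonYSchemeThm} for the constraint step that produces the intermediate point $\bm{w}$, followed by Theorem~\ref{ConvergenceResult} for the multiplier step that produces $(\bm{z}, \bm{\nu}, \bm{\eta})$. I will argue by induction on $k$, maintaining the hypothesis that the full primal-dual iterate $(\bm{x}_k, \bm{\lambda}_k, \bm{\mu}_k)$ lies in a neighborhood of $(\bm{x}^*, \bm{\lambda}^*, \bm{\mu}^*)$ small enough for each of Theorem~\ref{NewtonYSchemeThm}, Theorem~\ref{ConvergenceResult}, and Theorem~\ref{thm:E1-error-bound} to apply, and also strong enough to guarantee that the phase-two branching test $E_1(\bm{z}, \bm{\nu}, \bm{\eta}) \leq \theta E_1(\bm{x}_k, \bm{\lambda}_k, \bm{\mu}_k)$ is met at every iteration, so that $(\bm{x}_{k+1}, \bm{\lambda}_{k+1}, \bm{\mu}_{k+1})$ is in fact $(\bm{z}, \bm{\nu}, \bm{\eta})$ rather than control reverting to phase one.

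For the constraint step, I apply Theorem~\ref{NewtonYSchemeThm} with $\bm{w}_0 = \bm{x}_k$. Using (\ref{NewtonYSchemeThm.1}) together with the Lipschitz continuity of $\bm{h}$ and $\bm{h}(\bm{x}^*) = \bm{0}$, I get $\|\bm{w} - \bm{x}^*\| \leq \|\bm{w} - \bm{x}_k\| + \|\bm{x}_k - \bm{x}^*\| \leq c\|\bm{x}_k - \bm{x}^*\|$. For the residual $\|\bm{h}(\bm{w})\|$, the quadratic decrease (\ref{NewtonYSchemeThm.3}) combined with the termination rule $E_c(\bm{w}) \leq \theta E_{m,1}(\bm{x}_k, \bm{\lambda}_k, \bm{\mu}_k)$ yields $\|\bm{h}(\bm{w})\| \leq c\|\bm{x}_k - \bm{x}^*\|^2$; in the branch where the constraint loop executes at least once this follows directly from (\ref{NewtonYSchemeThm.3}), and in the branch where it terminates immediately it follows from the inductive bound $E_{m,1}(\bm{x}_k, \bm{\lambda}_k, \bm{\mu}_k) \leq c\|\bm{x}_k - \bm{x}^*\|^2$ supplied by (\ref{ConvergenceResult.5}) at the previous outer iteration.

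I then apply Theorem~\ref{ConvergenceResult} to the multiplier step with $\bm{z}_0 = \bm{w}$. Inequality (\ref{ConvergenceResult.3}) combined with the two displays above gives
\[
\|\bm{z} - \bm{x}^*\| \leq c\|\bm{w} - \bm{x}^*\|^2 + c\|\bm{h}(\bm{w})\| \leq c\|\bm{x}_k - \bm{x}^*\|^2,
\]
which, once I verify $\bm{x}_{k+1} = \bm{z}$, is exactly (\ref{cor:PhaseTwoConvergence.0}). For (\ref{cor:PhaseTwoConvergence.1}), inequality (\ref{ConvergenceResult.5}) yields $E_1(\bm{z}, \bm{\nu}, \bm{\eta}) \leq c\|\bm{z} - \bm{x}^*\|$, while Theorem~\ref{thm:E1-error-bound} gives $\|\bm{x}_k - \bm{x}^*\| \leq cE_1(\bm{x}_k, \bm{\lambda}_k, \bm{\mu}_k)$. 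Chaining,
\[
E_1(\bm{z}, \bm{\nu}, \bm{\eta}) \leq c\|\bm{z} - \bm{x}^*\| \leq c\|\bm{x}_k - \bm{x}^*\|^2 \leq cE_1(\bm{x}_k, \bm{\lambda}_k, \bm{\mu}_k)^2,
\]
which, after shrinking $\mathcal{N}_1$ so that $cE_1(\bm{x}_k, \bm{\lambda}_k, \bm{\mu}_k) \leq \theta$ on $\mathcal{N}_1$, also confirms that the branching criterion holds and closes the induction on $\bm{x}_k$.

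The main obstacle is the bookkeeping on the \emph{dual} iterate, since Theorem~\ref{ConvergenceResult} and Theorem~\ref{thm:E1-error-bound} both implicitly require $(\bm{\lambda}_k, \bm{\mu}_k)$ to lie near $(\bm{\lambda}^*, \bm{\mu}^*)$, while the corollary prescribes only that $\bm{x}_0 \in \mathcal{N}_1 \cap \Omega$. Closing this requires observing that after one LS step the multipliers inherit the primal convergence rate through (\ref{ConvergenceResult.5}) and the error bound, so that $\|(\bm{\lambda}_k, \bm{\mu}_k) - (\hat{\bm{\lambda}}_k, \hat{\bm{\mu}}_k)\| \leq cE_1(\bm{x}_k, \bm{\lambda}_k, \bm{\mu}_k) \leq c\|\bm{x}_k - \bm{x}^*\|$, keeping the dual iterate in a shrinking neighborhood of the KKT multiplier set. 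The remaining minor subtlety is the zero-iteration case of the constraint step mentioned above, which is handled by the same inductive $E_{m,1}$ estimate.
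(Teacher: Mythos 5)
Your proposal is correct and follows essentially the same route as the paper: chain (\ref{NewtonYSchemeThm.1}) and (\ref{NewtonYSchemeThm.3}) for the constraint step with (\ref{ConvergenceResult.3}) for the multiplier step to get the primal bound, then combine (\ref{ConvergenceResult.5}) with Theorem~\ref{thm:E1-error-bound} for the error-estimator bound. The extra care you take with the induction on the neighborhood, the branching criterion, and the zero-iteration case of the constraint loop goes beyond what the paper writes out explicitly, but it does not change the argument.
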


\begin{proof}
The proof of (\ref{cor:PhaseTwoConvergence.0}) follows from applying Theorems~\ref{NewtonYSchemeThm} and~\ref{ConvergenceResult}. Applying (\ref{ConvergenceResult.3}), (\ref{NewtonYSchemeThm.3}), and (\ref{NewtonYSchemeThm.1}) together with the triangle and Cauchy inequalities yields %Observe that
\begin{align}
\| \bm{x}_{k+1} - \bm{x}^* \| 
&\leq c \| \bm{w} - \bm{x}^* \|^2 + c \| \bm{h} (\bm{w}) \| \\ %\tag{Applying \ref{ConvergenceResult.3}} \\
&\leq c \| \bm{w} - \bm{x}^* \|^2 + c \| \bm{h} (\bm{x}_{k}) \|^2 \\ %\tag{Applying \ref{NewtonYSchemeThm.3}} \\
&\leq c \left( \| \bm{w} - \bm{x}_k \| + \| \bm{x}_k - \bm{x}^* \| \right)^2 + c \| \bm{h} (\bm{x}_{k}) \|^2 \\ %\tag{Using triangle inequality} \\
&\leq c \left( \| \bm{h} (\bm{x}_k) \| + \| \bm{x}_k - \bm{x}^* \| \right)^2 + c \| \bm{h} (\bm{x}_{k}) \|^2 \\ %\tag{Applying \ref{NewtonYSchemeThm.1}} \\
&\leq \frac{5c}{2} \| \bm{h} (\bm{x}_k) \|^2 + \frac{3c}{2} \| \bm{x}_k - \bm{x}^* \|^2. %\tag{Using Cauchy inequality}
\end{align}
By performing a first-order Taylor expansion of $\bm{h} (\bm{x})$ and using the fact that $\bm{h} (\bm{x}^*) = \bm{0}$, there exists some $\bm{\xi}_k$ between $\bm{x}_k$ and $\bm{x}^*$ such that $\bm{h} (\bm{x}_k) = \nabla \bm{h} (\bm{\xi}_k) (\bm{x}_k - \bm{x}^*)$. Hence, $\| \bm{h} (\bm{x}_k) \| \leq c \| \bm{x}_k - \bm{x}^* \|$. Making this substitution into our previous inequality yields
\begin{align}
\| \bm{x}_{k+1} - \bm{x}^* \| 
&\leq \frac{5c^2}{2} \| \bm{x}_k - \bm{x}^* \|^2 + \frac{3c}{2} \| \bm{x}_k - \bm{x}^* \|^2 
= C \| \bm{x}_k - \bm{x}^* \|^2,
\end{align}
which proves (\ref{cor:PhaseTwoConvergence.0}).

To prove (\ref{cor:PhaseTwoConvergence.1}), we use (\ref{ConvergenceResult.5}), (\ref{cor:PhaseTwoConvergence.0}), and Theorem~\ref{thm:E1-error-bound}. Observe that
\begin{align}
E_1 \left( \bm{x}_{k+1}, \bm{\lambda}_{k+1}, \bm{\mu}_{k+1} \right) 
\leq C \| \bm{x}_{k+1} - \bm{x}^* \| %\tag{Applying \ref{ConvergenceResult.5}} \\
\leq C \| \bm{x}_k - \bm{x}^* \|^2 %\tag{Applying \ref{cor:PhaseTwoConvergence.0}} \\
\leq C E_1 \left( \bm{x}_k, \bm{\lambda}_k, \bm{\mu}_k \right)^2, %\tag{Applying Theorem~\ref{thm:E1-error-bound}}
\end{align}
which concludes the proof.
\end{proof}
\section{Convergence Analysis of NPASA} \label{section:npasa-global-convergence}
Now that we have established local convergence of phase two of NPASA we are ready to prove the local convergence result for NPASA. The statement of the main result requires two other assumptions that arise during the global convergence analysis of NPASA. These assumptions are considered in more detail in the phase one analysis of NPASA in the companion paper \cite{diffenderfer2020global} but for the sake of completeness we include these assumptions here with minimal details. First, note that these assumptions make use of the augmented Lagrangian function given by 
\begin{align}
\mathcal{L}_q (\bm{x}, \bm{\nu}) = f (\bm{x}) + \bm{\nu}^{\intercal} \bm{h}(\bm{x}) + q \| \bm{h}(\bm{x}) \|^2, \label{def:augmented-Lagrangian}
\end{align}
where $q \in \mathbb{R}$ is a penalty parameter. Additionally, given $\bm{u} \in \mathbb{R}^n$, $\bm{\nu} \in \mathbb{R}^s$, and $q$ we define the level set $\mathcal{S}(\bm{u}, \bm{\nu}, q) := \{ \bm{x} \in \Omega : \mathcal{L}_q (\bm{x}, \bm{\nu}) \leq \mathcal{L}_q (\bm{u}, \bm{\nu}) \}$. Lastly, we define $\mathcal{D}(\bm{u}, \bm{\nu}, q)$ to be the set of search directions generated by phase one of the algorithm PASA \cite{HagerActive} when solving the problem 
\begin{align}
\begin{array}{cc}
\displaystyle \min_{\bm{x}} & \displaystyle \mathcal{L}_q (\bm{x}, \bm{\nu}) \\
\text{s.t.} & \bm{x} \in \Omega
\end{array} \label{prob:aug-lag-intro}
\end{align}
starting at the initial guess $\bm{x} = \bm{u}$. Note that the definition of the search directions generated in phase one of PASA can be found in Algorithm 1 in Section 2 of \cite{HagerActive} and that problem (\ref{prob:aug-lag-intro}) is found in line 3 of the GS pseudocode, Algorithm~\ref{alg:gs}, in Section~\ref{section:npasa}. We are now ready to state the assumptions: 
\begin{enumerate}
\item[(G1)] Given $\bm{u} \in \mathbb{R}^n$, $\bm{\nu} \in \mathbb{R}^s$ and $q$, $\mathcal{L}_q (\bm{x}, \bm{\nu})$ is bounded from below on the level set $\mathcal{S} (\bm{u}, \bm{\nu})$ and $d_{\max} = \sup_{\bm{d} \in \mathcal{D}(\bm{u}, \bm{\nu}, q)} \| \bm{d} \| < \infty$,
\item[(G2)] Given $\bm{u} \in \mathbb{R}^n$ and $\bm{\nu} \in \mathbb{R}^s$, if $\overline{\mathcal{S}}(\bm{u}, \bm{\nu})$ is the collection of $\bm{x} \in \Omega$ whose distance to $\mathcal{S}(\bm{u}, \bm{\nu})$ is at most $d_{\max}$, then $\nabla_x \mathcal{L}_q (\bm{x}, \bm{\nu})$ is Lipschitz continuous on $\overline{\mathcal{S}}(\bm{u}, \bm{\nu})$.
\end{enumerate}

We now state the main result which establishes local quadratic convergence for the primal iterates and global error estimator for NPASA. Furthermore, this result establishes that only phase two of NPASA is executed after finitely many iterations.
\begin{theorem}[NPASA Local Convergence Theorem] \label{thm:npasa-local-conv}
Suppose that $\Omega$ is compact and that $f, \bm{h} \in \mathcal{C}^2 (\Omega)$. Suppose that NPASA (Algorithm~\ref{alg:npasa}) with $\varepsilon = 0$ generates a sequence $\{ (\bm{x}_k, \bm{\lambda}_k, \bm{\mu}_k) \}_{k=0}^{\infty}$ with $\bm{x}_k \in \Omega$ and let $\mathcal{S}_j$ be the set of indices such that if $k \in \mathcal{S}_j$ then $\bm{x}_k$ is generated in phase $j$ of NPASA, for $j \in \{ 1, 2 \}$. Suppose that the following assumptions hold:
\begin{itemize}
\item[\emph{(H1)}] For every $k \in \mathcal{S}_1$, assumptions (G1) and (G2) hold at $(\bm{x}_{k-1}, \bm{\nu}_{k-1}, q_k)$, where we have $\bm{\nu}_{k-1} = Proj_{[-\bar{\lambda},\bar{\lambda}]} (\bm{\lambda}_{k-1})$ and $\bar{\lambda} > 0$ is a scalar parameter.
\item[\emph{(H$2^*$)}] If $\bm{x}^*$ is a subsequential limit point of $\{ \bm{x}_k \}_{k=0}^{\infty}$ then (\textbf{LICQ}) holds at $\bm{x}^*$ and there exist KKT multipliers $\bm{\lambda}^*$ and $\bm{\mu}^*$ such that either (\textbf{SCS}) and (\textbf{SOSC}) hold at $(\bm{x}^*, \bm{\lambda}^*, \bm{\mu}^*)$ or (\textbf{SSOSC}) holds at $(\bm{x}^*, \bm{\lambda}^*, \bm{\mu}^*)$. 
\end{itemize}
Then there exists an integer $M$ such that if $k > M$ then $k \in \mathcal{S}_2$. Additionally, there exists a constant $c$ such that the following hold for all $k \geq M$:
\iffalse
\begin{itemize}
\item[(\ref{thm:npasa-local-conv}.0)] $\| \bm{x}_{k+1} - \bm{x}^* \| \leq c \| \bm{x}_{k} - \bm{x}^* \|^2$,
\item[(\ref{thm:npasa-local-conv}.1)] $E_1 \left( \bm{x}_{k+1}, \bm{\lambda}_{k+1}, \bm{\mu}_{k+1} \right) \leq c E_1 \left( \bm{x}_k, \bm{\lambda}_k, \bm{\mu}_k \right)^2$.
\end{itemize}
\fi
\begin{align}
&\| \bm{x}_{k+1} - \bm{x}^* \| \leq c \| \bm{x}_{k} - \bm{x}^* \|^2, \label{thm:npasa-local-conv.0} \\
&E_1 \left( \bm{x}_{k+1}, \bm{\lambda}_{k+1}, \bm{\mu}_{k+1} \right) \leq c E_1 \left( \bm{x}_k, \bm{\lambda}_k, \bm{\mu}_k \right)^2. \label{thm:npasa-local-conv.1}
\end{align}
\end{theorem}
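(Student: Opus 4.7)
The plan is to combine the global convergence machinery from the companion paper \cite{diffenderfer2020global} with the local quadratic convergence of phase two (Corollary~\ref{cor:PhaseTwoConvergence}), bridging the two via the error bound in Theorem~\ref{thm:E1-error-bound}. Compactness of $\Omega$ together with (H1) gives that NPASA is well-defined and $\{(\bm{x}_k,\bm{\lambda}_k,\bm{\mu}_k)\}$ has at least one subsequential limit point; by the global convergence result from \cite{diffenderfer2020global}, any such limit point $\bm{x}^*$ is a KKT point with multipliers $\bm{\lambda}^*,\bm{\mu}^*$. Assumption (H$2^*$) then activates the hypotheses of both Theorem~\ref{thm:E1-error-bound} and Corollary~\ref{cor:PhaseTwoConvergence} at $(\bm{x}^*,\bm{\lambda}^*,\bm{\mu}^*)$.

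Next, I would show that after finitely many iterations the algorithm is trapped in phase two. The first step is to argue the iterates themselves converge (not just subsequentially): along the phase-one iterates, the companion paper's analysis yields $e_k \downarrow 0$, and Theorem~\ref{thm:E1-error-bound} applied in a neighborhood of $\bm{x}^*$ turns $E_1 \to 0$ into convergence of the full sequence $(\bm{x}_k,\bm{\lambda}_k,\bm{\mu}_k) \to (\bm{x}^*,\hat{\bm{\lambda}},\hat{\bm{\mu}})$ for some KKT multipliers. The phase-one branching test is $E_{m,1}(\bm{x}_k,\bm{\lambda}_k,\bm{\mu}_k) \le \theta E_c(\bm{x}_{k-1})$; the key point is that the GS step performs an augmented Lagrangian minimization with penalty $q_k$ driven to $\infty$ by the update $q_k \gets \max\{\phi,e_{k-1}^{-1}\}q_{k-1}$, so the KKT residual of the augmented subproblem forces $E_{m,1}(\bm{x}_k,\bm{\lambda}_k,\bm{\mu}(\bm{x}_k,1)) = O(1/q_k)$ while $E_c(\bm{x}_{k-1})$ only shrinks like $1/q_{k-1}^2$ under the companion paper's analysis; therefore the ratio $E_{m,1}/E_c$ eventually drops below $\theta$, triggering the jump to phase two. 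Let $M_1$ be the first such index.

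Once in phase two from iterate $\bm{x}_{M_1}$, I would invoke Corollary~\ref{cor:PhaseTwoConvergence}: provided $\bm{x}_{M_1}$ lies in the neighborhood $\mathcal{N}_1$ of $\bm{x}^*$, the LS output $(\bm{z},\bm{\nu},\bm{\eta})$ satisfies $E_1(\bm{z},\bm{\nu},\bm{\eta}) \le c\, E_1(\bm{x}_{M_1},\bm{\lambda}_{M_1},\bm{\mu}_{M_1})^2$. Shrinking the neighborhood if necessary so that $c\, E_1(\bm{x}_{M_1},\bm{\lambda}_{M_1},\bm{\mu}_{M_1}) < \theta$, the phase-two branching test $E_1(\bm{z},\bm{\nu},\bm{\eta}) > \theta E_1(\bm{x}_k,\bm{\lambda}_k,\bm{\mu}_k)$ fails, the update $(\bm{x}_{k+1},\bm{\lambda}_{k+1},\bm{\mu}_{k+1}) \gets (\bm{z},\bm{\nu},\bm{\eta})$ is accepted, and $E_1$ continues to halve at a geometric-then-quadratic rate. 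Inductively this keeps all subsequent iterates inside $\mathcal{N}_1$, so for every $k \ge M := M_1$ only phase two is executed, and (\ref{thm:npasa-local-conv.0}) and (\ref{thm:npasa-local-conv.1}) follow directly from Corollary~\ref{cor:PhaseTwoConvergence}.

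The main obstacle is the second paragraph: making rigorous the claim that the phase-one branching criterion must eventually fire. This requires careful bookkeeping relating $E_{m,1}$ of the GS output to the penalty parameter $q_k$ and relating $E_c(\bm{x}_{k-1})$ to $1/q_{k-1}^2$, both of which live in the companion paper's analysis; the delicate point is ruling out the degenerate case $E_c(\bm{x}_{k-1}) = 0$ (which would actually already certify feasibility and send us to phase two trivially) and handling the interaction between the multiplier projection onto $[-\bar{\lambda},\bar{\lambda}]$ and the growth of the true KKT multipliers. Everything after the switch is essentially a corollary of Corollary~\ref{cor:PhaseTwoConvergence} plus a contraction argument on $E_1$.
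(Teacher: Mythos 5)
Your overall skeleton matches the paper's proof: locate an iterate in the phase-two convergence neighborhood with $E_1$ small, then use Corollary~\ref{cor:PhaseTwoConvergence} to show the phase-two branching test $E_1(\bm{z},\bm{\nu},\bm{\eta}) > \theta E_1(\bm{x}_k,\bm{\lambda}_k,\bm{\mu}_k)$ can never fire again, so the algorithm is trapped in phase two and (\ref{thm:npasa-local-conv.0})--(\ref{thm:npasa-local-conv.1}) drop out of the corollary. Your device of ensuring $c\,E_1 < \theta$ at the entry iterate is exactly the paper's choice of $M$ with $E_1(\bm{x}_M,\bm{\lambda}_M,\bm{\mu}_M) \le \theta/C$, which makes the induction self-sustaining since $E_1(\bm{x}_{M+1},\cdot) \le \theta^2/C < \theta/C$.

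The genuine gap is the one you flag yourself: the claim that the phase-one branching criterion $E_{m,1}(\bm{x}_k,\bm{\lambda}_k,\bm{\mu}_k) \le \theta E_c(\bm{x}_{k-1})$ eventually fires. The paper does not re-derive this; it invokes Theorem~1.1 of the companion paper \cite{diffenderfer2020global}, which under (H1) and (H$2^*$) directly supplies an index $M$ with $(\bm{x}_M,\bm{\lambda}_M,\bm{\mu}_M)$ in the required neighborhood, $E_1(\bm{x}_M,\bm{\lambda}_M,\bm{\mu}_M) \le \theta/C$, and the phase-one branch condition satisfied. Your attempted first-principles substitute is not only incomplete but, as sketched, points the wrong way: if $E_{m,1} = O(1/q_k)$ and $E_c(\bm{x}_{k-1}) = O(1/q_{k-1}^2)$, then the ratio $E_{m,1}/E_c = O(q_{k-1}^2/q_k)$, and since $q_k \ge \phi\, q_{k-1}$ this bound \emph{grows} with $k$ rather than falling below $\theta$; an upper bound on $E_c$ is in any case the wrong ingredient, since the test needs $E_{m,1}$ small \emph{relative to} the actual value of $E_c(\bm{x}_{k-1})$, not to an upper bound on it. So either cite the companion paper's global convergence theorem for the existence of $M$ (as the paper does), or replace the second paragraph with a correct quantitative comparison; as written it would not close.
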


\begin{proof}
Let $\mathcal{N}$ and $C$ be the neighborhood and constant given in Corollary~\ref{cor:PhaseTwoConvergence}, respectively. As the hypotheses of Theorem~1.1 from the companion paper \cite{diffenderfer2020global} are satisfied, there exists an integer $M$ such that $(\bm{x}_M, \bm{\lambda}_M, \bm{\mu}_M) \in \mathcal{N}$ and
\begin{align}
E_1 \left( \bm{x}_M, \bm{\lambda}_M, \bm{\mu}_M \right) \leq \frac{\theta}{C}. %\theta \min \left\{ 1, \frac{1}{C} \right\}. 
\label{eq:npasa-local-conv.1}
\end{align}
Suppose $M \in \mathcal{S}_1$. As the branching criterion for phase one are always satisfied under our hypothesis, we then branch to phase two of NPASA. If $M \in \mathcal{S}_2$, then we remain in branch two by definition of NPASA in Algorithm~\ref{alg:npasa}. Hence, after generating the point $(\bm{x}_M, \bm{\lambda}_M, \bm{\mu}_M)$ we are in phase two of NPASA. Now starting at the point $(\bm{x}_M, \bm{\lambda}_M, \bm{\mu}_M)$, phase two of NPASA generates an iterate which we will denote by $( \bm{x}_M', \bm{\lambda}_M', \bm{\mu}_M')$. Applying (\ref{cor:PhaseTwoConvergence.1}) yields that
\begin{align}
E_1 \left( \bm{x}_M', \bm{\lambda}_M', \bm{\mu}_M' \right)
&\leq C E_1 \left( \bm{x}_M, \bm{\lambda}_M, \bm{\mu}_M \right)^2. \label{eq:npasa-local-conv.2}
\end{align}
Now combining (\ref{eq:npasa-local-conv.1}) and (\ref{eq:npasa-local-conv.2}) it follows that
\begin{align}
E_1 \left( \bm{x}_M', \bm{\lambda}_M', \bm{\mu}_M' \right)
&\leq C \left( \frac{\theta}{C} \right) E_1 \left( \bm{x}_M, \bm{\lambda}_M, \bm{\mu}_M \right)
\leq \theta E_1 \left( \bm{x}_M, \bm{\lambda}_M, \bm{\mu}_M \right). \label{eq:npasa-local-conv.3}
\end{align}
%%%%%%%%%%%%%%%%%%%%% (Old; More complicated than necessary)
\iffalse
\begin{align}
E_1 \left( \bm{x}_M', \bm{\lambda}_M', \bm{\mu}_M' \right)
&\leq C \theta \min \left\{ 1, \frac{1}{C} \right\} E_1 \left( \bm{x}_M, \bm{\lambda}_M, \bm{\mu}_M \right)
\leq \theta E_1 \left( \bm{x}_M, \bm{\lambda}_M, \bm{\mu}_M \right). \label{eq:npasa-local-conv.3}
\end{align}
\fi
%%%%%%%%%%%%%%%%%%%%%%%%%%%%%%%%%%%%%%%%%%%%%%%%%%%
From (\ref{eq:npasa-local-conv.3}) it follows that we set $( \bm{x}_{M+1}, \bm{\lambda}_{M+1}, \bm{\mu}_{M+1} ) \gets ( \bm{x}_M', \bm{\lambda}_M', \bm{\mu}_M')$ so that $(M+1) \in \mathcal{S}_2$ and NPASA does not branch to phase one. Additionally, as $\theta \in (0,1)$ we note that (\ref{eq:npasa-local-conv.1}) and (\ref{eq:npasa-local-conv.2}) yield
\begin{align}
E_1 \left( \bm{x}_{M+1}, \bm{\lambda}_{M+1}, \bm{\mu}_{M+1} \right)
&\leq C \left( \frac{\theta}{C} \right)^2
= \frac{\theta^2}{C}
< \frac{\theta}{C}. \label{eq:npasa-local-conv.4}
\end{align}
Now it follows from (\ref{eq:npasa-local-conv.3}) and (\ref{eq:npasa-local-conv.4}) that $k \in \mathcal{S}_2$ for every $k > M$. Finally, (\ref{thm:npasa-local-conv.0}) and (\ref{thm:npasa-local-conv.1}) follow from (\ref{cor:PhaseTwoConvergence.0}) and (\ref{cor:PhaseTwoConvergence.1}), respectively.
\end{proof}

\section{Conclusion} \label{section:conclusion}
In this paper, we provided local convergence analysis for NPASA which supports the design of NPASA originally presented and motivated in the companion paper \cite{diffenderfer2020global}. Specifically, we proved that NPASA only exectues phase two after finitely many iterations and that the primal iterates and global error estimator for NPASA achieve a quadratic convergence rate in a neighborhood of a minimizer under certain sets of assumptions. Based on the sets of assumptions, this local convergence result holds for nondegenerate and degenerate minimizers of problem (\ref{prob:main-nlp}). To continue this research we plan to implement the NPASA algorithm in order to compare this method to other algorithms designed to solve nonlinear programs.

This work was performed, in part,  under the auspices of the U.S. Department of Energy by Lawrence Livermore National Laboratory under Contract DE-AC52-07NA27344.
%\input{acknowledgments}
% Add bibliography before appendix
\bibliographystyle{siamplain}
\bibliography{paper}
% Add appendix
\clearpage
\appendix
\section{Constrained Optimization Definitions and Results} \label{appendixa}
In this appendix, we highlight key definitions and results from the constrained optimization theory that were used as assumptions in our analysis of NPASA. Statements of these results are included here for completeness but we note that more details on these theorems and proofs can be found by referencing \cite{NandW, bertsekas1995nonlinear}. As the hypotheses of many of these results are used as assumptions in establishing global and local convergence properties for NPASA, these results are only referenced in Section~\ref{subsec:notation} where we provide simplified abbreviations for the hypotheses of these theorems. In this appendix, given $i \in \mathbb{N}$ note that we will write $[i]$ to denote the set of integers $\{ 1, 2, \ldots, i \}$.
\iffalse
\begin{definition} \label{def:li}
A vector $\bm{x}$ is said to satisfy the {\bfseries linear independence condition} for problem (\ref{prob:main-nlp}) if the matrix {\footnotesize $\begin{bmatrix} \nabla \bm{h} (\bm{x}) \\ \bm{A} \end{bmatrix}$} is of full row rank.
\end{definition}
\fi
\begin{definition} \label{def:licq}
A vector $\bm{x}$ is said to satisfy the {\bfseries linear independence constraint qualification condition} for problem (\ref{prob:main-nlp}) if the matrix {\footnotesize $\displaystyle \begin{bmatrix} \nabla \bm{h} (\bm{x}) \\ \bm{A}_{\mathcal{A}(\bm{x})} \end{bmatrix}$} is of full row rank.
\end{definition}

As in Section~\ref{subsec:notation}, note that if $\bm{x}$ satisfies the linear independence constraint qualification condition then we abbreviate this by writing (\textbf{LICQ}) holds at $\bm{x}$. We now state the first order optimality or KKT conditions.

\begin{theorem}[Karush-Kuhn-Tucker Conditions \cite{NandW}] \label{thm:kkt}
Suppose $\bm{x}^*$ is a local solution of problem (\ref{prob:main-nlp}), that $f, \bm{h} \in \mathcal{C}^1$, and (\textbf{LICQ}) holds at $\bm{x}^*$. Then there exist KKT multipliers $\bm{\lambda}^*$ and $\bm{\mu}^*$ such that the following conditions hold at $(\bm{x}^*, \bm{\lambda}^*, \bm{\mu}^*)$:
\begin{enumerate}[leftmargin=2\parindent,align=left,labelwidth=\parindent,labelsep=7pt]
	\item[(\textbf{KKT.1})] Gradient of Lagrangian equals zero: $\nabla_x \mathcal{L} (\bm{x}^*, \bm{\lambda}^*, \bm{\mu}^*)^{\intercal} = \bm{0}$
	\item[(\textbf{KKT.2})] Satisfies equality constraints: $\bm{h} (\bm{x}^*) = \bm{0}$
	\item[(\textbf{KKT.3})] Satisfies inequality constraints: $\bm{r} (\bm{x}^*) \leq \bm{0}$
	\item[(\textbf{KKT.4})] Nonnegativity of inequality multipliers: $\bm{\mu}^* \geq \bm{0}$
	\item[(\textbf{KKT.5})] Complementary slackness: $r_i (\bm{x}^*) \mu_i^* = 0$ for $i \in [m]$.
\end{enumerate}
\end{theorem}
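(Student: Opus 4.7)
The plan is to prove the classical KKT necessary conditions by the standard linearized-cone / Farkas' lemma route. I would first define the \emph{linearized feasible cone} at $\bm{x}^*$,
\begin{align*}
\mathcal{F}(\bm{x}^*) := \left\{ \bm{d} \in \mathbb{R}^n : \nabla \bm{h}(\bm{x}^*) \bm{d} = \bm{0}, \ \bm{a}_i^{\intercal} \bm{d} \leq 0 \text{ for all } i \in \mathcal{A}(\bm{x}^*) \right\},
\end{align*}
and then show two things: (i) every $\bm{d} \in \mathcal{F}(\bm{x}^*)$ is the derivative at $0$ of a $\mathcal{C}^1$ curve of feasible points, and (ii) this forces $\nabla f(\bm{x}^*) \bm{d} \geq 0$ for every such $\bm{d}$. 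An application of Farkas' lemma then yields the multipliers.

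First I would carry out the curve-construction step, which is where the assumption (\textbf{LICQ}) does the real work. Given $\bm{d} \in \mathcal{F}(\bm{x}^*)$, let $\mathcal{A} = \mathcal{A}(\bm{x}^*)$ and let $\bm{J}$ denote the full-row-rank matrix $[\nabla \bm{h}(\bm{x}^*)^{\intercal} \ \vert \ \bm{A}_{\mathcal{A}}^{\intercal}]^{\intercal}$ guaranteed by (\textbf{LICQ}). I would apply the implicit function theorem to the system
\begin{align*}
\bm{F}(t, \bm{u}) := \begin{bmatrix} \bm{h}\left( \bm{x}^* + t\bm{d} + \bm{J}^{\intercal} \bm{u} \right) \\ \bm{A}_{\mathcal{A}} \left( \bm{x}^* + t\bm{d} + \bm{J}^{\intercal} \bm{u} \right) - \bm{b}_{\mathcal{A}} \end{bmatrix} = \bm{0},
\end{align*}
noting that $\partial_{\bm{u}} \bm{F}(0,\bm{0}) = \bm{J} \bm{J}^{\intercal}$ is nonsingular. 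This produces a $\mathcal{C}^1$ solution $\bm{u}(t)$ with $\bm{u}(0) = \bm{0}$ and $\bm{u}'(0) = \bm{0}$, so the curve $\bm{x}(t) := \bm{x}^* + t\bm{d} + \bm{J}^{\intercal} \bm{u}(t)$ satisfies $\bm{x}(0) = \bm{x}^*$, $\bm{x}'(0) = \bm{d}$, $\bm{h}(\bm{x}(t)) = \bm{0}$, and $\bm{a}_i^{\intercal} \bm{x}(t) = b_i$ for $i \in \mathcal{A}$. Continuity of $\bm{r}$ then keeps the inactive constraints strictly inactive for small $t$, while $\bm{a}_i^{\intercal} \bm{d} \leq 0$ ensures the active ones remain $\leq 0$. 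Hence $\bm{x}(t)$ is feasible for all sufficiently small $t \geq 0$.

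Next, since $\bm{x}^*$ is a local minimizer, $f(\bm{x}(t)) \geq f(\bm{x}^*)$ for small $t \geq 0$, and differentiating at $t = 0$ gives $\nabla f(\bm{x}^*) \bm{d} \geq 0$ for every $\bm{d} \in \mathcal{F}(\bm{x}^*)$. Equivalently, $-\nabla f(\bm{x}^*)^{\intercal}$ lies in the polar of $\mathcal{F}(\bm{x}^*)$. Applying Farkas' lemma to the conic system defining $\mathcal{F}(\bm{x}^*)$ produces $\bm{\lambda}^* \in \mathbb{R}^{\ell}$ and a nonnegative vector $\bm{\mu}^*_{\mathcal{A}} \geq \bm{0}$ such that
\begin{align*}
\nabla f(\bm{x}^*)^{\intercal} + \nabla \bm{h}(\bm{x}^*)^{\intercal} \bm{\lambda}^* + \bm{A}_{\mathcal{A}}^{\intercal} \bm{\mu}^*_{\mathcal{A}} = \bm{0}.
\end{align*}

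Finally, I would extend $\bm{\mu}^*$ to all of $[m]$ by setting $\mu_i^* = 0$ for $i \notin \mathcal{A}$. This immediately yields (\textbf{KKT.1}) (using $\nabla \bm{r} = \bm{A}$), while (\textbf{KKT.4}) holds by construction. Conditions (\textbf{KKT.2}) and (\textbf{KKT.3}) are just the feasibility of $\bm{x}^*$, and (\textbf{KKT.5}) holds because the zero extension forces $\mu_i^* r_i(\bm{x}^*) = 0$ both for $i \in \mathcal{A}$ (where $r_i(\bm{x}^*) = 0$) and for $i \notin \mathcal{A}$ (where $\mu_i^* = 0$). The main obstacle is the implicit-function-theorem curve construction: it is where (\textbf{LICQ}) is indispensable, and without it one can only conclude the weaker Fritz John conditions.
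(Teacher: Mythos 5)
The paper does not actually prove this theorem: it is quoted from \cite{NandW} in Appendix~A as a standard result, so there is no in-paper argument to compare against and your proposal has to stand on its own. Your overall strategy (linearized cone, feasible curve, Farkas) is the standard one, and the Farkas step and the bookkeeping for (\textbf{KKT.2})--(\textbf{KKT.5}) are fine. The problem is in the curve construction.

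Your system $\bm{F}(t,\bm{u})=\bm{0}$ forces \emph{every} active inequality to remain active along the curve, i.e.\ $\bm{a}_i^{\intercal}\bm{x}(t)=b_i$ for all small $t$ and all $i\in\mathcal{A}$. Differentiating this identity at $t=0$ gives $\bm{a}_i^{\intercal}\bm{x}'(0)=0$, which contradicts $\bm{x}'(0)=\bm{d}$ whenever $\bm{a}_i^{\intercal}\bm{d}<0$. Concretely, the implicit function theorem gives $\bm{u}'(0)=-(\bm{J}\bm{J}^{\intercal})^{-1}\bm{J}\bm{d}$, and for $\bm{d}\in\mathcal{F}(\bm{x}^*)$ one has $\bm{J}\bm{d}=[\bm{0};\,\bm{A}_{\mathcal{A}}\bm{d}]$, which is nonzero unless $\bm{A}_{\mathcal{A}}\bm{d}=\bm{0}$; so your claim $\bm{u}'(0)=\bm{0}$ fails exactly for the directions that leave an active face. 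What your construction actually proves is $\nabla f(\bm{x}^*)\bm{d}\geq 0$ only for $\bm{d}$ in the null space of $\bm{J}$, a subspace; dualizing that yields existence of multipliers $\bm{\lambda}^*,\bm{\mu}^*_{\mathcal{A}}$ but with \emph{no sign constraint} on $\bm{\mu}^*_{\mathcal{A}}$. The nonnegativity in (\textbf{KKT.4}) is precisely the content carried by the directions with $\bm{a}_i^{\intercal}\bm{d}<0$, so the gap is material, not cosmetic.

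The standard repair (Lemma~12.2 of \cite{NandW}) is to replace the second block of $\bm{F}$ by $\bm{A}_{\mathcal{A}}\left(\bm{x}^*+t\bm{d}+\bm{J}^{\intercal}\bm{u}\right)-\bm{b}_{\mathcal{A}}-t\,\bm{A}_{\mathcal{A}}\bm{d}$ (and analogously subtract $t\,\nabla\bm{h}(\bm{x}^*)\bm{d}=\bm{0}$ from the first block). Then $\partial_t\bm{F}(0,\bm{0})=\bm{0}$, so $\bm{u}'(0)=\bm{0}$ and $\bm{x}'(0)=\bm{d}$ genuinely, while the curve satisfies $\bm{a}_i^{\intercal}\bm{x}(t)-b_i=t\,\bm{a}_i^{\intercal}\bm{d}\leq 0$ for $i\in\mathcal{A}$ rather than $=0$, which is all that feasibility requires. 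With that modification the rest of your argument goes through unchanged.
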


Before stating second order optimality conditions we first provide several definitions required for the statements of these theorems.

\begin{definition} \label{def:scs}
A point $(\bm{x}, \bm{\lambda}, \bm{\mu})$ is said to satisfy {\bfseries strict complementary slackness} for problem (\ref{prob:main-nlp}) if it satisfies (\textbf{KKT.1}) -- (\textbf{KKT.5}) and exactly one of $r_i (\bm{x})$ and $\mu_i$ is zero for each $i \in [m]$.
\end{definition}

\begin{definition} \label{def:critical-cone}
A point $\bm{w}$ is said to be in the {\bfseries critical cone} at the point $(\bm{x}^*, \bm{\lambda}^*, \bm{\mu}^*)$, denoted $\displaystyle \bm{w} \in \mathcal{C} (\bm{x}^*, \bm{\lambda}^*, \bm{\mu}^*)$, if and only if 
\begin{align}
\left\{
	\begin{array}{lll}
	\nabla h_i (\bm{x}^*)^{\intercal} \bm{w} = 0 & : & \text{for all} \ i \in [s] \vspace{0.8mm} \\
	\nabla r_i (\bm{x}^*)^{\intercal} \bm{w} = 0 & : & \text{for all} \ i \in \mathcal{A}(\bm{x}^*) \cap [m] \ \text{with} \ \mu_i^* > 0 \vspace{0.8mm} \\
    \nabla r_i (\bm{x}^*)^{\intercal} \bm{w} \leq 0 & : & \text{for all} \ i \in \mathcal{A}(\bm{x}^*) \cap [m] \ \text{with} \ \mu_i^* = 0
	\end{array}
\right.
\end{align}
%\{ \bm{w} \in \mathcal{F} (\bm{x}^*) \vert \nabla \bm{r}_i (\bm{x}^*)
\end{definition}

\begin{definition} \label{def:mod-critical-cone}
A point $\bm{w}$ is said to be in the {\bfseries modified critical cone} at the point $(\bm{x}^*, \bm{\lambda}^*, \bm{\mu}^*)$, denoted $\displaystyle \bm{w} \in \mathcal{C}' (\bm{x}^*, \bm{\lambda}^*, \bm{\mu}^*)$, if and only if 
\begin{align}
\left\{
	\begin{array}{lll}
	\nabla h_i (\bm{x}^*)^{\intercal} \bm{w} = 0 & : & \text{for all} \ i \in [s] \vspace{0.8mm} \\
	\nabla r_i (\bm{x}^*)^{\intercal} \bm{w} = 0 & : & \text{for all} \ i \in \mathcal{A}(\bm{x}^*) \cap [m] \ \text{with} \ \mu_i^* > 0
	\end{array}
	\right.
\end{align}
%\{ \bm{w} \in \mathcal{F} (\bm{x}^*) \vert \nabla \bm{r}_i (\bm{x}^*)
\end{definition}

Based on the definitions of the critical cone and the modified critical cone, note that $\mathcal{C} (\bm{x}^*, \bm{\lambda}^*, \bm{\mu}^*) \subset \mathcal{C}' (\bm{x}^*, \bm{\lambda}^*, \bm{\mu}^*)$. We now state two theorems that provide sufficient second order optimality conditions for problem (\ref{prob:main-nlp}).

\begin{theorem}[Second-Order Sufficient Optimality Conditions \cite{NandW}] \label{thm:sosc}
Suppose $f, \bm{h} \in \mathcal{C}^2(\Omega)$ and that for some feasible point $\bm{x}^*$ for problem (\ref{prob:main-nlp}) there are KKT multipliers $\bm{\lambda}^*$ and $\bm{\mu}^*$ such that (\textbf{KKT.1}) -- (\textbf{KKT.5}) hold for $(\bm{x}^*, \bm{\lambda}^*, \bm{\mu}^*)$. Additionally, suppose that 
\begin{align}
\bm{w}^T \nabla_{xx}^2 \mathcal{L} (\bm{x}^*, \bm{\lambda}^*, \bm{\mu}^*) \bm{w} > 0, \ \ \text{for all} \ \ \bm{w} \in \mathcal{C} (\bm{x}^*, \bm{\lambda}^*, \bm{\mu}^*) \setminus \{ \bm{0} \}
\end{align}
Then $\bm{x}^*$ is a strict local solution for problem (\ref{prob:main-nlp}).
\end{theorem}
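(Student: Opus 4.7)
The plan is a standard argument by contradiction. Suppose $\bm{x}^*$ is \emph{not} a strict local minimizer. Then there exists a sequence $\{\bm{x}_k\}$ of feasible points for problem (\ref{prob:main-nlp}) with $\bm{x}_k \neq \bm{x}^*$, $\bm{x}_k \to \bm{x}^*$, and $f(\bm{x}_k) \leq f(\bm{x}^*)$. Set $t_k := \|\bm{x}_k - \bm{x}^*\|$ and $\bm{w}_k := (\bm{x}_k - \bm{x}^*)/t_k$; since $\{\bm{w}_k\}$ lies on the unit sphere, pass to a subsequence so that $\bm{w}_k \to \bm{w}$ with $\|\bm{w}\|=1$. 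The first step will be to show $\bm{w} \in \mathcal{C}(\bm{x}^*, \bm{\lambda}^*, \bm{\mu}^*)$; the second step will be to Taylor-expand the Lagrangian at $\bm{x}^*$ to derive a contradiction with the positive-definiteness hypothesis.

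To show $\nabla \bm{h}(\bm{x}^*)\bm{w} = \bm{0}$, I would apply the mean value theorem to $\bm{h}(\bm{x}_k) - \bm{h}(\bm{x}^*) = \bm{0}$, divide by $t_k$, and pass to the limit using $\bm{h} \in \mathcal{C}^2$. For active indices $i \in \mathcal{A}(\bm{x}^*) \cap [m]$, linearity of $\bm{r}$ gives $\nabla r_i(\bm{x}^*)\bm{w}_k = r_i(\bm{x}_k)/t_k \leq 0$, so $\nabla r_i(\bm{x}^*)\bm{w} \leq 0$ in the limit. The remaining --- and more delicate --- condition is $\nabla r_i(\bm{x}^*)\bm{w} = 0$ whenever $\mu_i^* > 0$. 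Here I would use the Lagrangian identity: since $\bm{h}(\bm{x}_k) = \bm{0}$,
\begin{equation*}
\mathcal{L}(\bm{x}_k, \bm{\lambda}^*, \bm{\mu}^*) - \mathcal{L}(\bm{x}^*, \bm{\lambda}^*, \bm{\mu}^*)
= \bigl(f(\bm{x}_k) - f(\bm{x}^*)\bigr) + (\bm{\mu}^*)^{\intercal}\bm{r}(\bm{x}_k),
\end{equation*}
where I used (\textbf{KKT.2}), (\textbf{KKT.5}). Both terms on the right are nonpositive by assumption and by (\textbf{KKT.3})--(\textbf{KKT.4}). Dividing by $t_k$ and passing to the limit gives $\nabla f(\bm{x}^*)\bm{w} + \sum_i \mu_i^* \nabla r_i(\bm{x}^*)\bm{w} \leq 0$, with each summand nonpositive. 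But (\textbf{KKT.1}) combined with $\nabla \bm{h}(\bm{x}^*)\bm{w} = \bm{0}$ forces this sum to equal zero, so every summand vanishes; in particular $\mu_i^* > 0$ implies $\nabla r_i(\bm{x}^*)\bm{w} = 0$. Hence $\bm{w} \in \mathcal{C}(\bm{x}^*, \bm{\lambda}^*, \bm{\mu}^*)\setminus\{\bm{0}\}$.

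For the contradiction, Taylor-expand the Lagrangian in its first argument at $\bm{x}^*$ using $f, \bm{h} \in \mathcal{C}^2$ and the fact that $\bm{r}$ is linear (so $\nabla_{xx}^2 \mathcal{L}$ exists and is continuous near $\bm{x}^*$):
\begin{equation*}
\mathcal{L}(\bm{x}_k, \bm{\lambda}^*, \bm{\mu}^*)
= \mathcal{L}(\bm{x}^*, \bm{\lambda}^*, \bm{\mu}^*)
+ \nabla_x \mathcal{L}(\bm{x}^*, \bm{\lambda}^*, \bm{\mu}^*)(\bm{x}_k - \bm{x}^*)
+ \tfrac{1}{2}(\bm{x}_k - \bm{x}^*)^{\intercal}\nabla_{xx}^2 \mathcal{L}(\bm{x}^*, \bm{\lambda}^*, \bm{\mu}^*)(\bm{x}_k - \bm{x}^*) + o(t_k^2).
\end{equation*}
The gradient term drops by (\textbf{KKT.1}), while the left-hand side minus $\mathcal{L}(\bm{x}^*, \bm{\lambda}^*, \bm{\mu}^*)$ is $\leq 0$ by the displayed identity above. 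Dividing by $t_k^2$ and taking $k \to \infty$ yields $\bm{w}^{\intercal}\nabla_{xx}^2 \mathcal{L}(\bm{x}^*, \bm{\lambda}^*, \bm{\mu}^*)\bm{w} \leq 0$, contradicting the hypothesis. The main technical obstacle is cleanly extracting $\nabla r_i(\bm{x}^*)\bm{w} = 0$ for indices with $\mu_i^* > 0$; everything else is bookkeeping around the KKT identities and a standard quadratic Taylor expansion.
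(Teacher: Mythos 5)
The paper does not prove this theorem: it is quoted in Appendix~\ref{appendixa} as a standard result cited from \cite{NandW}, with the proof deferred to that reference. Your argument is correct and is essentially that standard proof (the contradiction argument of Theorem~12.6 in \cite{NandW}): extracting a limiting unit direction from a sequence of feasible points violating strict local optimality, placing it in the critical cone via the KKT identities and the nonpositivity of both $f(\bm{x}_k)-f(\bm{x}^*)$ and $(\bm{\mu}^*)^{\intercal}\bm{r}(\bm{x}_k)$, and contradicting the curvature hypothesis through a second-order Taylor expansion of the Lagrangian.
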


\begin{theorem}[Strong Second-Order Sufficient Optimality Conditions \cite{Robinson1980}] \label{thm:ssosc}
Suppose $f, \bm{h} \in \mathcal{C}^2(\Omega)$ and that for some feasible point $\bm{x}^*$ for problem (\ref{prob:main-nlp}) there are KKT multipliers $\bm{\lambda}^*$ and $\bm{\mu}^*$ such that (\textbf{KKT.1}) -- (\textbf{KKT.5}) hold for $(\bm{x}^*, \bm{\lambda}^*, \bm{\mu}^*)$. Additionally, suppose that 
\begin{align}
\bm{w}^T \nabla_{xx}^2 \mathcal{L} (\bm{x}^*, \bm{\lambda}^*, \bm{\mu}^*) \bm{w} > 0, \ \ \text{for all} \ \ \bm{w} \in \mathcal{C}' (\bm{x}^*, \bm{\lambda}^*, \bm{\mu}^*) \setminus \{ \bm{0} \}
\end{align}
Then $\bm{x}^*$ is a strict local solution for problem (\ref{prob:main-nlp}).
\end{theorem}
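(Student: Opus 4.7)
The plan is to observe that the hypothesis of Theorem~\ref{thm:ssosc} is strictly stronger than the hypothesis of Theorem~\ref{thm:sosc}, and then invoke Theorem~\ref{thm:sosc} to obtain the conclusion immediately. The key point is the set containment $\mathcal{C}(\bm{x}^*, \bm{\lambda}^*, \bm{\mu}^*) \subseteq \mathcal{C}'(\bm{x}^*, \bm{\lambda}^*, \bm{\mu}^*)$, which was already noted following Definition~\ref{def:mod-critical-cone}. Comparing Definitions~\ref{def:critical-cone} and~\ref{def:mod-critical-cone}, the critical cone $\mathcal{C}$ imposes the extra sign condition $\nabla r_i(\bm{x}^*)^{\intercal} \bm{w} \leq 0$ for each degenerate active index (those $i \in \mathcal{A}(\bm{x}^*) \cap [m]$ with $\mu_i^* = 0$), whereas $\mathcal{C}'$ simply drops this constraint. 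Consequently, every $\bm{w} \in \mathcal{C}$ automatically lies in $\mathcal{C}'$.

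Given this containment, the assumed positivity $\bm{w}^T \nabla_{xx}^2 \mathcal{L}(\bm{x}^*, \bm{\lambda}^*, \bm{\mu}^*) \bm{w} > 0$ for all $\bm{w} \in \mathcal{C}'(\bm{x}^*, \bm{\lambda}^*, \bm{\mu}^*) \setminus \{\bm{0}\}$ restricts to the corresponding positivity statement over $\mathcal{C}(\bm{x}^*, \bm{\lambda}^*, \bm{\mu}^*) \setminus \{\bm{0}\}$. Combined with the standing KKT conditions (\textbf{KKT.1})--(\textbf{KKT.5}) and the $\mathcal{C}^2$ regularity of $f$ and $\bm{h}$, all hypotheses of Theorem~\ref{thm:sosc} are verified at $(\bm{x}^*, \bm{\lambda}^*, \bm{\mu}^*)$, and its conclusion delivers that $\bm{x}^*$ is a strict local solution of problem~(\ref{prob:main-nlp}).

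There is no real obstacle to overcome: the statement is a direct corollary of Theorem~\ref{thm:sosc} via the inclusion $\mathcal{C} \subseteq \mathcal{C}'$. The reason for recording Theorem~\ref{thm:ssosc} as a separate result is that its hypothesis, paired with (\textbf{LICQ}), is precisely what is required in~\cite{Robinson1980} to establish strong regularity of the KKT system \emph{without} assuming strict complementarity. In the body of this paper, (\textbf{SSOSC}) surfaces in exactly this role: it serves as an alternative to the triple (\textbf{LICQ}), (\textbf{SCS}), (\textbf{SOSC}) in the hypotheses of Lemma~\ref{lem:Lem13.1}, thereby enabling the stability analysis that underlies the multiplier-step convergence and ultimately Theorem~\ref{thm:npasa-local-conv}.
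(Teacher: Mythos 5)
Your argument is correct, and it is worth noting at the outset that the paper itself supplies no proof of Theorem~\ref{thm:ssosc}: the result appears in Appendix~\ref{appendixa} as a standard theorem quoted from \cite{Robinson1980}, and the appendix explicitly defers all proofs to the cited references. Your reduction is therefore a genuinely different (and more self-contained) route than the paper's bare citation, and it is sound modulo Theorem~\ref{thm:sosc}, which the paper likewise takes as known. The key step checks out: comparing Definitions~\ref{def:critical-cone} and~\ref{def:mod-critical-cone}, the modified critical cone $\mathcal{C}'(\bm{x}^*, \bm{\lambda}^*, \bm{\mu}^*)$ is obtained from $\mathcal{C}(\bm{x}^*, \bm{\lambda}^*, \bm{\mu}^*)$ by dropping the sign constraints $\nabla r_i (\bm{x}^*)^{\intercal} \bm{w} \leq 0$ on the degenerate active indices, so $\mathcal{C}(\bm{x}^*, \bm{\lambda}^*, \bm{\mu}^*) \subseteq \mathcal{C}'(\bm{x}^*, \bm{\lambda}^*, \bm{\mu}^*)$ exactly as the paper notes after Definition~\ref{def:mod-critical-cone}; positivity of $\bm{w}^{\intercal} \nabla_{xx}^2 \mathcal{L}(\bm{x}^*, \bm{\lambda}^*, \bm{\mu}^*) \bm{w}$ on the larger set $\mathcal{C}' \setminus \{\bm{0}\}$ restricts to the smaller set $\mathcal{C} \setminus \{\bm{0}\}$, and all hypotheses of Theorem~\ref{thm:sosc} are then in force, yielding the conclusion. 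What your derivation buys is a clear exhibition of the logical relation (\textbf{SSOSC}) $\Rightarrow$ (\textbf{SOSC}) within the paper's own framework; what the citation buys the authors is access to the substantive content of \cite{Robinson1980}, namely that this stronger cone condition together with (\textbf{LICQ}) yields strong regularity of the KKT system without strict complementarity, which is the role (\textbf{SSOSC}) actually plays in Lemma~\ref{lem:Lem13.1} and downstream in Theorem~\ref{thm:npasa-local-conv} --- a point your closing remarks capture accurately.
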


\section{Constructing Inequality Multiplier in Global Step} \label{subsec:gs-inequality-multiplier}
%At this point, we have addressed nearly every step of phase one of NPASA. However, throughout the analysis in this section we have 
%Here, we outline how to construct the inequality multiplier $\bm{\mu} (\bm{x}_k, 1)$ required by the GS algorithm, Algorithm~\ref{alg:gs}, using values internally computed by PPROJ. 
We now discuss how to construct the inequality multiplier $\bm{\mu} (\bm{x}_k, 1)$ required by the GS algorithm, Algorithm~\ref{alg:gs}, by using information already computed internally by modified PASA (Algorithm~3.4 of the companion paper \cite{diffenderfer2020global}) when solving the minimization problem in Algorithm~\ref{alg:gs}.
%While the formula for $E_{m,0} (\bm{x}_k, \bm{\lambda}_k, \bm{\mu} (\bm{x}_k,1))$ provided in 
%(\ref{eq:aug-lag-mod.4.2}) 
%the companion paper \cite{diffenderfer2020global} proved useful in establishing convergence of the augmented Lagrangian problem phase one of NPASA and allows the user to compute $E_{m,0}$ without requiring the value of $\bm{\mu} (\bm{x}_k,1)$, to establish desirable global convergence properties for NPASA in Section~\ref{section:npasa-global-convergence} we need to compute $E_{m,1} (\bm{x}_k, \bm{\lambda}_k, \bm{\mu} (\bm{x}_k,1))$ at the end of phase one of NPASA. 
The vector $\bm{\mu} (\bm{x}_k, 1)$ is required to compute $E_1 (\bm{x}_k, \bm{\lambda}_k, \bm{\mu} (\bm{x}_k, 1))$ to update $e_k$ and check the branching and stopping criterion in NPASA. 

When checking the stopping criterion 
%for the original PASA algorithm \cite{HagerActive} or 
for modified PASA (Algorithm~3.4 of the companion paper \cite{diffenderfer2020global}) it is necessary to solve the problem %(\ref{prob:pasa-error}) with $\alpha = 1$, restated here as
\begin{align}
\begin{array}{cc}
\min & \displaystyle \frac{1}{2} \| \bm{x} - \bm{g} (\bm{x}) - \bm{y} \|^2 \vspace{2mm}\\
\text{s.t.} & \bm{y} \in \Omega'
\end{array} \label{prob:pasa-construct-mult}
\end{align}
where $\bm{x}$ is fixed and we let $\Omega'$ be of the general form $\Omega' = \{ \bm{y} : \bm{\ell} \leq \bm{A} \bm{y} \leq \bm{u}, \bm{lo} \leq \bm{y} \leq \bm{hi} \}$. 
In an implementation of NPASA, problem (\ref{prob:pasa-construct-mult}) can be solved using a dual technique with the algorithm PPROJ \cite{Hager2016}. From equation (1.5) in \cite{Hager2016} PPROJ computes a solution for the dual variable, which we will denote by $\bm{\pi}^*$, that is used to reconstruct the primal solution, $\bm{y} (\bm{\pi}^*)$. Letting $\bm{a}_i$ denote the $i$th column of $\bm{A}$ and $\bm{a}_j^{\intercal}$ denote the $j$th row of $\bm{A}$, using equation (1.5) in \cite{Hager2016} we have that this primal solution satisfies
\begin{align}
y_i (\bm{\pi}^*) = \left\{
	\begin{array}{lll}
	lo_i & : & x_i - g_i (\bm{x}) + \bm{a}_i^{\intercal} \bm{\pi}^* \leq lo_i \vspace{0.8mm} \\
	hi_i & : & x_i - g_i (\bm{x}) + \bm{a}_i^{\intercal} \bm{\pi}^* \geq hi_i \vspace{0.8mm} \\
    x_i - g_i (\bm{x}) + \bm{a}_i^{\intercal} \bm{\pi}^* & : & \text{otherwise}
	\end{array}
	\right. \label{eq:pasa-construct-mult.1}
\end{align}
and
\begin{align}
\bm{a}_j^{\intercal} \bm{y} (\bm{\pi}^*) = \left\{
	\begin{array}{lll}
	\ell_j & : & \pi_j^* > 0 \\
	u_j & : & \pi_j^* < 0
	\end{array}
	\right., \label{eq:pasa-construct-mult.2}
\end{align}
for $1 \leq i \leq n$ and $1 \leq j \leq m$. Now consider the Lagrangian for problem (\ref{prob:pasa-construct-mult}) given by
\begin{align}
L (\bm{y}, \bm{\gamma}_1, \bm{\gamma}_2, \bm{\upsilon}_1, \bm{\upsilon}_2) 
&= \frac{1}{2} \| \bm{x} - \bm{g} (\bm{x}) - \bm{y} \|^2 + \bm{\gamma}_1^{\intercal} (\bm{\ell} - \bm{A} \bm{y}) + \bm{\gamma}_2^{\intercal} (\bm{A} \bm{y} - \bm{u}) \label{eq:pasa-construct-mult.3} \\
&\quad + \bm{\upsilon}_1^{\intercal} (\bm{lo} - \bm{y}) + \bm{\upsilon}_2^{\intercal} ( \bm{y} - \bm{hi}). \nonumber
\end{align}
It follows that
\begin{align}
\nabla_y L (\bm{y}, \bm{\gamma}_1, \bm{\gamma}_2, \bm{\upsilon}_1, \bm{\upsilon}_2)^{\intercal} 
&= \bm{y} - (\bm{x} - \bm{g} (\bm{x}))  - \bm{A}^{\intercal} \bm{\gamma}_1 + \bm{A}^{\intercal} \bm{\gamma}_2 - \bm{\upsilon}_1 + \bm{\upsilon}_2. \label{eq:pasa-construct-mult.4}
\end{align}
%Letting $\bm{y}^* = \bm{y} (\bm{\pi}^*)$, we proceed by partitioning $\bm{y}^*$ into three parts, denoted $\bm{y}_{\mathcal{S}_1}^*$, $\bm{y}_{\mathcal{S}_2}^*$, and $\bm{y}_{\mathcal{S}_3}^*$, where the index sets used to construct the partition of $\bm{y}^*$ are defined by $\mathcal{S}_1 = \{ i \in \mathbb{N} : \bm{y}_i^* = lo_i \}$, $\mathcal{S}_2 = \{ i \in \mathbb{N} : \bm{y}_i^* = hi_i \}$, and $\mathcal{S}_3 = \{ i \in \mathbb{N} : lo_i < \bm{y}_i^* < hi_i \}$. 
Now let $\bm{y}^* = \bm{y} (\bm{\pi}^*)$ and define the index sets $\mathcal{S}_1 = \{ i \in \mathbb{N} : \bm{y}_i^* = lo_i \}$, $\mathcal{S}_2 = \{ i \in \mathbb{N} : \bm{y}_i^* = hi_i \}$, and $\mathcal{S}_3 = \{ i \in \mathbb{N} : lo_i < \bm{y}_i^* < hi_i \}$. Using $\bm{y}^*$, $\bm{\pi}^*$, and the sets $\mathcal{S}_j$, for $j \in \{ 1, 2, 3 \}$, we work to construct multiplier vectors $\bm{\gamma}_1^*$, $\bm{\gamma}_2^*$, $\bm{\upsilon}_1^*$, and $\bm{\upsilon}_2^*$ that satisfy the KKT conditions for problem (\ref{prob:pasa-construct-mult}) given by:
\iffalse
\begin{enumerate}[leftmargin=2\parindent,align=left,labelwidth=\parindent,labelsep=7pt]
\item[(\ref{prob:pasa-construct-mult}.1)] Gradient of Lagrangian equals zero: $\nabla_y L (\bm{y}^*, \bm{\gamma}_1^*, \bm{\gamma}_2^*, \bm{\upsilon}_1^*, \bm{\upsilon}_2^*)^{\intercal} = \bm{0}$
\item[(\ref{prob:pasa-construct-mult}.2)] Satisfies problem constraints: $\bm{y}^* \in \Omega$
\item[(\ref{prob:pasa-construct-mult}.3)] Nonnegativity of inequality multipliers: $\bm{\gamma}_1^*, \bm{\gamma}_2^* \in \mathbb{R}_{\geq 0}^{m}$ and $\bm{\upsilon}_1^*, \bm{\upsilon}_2^* \in \mathbb{R}_{\geq 0}^{n}$
\item[(\ref{prob:pasa-construct-mult}.4)] Complementary slackness: $(\bm{\ell} - \bm{A} \bm{y}^*)^{\intercal} \bm{\gamma}_1^* = 0$, $(\bm{A} \bm{y}^* - \bm{u})^{\intercal} \bm{\gamma}_2^* = 0$, $(\bm{lo} - \bm{y}^*)^{\intercal} \bm{\upsilon}_1^* = 0$, and $(\bm{y}^* - \bm{hi})^{\intercal} \bm{\upsilon}_2^* = 0$.
\end{enumerate}
\fi
\begin{align}
&\text{Gradient of Lagrangian equals zero:} \ \nabla_y L (\bm{y}^*, \bm{\gamma}_1^*, \bm{\gamma}_2^*, \bm{\upsilon}_1^*, \bm{\upsilon}_2^*)^{\intercal} = \bm{0}, \label{prob:pasa-construct-mult.1} \\ 
&\text{Satisfies problem constraints:} \ \bm{y}^* \in \Omega, \label{prob:pasa-construct-mult.2} \\
&\text{Nonnegativity of inequality multipliers:} \ \bm{\gamma}_1^*, \bm{\gamma}_2^* \in \mathbb{R}_{\geq 0}^{m} \ \text{and} \ \bm{\upsilon}_1^*, \bm{\upsilon}_2^* \in \mathbb{R}_{\geq 0}^{n}, \label{prob:pasa-construct-mult.3} \\
&\text{Compl. slackness:} \ (\bm{\ell} - \bm{A} \bm{y}^*)^{\intercal} \bm{\gamma}_1^* = 0, (\bm{A} \bm{y}^* - \bm{u})^{\intercal} \bm{\gamma}_2^* = 0, (\bm{lo} - \bm{y}^*)^{\intercal} \bm{\upsilon}_1^* = 0, (\bm{y}^* - \bm{hi})^{\intercal} \bm{\upsilon}_2^* = 0. \label{prob:pasa-construct-mult.4}
\end{align}
Note that (\ref{prob:pasa-construct-mult.2}) is already satisfied. First, we define the components of vectors $\bm{\gamma}_1^*$ and $\bm{\gamma}_2^*$ by
\begin{align}
\gamma_{1,j}^* := \left\{
	\begin{array}{cll}
	\pi_j^* & : & \pi_j^* > 0 \\
	0 & : & \text{otherwise}
	\end{array}
	\right. \label{eq:pasa-construct-mult.5}
\end{align}
and
\begin{align}
\gamma_{2,j}^* := \left\{
	\begin{array}{cll}
	-\pi_j^* & : & \pi_j^* < 0 \\
	0 & : & \text{otherwise}
	\end{array}
	\right., \label{eq:pasa-construct-mult.6}
\end{align}
respectively, for $1 \leq j \leq m$. By definition, we have that $\bm{\gamma}_1^*, \bm{\gamma}_2^* \in \mathbb{R}_{\geq 0}^{m}$. Also, it follows from definition that
\begin{align}
\bm{\gamma}_1^* - \bm{\gamma}_2^* 
= \bm{\pi}^*. \label{eq:pasa-construct-mult.6.1}
\end{align}
Additionally, combining (\ref{eq:pasa-construct-mult.2}), (\ref{eq:pasa-construct-mult.5}), and (\ref{eq:pasa-construct-mult.6}) it follows that $(\bm{\ell} - \bm{A} \bm{y}^*)^{\intercal} \bm{\gamma}_1^* = 0$ and $(\bm{A} \bm{y}^* - \bm{u})^{\intercal} \bm{\gamma}_2^* = 0$. Next, we define vectors $\bm{\upsilon}_1^*$ and $\bm{\upsilon}_2^*$ componentwise by
\begin{align}
\upsilon_{1,i}^* := \left\{
	\begin{array}{lll}
	lo_i - (x_i - \bm{g}_i (\bm{x})) - \bm{a}_i^{\intercal} \bm{\pi}^* & : & i \in \mathcal{S}_1 \\
	0 & : & i \in \mathcal{S}_2 \cup \mathcal{S}_3
	\end{array}
	\right. \label{eq:pasa-construct-mult.7}
\end{align}
and
\begin{align}
\upsilon_{2,i}^* := \left\{
	\begin{array}{lll}
	-hi_i + x_i - \bm{g}_i (\bm{x}) + \bm{a}_i^{\intercal} \bm{\pi}^* & : & i \in \mathcal{S}_2 \\
	0 & : & i \in \mathcal{S}_1 \cup \mathcal{S}_3
	\end{array}
	\right., \label{eq:pasa-construct-mult.8}
\end{align}
respectively. By the definition of $\mathcal{S}_1$ and (\ref{eq:pasa-construct-mult.1}), we have that
\begin{align}
lo_i - (x_i - \bm{g}_i (\bm{x})) - \bm{a}_i^{\intercal} \bm{\pi}^* \geq 0, \quad \text{for all $i \in \mathcal{S}_1$.} \label{eq:pasa-construct-mult.9}
\end{align}
As $\upsilon_{1,i}^* = 0$ for $i \in \mathcal{S}_2 \cup \mathcal{S}_3$ it now follows that $\bm{\upsilon}_1^* \in \mathbb{R}_{\geq 0}^{n}$. Similarly, by the definition of $\mathcal{S}_2$ and (\ref{eq:pasa-construct-mult.1}) it follows that
\begin{align}
-hi_i + (x_i - \bm{g}_i (\bm{x})) + \bm{a}_i^{\intercal} \bm{\pi}^* \geq 0, \quad \text{for all $i \in \mathcal{S}_2$.} \label{eq:pasa-construct-mult.10}
\end{align}
As $\upsilon_{2,i}^* = 0$ for $i \in \mathcal{S}_1 \cup \mathcal{S}_3$ it now follows that $\bm{\upsilon}_2^* \in \mathbb{R}_{\geq 0}^{n}$. Hence, (\ref{prob:pasa-construct-mult.3}) holds. Next, combining (\ref{eq:pasa-construct-mult.2}), (\ref{eq:pasa-construct-mult.9}), and (\ref{eq:pasa-construct-mult.10}) it follows that $(\bm{lo} - \bm{y}^*)^{\intercal} \bm{\upsilon}_1^* = 0$ and $(\bm{y}^* - \bm{hi})^{\intercal} \bm{\upsilon}_2^* = 0$. Hence, (\ref{prob:pasa-construct-mult.4}) holds. Lastly, it remains to show that (\ref{prob:pasa-construct-mult.1}) holds. Combining (\ref{eq:pasa-construct-mult.4}) and (\ref{eq:pasa-construct-mult.6.1}) we have that
\begin{align}
\nabla_y L (\bm{y}^*, \bm{\gamma}_1^*, \bm{\gamma}_2^*, \bm{\upsilon}_1^*, \bm{\upsilon}_2^*)^{\intercal} 
&= \bm{y}^* - (\bm{x} - \bm{g} (\bm{x}))  - \bm{A}^{\intercal} \bm{\pi}^* - \bm{\upsilon}_1^* + \bm{\upsilon}_2^*. \label{eq:pasa-construct-mult.11}
\end{align}
To show that (\ref{prob:pasa-construct-mult.1}) holds, we need to show that
\begin{align}
\bm{y}_i^* - (x_i - \bm{g}_i (\bm{x}))  - \bm{a}_i^{\intercal} \bm{\pi}^* - \upsilon_{1,i}^* + \upsilon_{2,i}^*
&= 0. \label{eq:pasa-construct-mult.12}
\end{align}
for $1 \leq i \leq n$. Accordingly, fix $i \in \{1, 2, \ldots, n \}$. Suppose $i \in \mathcal{S}_1$. Then using (\ref{eq:pasa-construct-mult.1}), (\ref{eq:pasa-construct-mult.9}), and (\ref{eq:pasa-construct-mult.10}) we have that $\bm{y}_i^* = lo_i$, $\upsilon_{1,i}^* = lo_i - (x_i - \bm{g}_i (\bm{x})) - \bm{a}_i^{\intercal} \bm{\pi}^*$, and $\upsilon_{2,i}^* = 0$, respectively. Hence, (\ref{eq:pasa-construct-mult.12}) holds for $i \in \mathcal{S}_1$. Next suppose $i \in \mathcal{S}_2$. Then using (\ref{eq:pasa-construct-mult.1}), (\ref{eq:pasa-construct-mult.9}), and (\ref{eq:pasa-construct-mult.10}) we have that $\bm{y}_i^* = hi_i$, $\upsilon_{1,i}^* = 0$, and $\upsilon_{2,i}^* = -hi_i + x_i - \bm{g}_i (\bm{x}) + \bm{a}_i^{\intercal} \bm{\pi}^*$, respectively. Hence, (\ref{eq:pasa-construct-mult.12}) holds for $i \in \mathcal{S}_2$. Lastly, suppose $i \in \mathcal{S}_3$. Then using (\ref{eq:pasa-construct-mult.1}), (\ref{eq:pasa-construct-mult.9}), and (\ref{eq:pasa-construct-mult.10}) we have that $\bm{y}_i^* = x_i - \bm{g}_i (\bm{x}) + \bm{a}_i^{\intercal} \bm{\pi}^*$, $\upsilon_{1,i}^* = 0$, and $\upsilon_{2,i}^* = 0$, respectively. Hence, (\ref{eq:pasa-construct-mult.12}) holds for $i \in \mathcal{S}_3$. As $\mathcal{S}_1 \cup \mathcal{S}_2 \cup \mathcal{S}_3 = \{ 1, 2, \ldots, n \}$, we conclude that (\ref{prob:pasa-construct-mult.1}) holds. Thus, $(\bm{y}^*, \bm{\gamma}_1^*, \bm{\gamma}_2^*, \bm{\upsilon}_1^*, \bm{\upsilon}_2^*)$ is a KKT point for problem (\ref{prob:pasa-construct-mult}). As our goal was to construct the inequality multiplier $\bm{\mu} (\bm{x}, 1)$, we now note that $\bm{y}^* = \bm{y} (\bm{x}^*, 1)$. Hence, when the polyhedral constraints for the original problem are in the general format
\begin{align}
\bm{r} (\bm{x}) = \begin{bmatrix}
\bm{\ell} - \bm{A} \bm{x} \\
\bm{A} \bm{x} - \bm{u} \\
\bm{lo} - \bm{x} \\
\bm{x} - \bm{hi}
\end{bmatrix}
\end{align}
then it follows that we should construct $\bm{\mu} (\bm{x}^*, 1)$ by
\begin{align}
\bm{\mu} (\bm{x}^*, 1) := \begin{bmatrix}
\bm{\gamma}_1^* \\ 
\bm{\gamma}_2^* \\
\bm{\upsilon}_1^* \\
\bm{\upsilon}_2^*
\end{bmatrix}. \label{eq:pasa-construct-mult.13}
\end{align}
As $\bm{y}^*$ and $\bm{\pi}^*$ are available upon completion of PPROJ, we can always perform this construction. 
%Note that this construction only needs to be performed once after terminating modified PASA (Algorithm~3.4 in companion paper \cite{diffenderfer2020global}) in phase one of Algorithm~\ref{alg:npasa} since we only need $\bm{\mu} (\bm{x}_k, 1)$ to compute $E_1 (\bm{x}_k, \bm{\lambda}_k, \bm{\mu} (\bm{x}_k, 1))$ at the end of phase one.

\end{document}